\chardef\bslash=`\\ 
\theoremstyle{plain} 
\newtheorem{plainnumberthm}{Theorem}\crefname{plainnumberthm}{theorem}{theorems}
\newtheorem{thm}{Theorem}[section]\crefname{thm}{theorem}{theorems}
\crefname{mdthm}{theorem}{theorems}
\newtheorem*{thm*}{Theorem}
\newtheorem{cor}[thm]{Corollary}\crefname{cor}{corollary}{corollaries}
\crefname{mdcor}{corollary}{corollaries}
\newtheorem*{cor*}{Corollary}
\newtheorem{lem}[thm]{Lemma}\crefname{lem}{lemma}{lemmas}
\crefname{mdlem}{lemma}{lemmas}
\newtheorem*{lem*}{Lemma}
\newtheorem{prop}[thm]{Proposition}\crefname{prop}{proposition}{propositions}
\crefname{mdprop}{proposition}{propositions}
\crefname{lemSublagrangian}{Sub-Lagrangian Reduction}{}
\newtheorem{lemFiltration}[thm]{Filtration Lemma}\crefname{lemFiltration}{the Filtration Lemma}{}
\crefname{lemGeneration}{the Generation Lemma}{}
\crefname{lemLineGeneration}{the Line Generation Lemma}{}
\theoremstyle{definition}
\newtheorem{defn}[thm]{Definition}\crefname{defn}{definition}{definitions}
\newtheorem*{defn*}{Definition}
\theoremstyle{remark}
\newtheorem{example}[thm]{Example}\crefname{example}{example}{examples}
\newtheorem*{example*}{Example}
\newtheorem{rem}[thm]{Remark}\crefname{rem}{remark}{remarks}
\newtheorem*{rem*}{Remark}
\newtheorem*{warning*}{Warning}
\newtheorem*{hilfslem*}{Lemma}
\renewcommand*{\AA}{\mathbb{A}}
\newcommand*{\ZZ}{\mathbb{Z}}
\newcommand*{\RR}{\mathbb{R}}
\newcommand*{\CC}{\mathbb{C}}
\newcommand*{\QQ}{\mathbb{Q}}
\newcommand*{\HH}{\mathbb{H}}
\newcommand*{\PP}{\mathbb{P}}
\newcommand*{\factor}[2]{\left.\raisebox{.1em}{\ensuremath{#1}}\middle/\raisebox{-.1em}{\ensuremath{#2}}\right.}
\providecommand*{\abs}[1]{\lvert#1\rvert}
\newcommand*{\eps}{\varepsilon}
\newcommand*{\red}{\hat}
\renewcommand*{\bar}{\overline}
\renewcommand*{\tilde}{\widetilde}
\renewcommand*{\setminus}{{-}}
\DeclareMathOperator{\characteristic}{char}
\DeclareMathOperator{\id}{id}
\DeclareMathOperator{\rank}{rank}
\DeclareMathOperator{\im}{im}
\DeclareMathOperator{\coker}{coker}
\DeclareMathOperator{\coeq}{coeq}
\DeclareMathOperator{\Spec}{Spec}
\newcommand*{\Rep}{\mathcal Rep}
\newcommand*{\scup}{\mathbin{\scalebox{0.5}{\ensuremath{\cup}}}}
\DeclareMathOperator{\graded}{gr}
\newcommand*{\gr}{\graded}
\newcommand*{\K}{\mathrm K}             
\newcommand*{\W}{\mathrm W}             
\newcommand*{\I}{\mathrm I}             
\newcommand*{\GW}{\mathrm{GW}}          
\newcommand*{\GI}{\mathrm{GI}}
\newcommand*{\KO}{{\mathrm{KO}}}
\newcommand{\CH}{{\mathrm{CH}}}
\newcommand{\Pic}{{\mathrm{Pic}}}
\newcommand{\GL}[1]{\mathrm{GL}_{#1}}
\newcommand{\OO}[1]{\mathrm{O}_{#1}}
\renewcommand*{\vec}[1]{\pmb{#1}}
\newcommand*{\cat}[1]{{\mathcal{#1}}}
\newcommand*{\lb}[1]{{\mathcal{#1}}}
\newcommand*{\vb}[1]{{\mathcal{#1}}}
\newcommand*{\sheaf}[1]{{\mathcal{#1}}}
\newcommand*{\dual}{\vee}
\newcommand*{\ur}{\mathit{ur}}
\newcommand*{\classical}{\mathit{clas}}
\renewcommand*{\top}{\mathit{top}}
\newcommand*{\et}{\mathit{et}}
\newcommand*{\nondeg}{\mathit{nd}}
\newcommand*{\gammaF}[1]{{F_{\gamma}^{#1}}}
\newcommand*{\topF}[1]{{F_{\top}^{#1}}}
\newcommand*{\clasF}[1]{{F_{\classical}^{#1}}}
\newcommand*{\urF}[1]{{F_{K}^{#1}}}
\newcommand*{\pretimes}{\mathbin{\hat\times}}
\newcommand*{\tensor}{\mathbin{\otimes}}
\newcommand*{\pretensor}{\mathbin{\hat\tensor}}
\newcommand*{\preLambda}{\smash{\hat\Lambda}}
\newcommand*{\prebigoplus}{\hat\bigoplus}
\newcommand{\sG}{{\sheaf G}}
\newcommand{\sS}{{\sheaf S}}
\renewcommand{\O}{{\sheaf O}}
\newcommand{\sym}{\text{sym}}
\newcommand{\ctext}[2]{\text{\pbox{#1}{\relax\ifvmode\centering\fi #2}}}
\newcommand{\cttext}[2]{\text{\pbox[t]{#1}{\relax\ifvmode\centering\fi #2}}}
\newcommand{\mm}[1]{\left(\begin{smallmatrix}#1\end{smallmatrix}\right)}
\newcolumntype{M}{>{$}c<{$}}
\newcommand*{\ie}{\mbox{i.\thinspace{}e.\ }}
\newcommand*{\eg}{\mbox{e.\thinspace{}g.\ }}
\newcommand*{\cf}{\mbox{c.\thinspace{}f.\ }}
\newcommand*{\define}[1]{\textbf{#1}}
\newcommand*{\GrothendieckWitt}{Grothen\-dieck-Witt }
\begin{document}
\title{The \(\gamma\)-filtration on the Witt ring of a scheme}
\author{Marcus Zibrowius}
\date{\today}
\maketitle
\begin{abstract}
\noindent
The K-ring of symmetric vector bundles over a scheme \(X\), the so-called Grothendieck-Witt ring of \(X\), can be endowed with the structure of a (special) \(\lambda\)-ring.  The associated \(\gamma\)-filtration generalizes the fundamental filtration on the (Grothendieck-)Witt ring of a field and is closely related to the ``classical'' filtration by the kernels of the first two Stiefel-Whitney classes.
\end{abstract}

\tableofcontents

\section*{Introduction}
In this article, we establish a (special) \(\lambda\)-ring structure on the Grothendieck-Witt ring of a scheme, and some basic properties of the associated \(\gamma\)-filtration.

As far as Witt rings of fields are concerned, there is an unchallenged natural candidate for a good filtration: the ``fundamental filtration'', given by powers of the ``fundamental ideal''.  Its claim to fame is that the associated graded ring is isomorphic to the mod-2 ^^e9tale cohomology ring, as predicted by Milnor \cite{Milnor} and verified by Voevodsky et al.\ \citelist{\cite{Voevodsky:Milnor}\cite{OVV:Milnor}}.  For the Witt ring of a more general variety \(X\), there is no candidate filtration of equal renown. The two most frequently encountered filtrations are:
\begin{compactitem}
\item A short filtration which we will refer to as the {\bf classical filtration}  \(\clasF{*}\W(X)\), given by the whole ring, the kernel of the rank homomorphism, the kernels of the first two Stiefel-Whitney classes. This filtration is used, for example, in \citelist{\cite{Fernandez}\cite{Me:WCS}}.
\item The {\bf unramified filtration} \(\urF{*}\W(X)\), given by the preimage of the fundamental filtration on the Witt ring of the function field \(K\) of \(X\) under the natural homomorphism \(\W(X)\to  \W(K)\).  Said morphism is not generally injective (\eg \cite{Totaro:Witt}), at least not when \(\dim(X) > 3\), and its kernel will clearly be contained in every piece of the filtration.  Recent computations with this filtration include \cite{FunkHoobler}.
\end{compactitem}
Clearly, the unramified filtration coincides with the fundamental filtration in the case of a field, and so does the classical filtration as far as it is defined.
The same will be true of the \(\gamma\)-filtration introduced here.  It may be thought of as an attempt to extend the classical filtration to higher degrees.

In general, in order to define a ``\(\gamma\)-filtration'', we simply need to exhibit a pre-\(\lambda\)-structure on the ring in question.  However, the natural candidates for \(\lambda\)-operations, the exterior powers, are not well-defined on the Witt ring \(\W(X)\).  We remedy this by passing to the \GrothendieckWitt ring \(\GW(X)\).  It is defined just like the Witt ring, except that we do not quotient out hyperbolic elements. Consequently, the two rings are related by an exact sequence
\[
\K(X) \to \GW(X) \to \W(X) \to 0.
\]
Given the (pre-)\(\lambda\)-structure on \(\GW(X)\), we can formulate the following theorem concerning the associated \(\gamma\)-filtration.  Let \(X\) be an integral scheme over a field \(k\) of characteristic not two.
\needspace{2cm}
\begin{plainnumberthm}\label{mainthm:filtration}\hfill
  \begin{compactenum}[(1)]
  \item The \(\gamma\)-filtration on \(\GW(k)\) is the  fundamental filtration.
  \item 
    The \(\gamma\)-filtration on \(\GW(X)\) is related to the classical filtration as follows:
    \begin{align*}
      \gammaF{1}\GW(X) &= \clasF{1}\GW(X) := \ker\left(\GW(X)\xrightarrow{\rank}\ZZ\right)\\
      \gammaF{2}\GW(X) &= \clasF{2}\GW(X) :=\ker\left(\clasF{1}\GW(X)\xrightarrow{w_1} H^1_\et(X,\ZZ/2)\right)\\
      \gammaF{3}\GW(X) &\subseteq \clasF{3}\GW(X) :=\ker\left(\clasF{2}\GW(X)\xrightarrow{w_2} H^2_\et(X,\ZZ/2)\right)
    \end{align*}
    However, the inclusion at the third step is not in general an equality.
  \item 
    The \(\gamma\)-filtration on \(\GW(X)\) is finer than the unramified filtration. 
  \end{compactenum}
\end{plainnumberthm}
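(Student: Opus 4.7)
I would treat the three parts in the order stated, using the Generation Lemma and Line Generation Lemma announced above to reduce statements about general elements of $\GW(X)$ to statements about line-bundle-type generators.

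\emph{Part (1).} Over a field, $\GW(k)$ is additively generated by the rank-one forms $\langle a \rangle$, each of which is a line element in the $\lambda$-ring sense; the standard relation between $\lambda$- and $\gamma$-operations immediately yields $\gamma^i(\langle a \rangle - 1) = 0$ for $i \geq 2$. Since $\gammaF{n}\GW(k)$ is, by construction, additively generated by products $\gamma^{i_1}(x_1) \cdots \gamma^{i_r}(x_r)$ with $x_j \in \gammaF{1}$ and $\sum i_j \geq n$, this computation identifies $\gammaF{n}\GW(k)$ with the additive subgroup generated by $n$-fold products $(\langle a_1 \rangle - 1) \cdots (\langle a_n \rangle - 1)$, which is the $n$-th power of the fundamental ideal.

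\emph{Part (2).} The equality $\gammaF{1} = \clasF{1}$ is immediate, as rank is the augmentation of the $\lambda$-ring structure on $\GW(X)$. For $\gammaF{2} = \clasF{2}$, the inclusion $\gammaF{2} \subseteq \ker(w_1)$ follows from the Whitney sum formula applied to the generators of $\gammaF{2}$, namely $\gamma^2$-classes and pairwise products of elements in $\gammaF{1}$. The reverse inclusion reduces, via the Line Generation Lemma, to verifying that a rank-one symmetric bundle $L$ with $w_1(L) = 0$ is equivalent modulo $\gammaF{2}$ to the trivial form, which follows from the standard identification of $w_1$ on line bundles with their classifying class in $H^1_\et(X, \ZZ/2)$. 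For $\gammaF{3} \subseteq \clasF{3}$, I would again verify vanishing of $w_2$ on the generators of $\gammaF{3}$ --- triple products and $\gamma^{i}$-classes for $i \geq 3$ --- using the Whitney sum formula. Strictness of this last inclusion must be shown by exhibiting an explicit variety carrying a class in $\clasF{3}\GW(X) \setminus \gammaF{3}\GW(X)$.

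\emph{Part (3).} This is a naturality statement: the $\gamma$-operations are preserved by any homomorphism of pre-$\lambda$-rings, hence so is the induced filtration. Applied to the pullback $\GW(X) \to \GW(K)$ along the generic point and combined with Part (1), this shows that $\gammaF{n}\GW(X)$ maps into $\gammaF{n}\GW(K) = \I^n\GW(K)$, which is precisely the defining condition for $\urF{n}$.

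The crux lies in Part (2). Parts (1) and (3) are essentially formal once the $\lambda$-structure is in place; what requires serious input is the low-degree comparison between $\gamma$-classes and Stiefel-Whitney classes, together with the construction of the strictness example at step three, which must demonstrate that the $\gamma$-filtration genuinely refines the information captured by $w_1$ and $w_2$.
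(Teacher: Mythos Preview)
Parts~(1) and~(3) are correct and match the paper's arguments essentially verbatim (\Cref{prop:local-filtration} and \Cref{comparison:finer-than-unramified}).

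Part~(2) has two genuine gaps.

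For the equality \(\gammaF{2}=\clasF{2}\): your reverse inclusion \(\ker(w_1)\subseteq\gammaF{2}\) invokes a ``Line Generation Lemma'' that is never actually stated in the paper (the theorem environment is declared in the preamble but unused). More importantly, any reduction of a general element of \(\gammaF{1}\GW(X)\) to line elements is precisely what fails for schemes: the paper devotes \Cref{sec:no splitting} to showing there is \emph{no} splitting principle for \(\GW\). The paper's actual argument (\Cref{comparison:F2}) bypasses this entirely by invoking the general \(\lambda\)-ring fact \(\gr^1_\gamma A\cong\{\text{line elements}\}\) (\Cref{lem:graded-degree-1}, i.e.\ Fulton--Lang III, Thm~1.7), which yields both inclusions at once. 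This fact, however, requires the full \(\lambda\)-structure of \Cref{mainthm}, not merely the pre-\(\lambda\)-structure; you have not flagged that dependence, and it is the real content of the step.

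For the inclusion \(\gammaF{3}\subseteq\clasF{3}\): the Whitney sum formula controls \(w_*(x+y)\), not \(w_*(xy)\) or \(w_*(\gamma^i x)\), so it does not by itself handle the generators you list. The paper's argument (\Cref{comparison:F2F3}) supplies the missing idea: an \emph{\'etale} splitting principle (\Cref{thm:etale-splitting}) lets one pass to a scheme, injectively on \(H^*_\et(-,\ZZ/2)\), over which the bundles in question split into symmetric line bundles. Then \Cref{lem:gamma-calculation} rewrites each \(\gamma^k\)-class as a linear combination of products \((u_1-1)\cdots(u_k-1)\) of reduced line elements, and \Cref{lem:w-calculation} (a generalization of Milnor's computation) shows \(w_i\) vanishes on such a product for all \(0<i<2^{k-1}\). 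The \'etale splitting step is what makes the reduction to line elements legitimate for Stiefel--Whitney classes even though it fails for \(\GW\) itself, and it is absent from your outline.
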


We define the ``\(\gamma\)-filtration'' on the Witt ring as the image of the above filtration under the canonical projection \(\GW(X)\to\W(X)\).
Thus, each of the above statements easily implies an analogous statement for the Witt ring:  the \(\gamma\)-filtration on the Witt ring of a field is the fundamental filtration, \(\gammaF{i}\W(X)\) agrees with \(\clasF{i}\W(X)\) for \(i<3\) etc.
The same example as for the \GrothendieckWitt ring (\Cref{eg:punctured-A^d}) will show that \(\gammaF{3}\W(X) \neq \clasF{3}\W(X)\) in general.

Most statements of \Cref{mainthm:filtration} also hold under weaker hypotheses---see {\it (1)} \Cref{prop:local-filtration}, {\it (2)} \Cref{comparison:F2,comparison:F2F3} and {\it (3)} \Cref{comparison:finer-than-unramified}. On the other hand, under some additional restrictions, the relation with the unramified filtration can be made more precise.  For example, if \(X\) is a regular variety of dimension at most three and \(k\) is infinite, the unramified filtration on the Witt ring agrees with the global sections of the sheafified \(\gamma\)-filtration (\Cref{sec:unramified-filtration}).

The crucial assertion is of course the equality of \(F^2_\gamma\GW(X)\) with the kernel of \(w_1\)---all other statements would hold similarly for the naive filtration of \(\GW(X)\) by the powers of the ``fundamental ideal'' \(F^1_\gamma\GW(X)\).  The equality follows from the fact that the exterior powers make \(\GW(X)\) not only a pre-\(\lambda\)-ring, but even a \(\lambda\)-ring:\footnote{
  In older terminology, pre-\(\lambda\)-rings are called ``\(\lambda\)-rings'', while \(\lambda\)-rings are referred to as ``special \(\lambda\)-rings''. See also the introduction to \cite{Me:LambdaReps}.}
\begin{plainnumberthm}\label{mainthm}
  For any scheme \(X\) over a field of characteristic not two, the exterior power operations give \(\GW(X)\) the structure of a \(\lambda\)-ring.
\end{plainnumberthm}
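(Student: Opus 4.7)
I would follow the standard splitting-principle strategy. A \emph{line element} in $\GW(X)$ is the class $[L, \phi]$ of a rank-one symmetric bundle; since $\Lambda^n L = 0$ for $n \geq 2$, such a class satisfies $\lambda_t[L, \phi] = 1 + t[L, \phi]$, and the two non-trivial $\lambda$-ring axioms — universality of $\lambda^n(xy)$ and of $\lambda^m(\lambda^n x)$ in the $\lambda^i x$ and $\lambda^j y$ — reduce to manipulations of formal series $\prod_i(1 + t \ell_i)$ whenever $x$ and $y$ are $\ZZ$-linear combinations of line elements. The job therefore splits into three parts: construct the pre-$\lambda$-ring structure on $\GW(X)$, establish a splitting principle reducing every element to a combination of line elements on a suitable cover, and check the axioms on line elements.

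The first part is essentially routine. Set $\lambda^n[V, \phi] := [\Lambda^n V, \Lambda^n \phi]$. The orthogonal decomposition $\Lambda^n(V \perp W) \cong \bigoplus_{i+j=n} \Lambda^i V \otimes \Lambda^j W$ of symmetric bundles shows that $\lambda_t \colon \GW(X) \to 1 + t\,\GW(X)[[t]]$ is additive with respect to orthogonal sums; one then has to verify that the metabolic/hyperbolic relations used in the definition of $\GW(X)$ are respected, so that $\lambda_t$ descends to a well-defined group homomorphism.

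The heart of the argument is the splitting principle: for each finite family of classes in $\GW(X)$, produce a morphism $f \colon Y \to X$ such that $f^*$ is an injective pre-$\lambda$-ring homomorphism and such that every class in the family becomes, in $\GW(Y)$, a $\ZZ$-linear combination of line elements. I would build $Y$ as an iterated orthogonal flag bundle, peeling off rank-one orthogonal summands one at a time via the Sub-Lagrangian Reduction, and completing the splitting by passing to a further finite \'etale cover that trivialises the discriminant and subsequent invariants of any remaining anisotropic residue. Injectivity at each stage should come from a projective-bundle (or orthogonal-Grassmannian-bundle) formula for $\GW$ analogous to the classical $K$-theoretic one. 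Granting such a $Y$, the universal polynomial identities that define the $\lambda$-ring axioms hold on the line-element combinations in $\GW(Y)$ and pull back to $X$ by injectivity of $f^*$.

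The main obstacle is the splitting principle itself, specifically the injectivity of $f^*$ and the control of the anisotropic residue at each stage. In $K$-theory both are classical consequences of the projective bundle formula; in the symmetric setting one needs the corresponding projective/Grassmannian-bundle formula for $\GW$ (a more delicate and more recent ingredient) together with a functorial way of handling forms that remain anisotropic over every stage of the tower. Presumably these inputs are supplied in the paper by the Line Generation Lemma and the Sub-Lagrangian Reduction machinery; once they are available, the final verification of the $\lambda$-ring axioms on line elements is entirely formal.
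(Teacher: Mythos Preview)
Your plan relies on a splitting principle for \(\GW\), and this is precisely where it breaks down. The paper devotes an entire section (\Cref{sec:no splitting}) to showing that no such principle exists: for \(e = H(\O(1)) \in \GW(\PP^2)\) one computes \(\psi^2(e) \neq 2\), whereas any element that is a \(\ZZ\)-linear combination of line elements in a real \(\lambda\)-ring must satisfy \(\psi^2(a) = \rank(a)\). Hence there is no extension \(\GW(\PP^2) \hookrightarrow A_e\) of real \(\lambda\)-rings in which \(e\) splits into line elements, and in particular no morphism \(f\colon Y \to X\) with \(f^*\) injective on \(\GW\) and \(f^*e\) a sum of symmetric line bundles. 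Your iterated orthogonal flag construction cannot succeed: the orthogonal complement to a non-degenerate line in a hyperbolic plane need not itself be non-degenerate, and more fundamentally the obstruction above is intrinsic to the \(\lambda\)-structure, independent of how one tries to build \(Y\). The ``Line Generation Lemma'' and ``Sub-Lagrangian Reduction'' you invoke do not appear in the paper and would not rescue the argument.

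The paper instead uses the \emph{Detection Criterion}: to verify the \(\lambda\)-ring axioms for a pair of positive elements \(e_1, e_2\), it suffices to find a \(\lambda\)-ring \(A'\) and a \(\lambda\)-morphism \(A' \to \GW(X)\) whose image contains both. No injectivity is required. Concretely, any symmetric bundle of rank \(n\) arises by twisting the standard representation of \(\OO{n}\) along an \'etale \(\OO{n}\)-torsor, so any pair of symmetric bundles lies in the image of a \(\lambda\)-morphism \(\GW(\Rep(\OO{n}\times\OO{m})) \to \GW(X)\). The source is known to be a \(\lambda\)-ring by the author's earlier work \cite{Me:LambdaReps}, and the \(\lambda\)-identities then transfer to \(\GW(X)\). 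This sidesteps the missing splitting principle entirely by pushing the problem to representation rings, where a splitting principle \emph{does} hold.
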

In the case when \(X\) is a field, this was established in \cite{McGarraghy:exterior}.  The underlying pre-\(\lambda\)-structure for affine \(X\) has also recently been established independently in \cite{Xie}, where it is used to study sums-of-squares formulas.

Although in this article the \(\lambda\)-structure is used mainly as a tool in proving \Cref{mainthm:filtration}, it should be noted that \(\lambda\)-rings and even pre-\(\lambda\)-rings have strong structural properties.  Much of the general structure of Witt rings of \emph{fields}---for example, the fact they contain no \(p\)-torsion for odd \(p\)---could be (re)derived using the pre-\(\lambda\)-structure on the \GrothendieckWitt ring.  Among the few results that generalize immediately to Grothendieck-Witt rings of schemes is the fact that torsion elements are nilpotent: this is true in any pre-\(\lambda\)-ring.  For \(\lambda\)-rings, Clauwens has even found a sharp bound on the nilpotence degree \cite{Clauwens}.  In our situation, Clauwens result reads:
\begin{cor*}
  Let \(X\) be as above. Suppose \(x\in\GW(X)\) is an element satisfying \(p^e x = 0\) for some prime \(p\) and some exponent \(e>0\).  Then \mbox{\(x^{p^e+p^{e-1}} = 0\)}.
\end{cor*}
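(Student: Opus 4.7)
The plan is to derive this corollary as a direct application of Clauwens' theorem \cite{Clauwens}, which in any $\lambda$-ring $R$ yields the sharp nilpotence bound: if $x \in R$ satisfies $p^e x = 0$ for some prime $p$ and some $e > 0$, then $x^{p^e + p^{e-1}} = 0$. To invoke this for $R = \GW(X)$, we need only that $\GW(X)$ carries a $\lambda$-ring structure, which is precisely the content of \Cref{mainthm}: the exterior power operations turn $\GW(X)$ into a (special) $\lambda$-ring for any scheme $X$ over a field of characteristic not two, i.e.\ exactly the standing setting of the corollary (inherited from \Cref{mainthm:filtration} via the phrase ``as above'').

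Thus the argument reduces to two invocations: first \Cref{mainthm} to equip $\GW(X)$ with its $\lambda$-ring structure, and then \cite{Clauwens} applied to the torsion element $x$. No intermediate computation is required, and the exponent $p^e + p^{e-1}$ appearing in the conclusion is simply the one produced by Clauwens' bound.

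The main (and essentially only) obstacle lies not in the corollary itself but in \Cref{mainthm}: it is crucial that we have a genuine $\lambda$-ring, not merely a pre-$\lambda$-ring, since Clauwens' sharp bound relies on the full set of $\lambda$-ring identities (the ``special'' axioms, in older terminology). The non-trivial fact that the exterior powers on symmetric vector bundles satisfy these stronger identities globally on $X$---extending \cite{McGarraghy:exterior} beyond the case of fields---is what permits the corollary to hold at the stated level of generality.
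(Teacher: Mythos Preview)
Your proposal is correct and matches the paper's own treatment exactly: the paper does not give a separate proof of this corollary but simply presents it as the specialization of Clauwens' nilpotence bound \cite{Clauwens} to the \(\lambda\)-ring \(\GW(X)\), with \Cref{mainthm} supplying the required \(\lambda\)-ring structure. One minor quibble: the phrase ``as above'' most naturally refers to the hypotheses of \Cref{mainthm} (any scheme over a field of characteristic not two), which immediately precedes the corollary, rather than to \Cref{mainthm:filtration}.
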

To put the corollary into context, recall that for a field \(k\) of characteristic not two,  an element \(x\in\GW(k)\) is nilpotent if and only if it is \(2^n\)-torsion for some \(n\) \citelist{\cite{Lam}*{VIII.8}\cite{Me:App}}.  This equivalence may be generalized at least to connected semi-local rings in which two is invertible, using the pre-\(\lambda\)-structure for one implication and \cite{KRW72}*{Ex.~3.11} for the other.  See \cite{McGarraghy:exterior} for further applications of the \(\lambda\)-ring structure on \GrothendieckWitt rings of fields and \cite{Balmer:Nilpotence} for nilpotence results for Witt rings of regular schemes.

From the \(\lambda\)-theoretic point of view, the main complication in the \GrothendieckWitt ring of a general scheme as opposed to that of a field is that not all generators can be written as sums of line elements. In K-theory, this difficulty can often be overcome by embedding \(\K(X)\) into the K-ring of some auxiliary scheme in which a given generator does have this property, but in our situation this is impossible:  there is no splitting principle for \GrothendieckWitt rings (\Cref{sec:no splitting}).

\subsection*{Acknowledgements}
I thank Pierre Guillot for getting me started on these questions, and Kirsten Wickelgren for making me aware of the above-mentioned nilpotence results.

\section{Generalities}
\subsection{$\lambda$-rings}\label{sec:general-lambda}
\newcommand{\symGroup}{\mathfrak S}
\newcommand{\symFunctions}{\mathbb W}
\newcommand{\ev}{\mathrm{ev}}
We give a quick and informal introduction to \(\lambda\)-rings, treading medium ground between the traditional definition in terms of exterior power operations \cite{SGA6}*{Expos^^e9~V}  and the abstract definition of \(\lambda\)-rings as coalgebras over a comonad \cite{Borger:BasicI}*{1.17}.  The main point we would like to get across is that a \(\lambda\)-ring is ``a ring equipped with all possible symmetric operations'', not just ``a ring with exterior powers''.  This observation is not essential for anything that follows---we will later work exclusively with the traditional definition---but we hope that it provides some intrinsic motivation for considering this kind of structure.

To make our statement more precise, let \(\symFunctions\) be the ring of symmetric functions.  That is, \(\symFunctions\) consists of all formal power series \(\phi(x_1,x_2,\dots)\) in countably many variables \(x_1,x_2,\dots\) with coefficients in \(\ZZ\) such that \(\phi\) has bounded degree and such that the image \(\phi(x_1,\dots,x_n,0,0,\dots)\) of \(\phi\) under the projection to \(\ZZ[x_1,\dots,x_n]\) is a symmetric polynomial for all \(n\). For example, \(\symFunctions\) contains~\dots
\begin{compactitem}[\dots]
  \item \eqparbox{egfunctions}{the {\bf elementary symmetric functions}} \(\lambda_k := \sum_{i_1<\dots<i_k} x_{i_1}\cdot\ldots\cdot x_{i_k}\),
  \item \eqparbox{egfunctions}{the {\bf complete symmetric functions}} \(\sigma_k := \sum_{i_1\leq\dots\leq i_k} x_{i_1}\cdot\ldots\cdot x_{i_k}\),
  \item \eqparbox{egfunctions}{the {\bf Adams symmetric functions}} \(\psi_k := \sum_{i} x_i^k\).
\end{compactitem}
The first two families of symmetric functions each define a set of algebraically independent generators of \(\symFunctions\) over \(\ZZ\), so they can be used to identify \(\symFunctions\) with a polynomial ring in countably many variables. Another, equivalent set of generators is given by the so-called  Witt symmetric functions (\cite{Borger:Positivity}*{4.5}).  The Adams symmetric functions are also algebraically independent, but they only generate \(\symFunctions\otimes_{\ZZ}\QQ\) over \(\QQ\).  In any case, we have no need to choose any specific set of generators just now.

Given a commutative ring \(A\), we write \(\symFunctions A\) for the {\bf universal \(\lambda\)-ring}\footnote{
  This ring is also known as the big Witt ring with coefficients in~\(A\), or as the big ring of Witt vectors over~\(A\).  We avoid this terminology here.  While the ``Witt'' in ``Witt vectors'' and the ``Witt'' in ``Witt ring of quadratic forms'' refer to the same Ernst Witt, these are otherwise fairly independent concepts.}
{\bf over \(A\)}.
As a set, it consists of all ring homomorphisms from \(\symFunctions\) to \(A\):
\[
\symFunctions A = \cat Rings(\symFunctions, A).
\]
In particular, for every symmetric function \(\phi\in\symFunctions\), we have an evaluation map \(\ev_\phi\colon \symFunctions A\to A\).  The universal \(\lambda\)-ring \(\symFunctions A\) becomes a ring via a coproduct \(\Delta^+\) and a comultiplication \(\Delta^\times\) on \(\symFunctions\).  (These cooperations are determined by the equations \(\Delta^+(\psi_n) = 1\otimes \psi_n + \psi_n\otimes 1\) and \(\Delta^\times(\psi_n) = \psi_n\otimes\psi_n\) for all~\(n\) \cite{Borger:Positivity}*{Introduction}.)

\begin{defn}
  A {\bf pre-\(\lambda\)-ring} is a commutative ring \(A\) together with a group homomorphism \(\theta_A\colon A\to \symFunctions A\) such that
  \[\xymatrix{
    {A} \ar@{=}[dr] \ar[r]^{\theta_A} & {\symFunctions A} \ar[d]^{\ev_{\psi_1}}\\
    & {A}
  }\]
  commutes. A morphism of pre-\(\lambda\)-rings \((A,\theta_A)\to (B,\theta_B)\) is a ring homomorphism \(f\colon A\to B\) such that \(\symFunctions(f)\theta_A = \theta_B f\).  We refer to such a morphism as a \define{\(\lambda\)-morphism}.
\end{defn}
It would, of course, appear more natural to ask for the map \(\theta_A\) to be a \emph{ring} homomorphism.  But this requirement is only one of the two additional requirements reserved for \(\lambda\)-rings.  The second additional requirement takes into account that the universal \(\lambda\)-ring \(\symFunctions A\) can itself be equipped with a canonical pre-\(\lambda\)-structure for any ring \(A\).

\begin{defn}
  A {\bf \(\lambda\)-ring} is a pre-\(\lambda\)-ring \((A,\theta_A)\) such that \(\theta_A\) is a \(\lambda\)-morphism.
\end{defn}
It turns out that the canonical pre-\(\lambda\)-structure does make the universal \(\lambda\)-ring \(\symFunctions A\) a \(\lambda\)-ring, so the terminology is sane.  The observation alluded to at the beginning of this section is that any symmetric function \(\phi\in\symFunctions\) defines an ``operation'' on any (pre-)\(\lambda\)-ring \(A\), \ie a map \(A\to A\):  the composition of \(\ev_{\phi}\) with \(\theta_A\).
\[\xymatrix{
  {A} \ar@{..>}[dr]\ar[r]^{\theta_A} & {\symFunctions A} \ar[d]^{\ev_{\phi}} \\
  & A
  }
\]
In particular, we have families of operations \(\lambda^k\), \(\sigma^k\) and \(\psi^k\) corresponding to the symmetric functions specified above.  They are referred to as {\bf exterior power operations}, {\bf symmetric power operations} and {\bf Adams operations}, respectively.

The underlying additive group of the universal \(\lambda\)-ring \(\symFunctions A\) is isomorphic to the multiplicative group \((1 + t A\llbracket t \rrbracket)^\times\) inside the ring of invertible power series over \(A\), and the isomorphism can be chosen such that the projection onto the coefficient of \(t^i\) corresponds to \(\ev_{\lambda_i}\) (e.g.~\cite{Hesselholt:Big}*{Prop.~1.14, Rem.~1.21(2)}).  Thus, a pre-\(\lambda\)-structure is completely determined by the operations \(\lambda^i\), and conversely, any family of operations \(\lambda^i\) for which the map
\begin{align*}
A &\xrightarrow{\;\;\lambda_t\;} (1 + t A\llbracket t \rrbracket)^\times\\
a &\;\;\mapsto\;\; 1 + \lambda^1(a)t + \lambda^2(a)t^2 + \dots
\end{align*}
is a group homomorphism, and for which \(\lambda^1(a) = a\), defines a \(\lambda\)-structure. This recovers the traditional definition of a pre-\(\lambda\)-structure as a family of operations \(\lambda^i\) (with \(\lambda^0=1\) and \(\lambda^1=\id\)) satisfying the relation \(\lambda^k(x+y) = \sum_{i+j=k}\lambda^i(x)\lambda^j(y)\) for all \(k\geq 0\) and all \(x,y\in A\).

The question whether the resulting pre-\(\lambda\)-structure is a \(\lambda\)-structure can similarly be reduced to certain polynomial identities, though these are more difficult to state and often also more difficult to verify in practice.  However, for pre-\(\lambda\)-rings with some additional structure, there are certain standard criteria that make life easier.
\begin{defn}
  An \define{augmented (pre-)\(\lambda\)-ring} is a (pre-)\(\lambda\)-ring \(A\) together with a \(\lambda\)-morphism
  \[
  d\colon A\to \ZZ,
  \]
  where the (pre-)\(\lambda\)-structure on \(\ZZ\) is defined by \(\lambda^i(n) := \binom{n}{i}\).

  A \define{(pre-)\(\lambda\)-ring with positive structure} is an augmented (pre-)\(\lambda\)-ring \(A\) together with a specified subset \(A_{>0}\subset A\) on which \(d\) is positive and which generates \(A\) in the strong sense that any element of \(A\) can be written as a difference of elements in \(A_{>0}\); it is moreover required to satisfy a list of axioms for which we refer to \cite{Me:LambdaReps}*{\S3}.
\end{defn}

For example, one of the axioms for a positive structure is that for an element \(e\in A_{>0}\), the exterior powers \(\lambda^k e\) vanish for all \(k>d(e)\).  We will refer to elements of \(A_{>0}\) as \define{positive elements}, and to positive elements \(l\) of augmentation \(d(l) = 1\) as \define{line elements}.  The motivating example, the K-ring \(\K(X)\) of a connected scheme \(X\), is augmented by the rank homomorphism, and a set of positive elements is given by the classes of vector bundles.  The situation for the \GrothendieckWitt ring will be analogous.

Here are two simple criteria for showing that a pre-\(\lambda\)-ring with positive structure is a \(\lambda\)-ring:
\begin{description}
\item[Splitting Criterion] If all positive elements of \(A\) decompose into sums of line elements, then \(A\) is a \(\lambda\)-ring.

\item[Detection Criterion] If for any pair of positive elements \(e_1, e_2 \in A_{>0}\) we can find a \(\lambda\)-ring \(A'\) and a \(\lambda\)-morphism \(A'\to A\) with both \(e_1\) and \(e_2\) in its image, then \(A\) is a \(\lambda\)-ring.
\end{description}
We again refer to \cite{Me:LambdaReps} for details.

\subsection{The $\gamma$-filtration}
The \define{\(\gamma\)-operations} on a pre-\(\lambda\)-ring \(A\) can be defined as \(\gamma^n(x) := \lambda^n(x+n-1)\).  They again satisfy the identity \(\gamma^k(x+y) = \sum_{i+j=k}\gamma^i(x)\gamma^j(y)\).
\begin{defn}
  The \(\gamma\)-filtration on an augmented pre-\(\lambda\)-ring \(A\) is defined as follows:
  \begin{align*}
    \gammaF{0} A &:= A\\
    \gammaF{1} A &:= \ker(A\xrightarrow{d} \ZZ)\\
    \gammaF{i} A &:= \left(\ctext{10cm}{subgroup generated by all finite products\\ \(\textstyle\prod_j \gamma^{i_j}(a_j)\) with \(a_j\in \gammaF{1} A\) and \(\textstyle\sum_j{i_j} \geq i\)}\right)
    &\text{ for } i > 1
  \end{align*}
\end{defn}
This is in fact a filtration by ideals, multiplicative in the sense that \(\gammaF{i} A \cdot \gammaF{j} A \subset \gammaF{i+j}A\), hence we have an associated graded ring
\[
\gr^*_\gamma A := \bigoplus_i \gammaF{i} A/ \gammaF{i+1} A.
\]
See \cite{AtiyahTall}*{\S4} or \cite{FultonLang}*{III~\S1} for details.  The following lemma is sometimes useful for concrete computations.
\begin{lem}\label{lem:gamma-filtration-generators}
    If \(A\) is a pre-\(\lambda\)-ring with positive structure such that every positive element in \(A\) can be written as a sum of line elements, then \(\gammaF{k} A = (\gammaF{1} A)^k\).

  More generally, suppose that \(A\) is an augmented pre-\(\lambda\)-ring, and let \(E\subset A\) be some set of additive generators of \(\gammaF{1} A\).
  Then \(\gammaF{k} A\) is additively generated by finite products of the form
  \(
  \prod_j \gamma^{i_j}(e_j)
  \)
  with \(e_j\in E\) and \(\sum_j i_j \geq k\).
\end{lem}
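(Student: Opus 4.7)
The plan is to establish the general statement (2) first and then derive (1) as a quick corollary. I would introduce $F'_k \subseteq A$ for the subgroup additively generated by the products described in (2). The inclusion $F'_k \subseteq \gammaF{k}A$ is immediate from the definition of the $\gamma$-filtration, so the task reduces to showing that every generating product $\prod_j \gamma^{i_j}(a_j)$ of $\gammaF{k}A$ (with $a_j \in \gammaF{1}A$ and $\sum_j i_j \geq k$) already lies in $F'_k$.

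Since $E$ generates $\gammaF{1}A$ additively, I can write each $a_j = \sum_s \epsilon_{j,s} e_{j,s}$ with signs $\epsilon_{j,s} = \pm 1$ and $e_{j,s} \in E$. The additivity $\gamma_t(x+y) = \gamma_t(x)\gamma_t(y)$ then expands $\gamma^{i_j}(a_j)$ into a $\ZZ$-linear combination of products $\prod_s \gamma^{k_s}(\pm e_{j,s})$ with $\sum_s k_s = i_j$. To eliminate the minus signs, I plan to prove by induction on $n$ that for any $e \in E$, the element $\gamma^n(-e)$ is a $\ZZ$-linear combination of products $\prod_r \gamma^{l_r}(e)$ with each $l_r \geq 1$ and $\sum_r l_r = n$. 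This follows by rearranging $0 = \gamma^n(0) = \gamma^n(e + (-e)) = \sum_{i=0}^n \gamma^i(e)\,\gamma^{n-i}(-e)$ into $\gamma^n(-e) = -\sum_{i=1}^n \gamma^i(e)\,\gamma^{n-i}(-e)$ and invoking the inductive hypothesis. Substituting back shows that $\prod_j \gamma^{i_j}(a_j)$ is a $\ZZ$-linear combination of products $\prod_r \gamma^{l_r}(e_r)$ with $e_r \in E$ and total weight $\sum_r l_r = \sum_j i_j \geq k$, which is the desired conclusion.

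To deduce (1), I would apply (2) to the set $E := \{l - 1 \mid l \text{ a line element}\}$. Since every positive element is by hypothesis a sum of line elements, and any element of $\gammaF{1}A$ is a difference of two positive elements of equal augmentation, $E$ additively generates $\gammaF{1}A$. A direct computation from $\lambda_t(l) = 1 + lt$ yields $\gamma_t(l - 1) = \gamma_t(l)\gamma_t(-1) = \bigl(1 + lt/(1-t)\bigr)(1 - t) = 1 + (l - 1)t$, so $\gamma^1(l - 1) = l - 1$ and $\gamma^n(l - 1) = 0$ for $n \geq 2$. In the description from (2), then, only the products in which every $\gamma^{i_j}$ has $i_j = 1$ survive, yielding $\gammaF{k}A \subseteq (\gammaF{1}A)^k$; the reverse inclusion is immediate from multiplicativity of the filtration.

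The main obstacle is the sign-elimination step: one must track weights carefully through the inductive formula for $\gamma^n(-e)$ to ensure that no $\gamma^0(e) = 1$ factors contribute and artificially lower the total weight. Once that auxiliary lemma is in hand, everything else is a routine expansion of the addition formula for the $\gamma$-operations.
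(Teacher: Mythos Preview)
Your proposal is correct and follows essentially the same approach as the paper's proof: expand each $\gamma^{i}(x)$ via the additivity identity into products of $\gamma^{i_j}(\pm e_j)$, then eliminate the signs using the relation $\sum_{i=0}^{n}\gamma^{i}(e)\gamma^{n-i}(-e)=0$. The paper is terser---it simply asserts that $\gamma^{i}(-e)$ is a linear combination of products $\prod_j\gamma^{i_j}(e)$ with $\sum_j i_j = i$ ``from the above identity'' and cites \cite{FultonLang} for part~(1)---whereas you spell out the induction and give the explicit derivation of~(1) from~(2) via $E=\{l-1\}$ and the computation $\gamma_t(l-1)=1+(l-1)t$; but the underlying argument is the same.
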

\begin{proof}
  The first assertion may be found in \cite{FultonLang}*{III~\S1}.
  It also follows from the second, which we now prove.
  As each \(x\in \gammaF{1} A\) can be written as a linear combination of elements of \(E\),  we can write any \(\gamma^{i}(x)\) as a linear combination of products of the form \(\prod_j \gamma^{i_j}(\pm e_j)\) with \(e_j\in E\) and \(\sum_j i_j = i\).
  Thus, \(\gammaF{k} A\) can be generated by finite products of the form \(\prod_j \gamma^{i_j}(\pm e_j)\), with \(e_j\in E\) and \(\sum_j i_j\geq k\).
  Moreover, \(\gamma^i(-e)\) is a linear combination of products of the form \(\prod_j \gamma^{i_j}(e)\) with \(\sum_j i_j = i\):  this follows from the above identity for \(\gamma^k(x+y)\).
  Thus, \(\gammaF{k} A\) is already generated by products of the form described.
\end{proof}

For \(\lambda\)-rings with positive structure, we also have the following general fact:
\begin{lem}[\cite{FultonLang}*{III, Thm~1.7}]\label{lem:graded-degree-1}
  For any \(\lambda\)-ring \(A\) with positive structure, the additive group \(\gr^1 A = \gammaF{1} A/\gammaF{2} A\) is isomorphic to the multiplicative group of line elements in \(A\).
\end{lem}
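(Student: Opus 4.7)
The plan is to construct mutually inverse group homomorphisms between the multiplicative group $L$ of line elements and the additive group $\gr^1 A = \gammaF{1}A/\gammaF{2}A$. In the forward direction, set $\phi(l) := [l - 1]$. Since $d(l) = 1$ we have $l - 1 \in \gammaF{1}A$, and multiplicativity of the $\gamma$-filtration gives $(l_1 - 1)(l_2 - 1) \in \gammaF{2}A$. Expansion of this product then yields $[l_1 l_2 - 1] \equiv [l_1 - 1] + [l_2 - 1]$ modulo $\gammaF{2}A$, so $\phi$ is a group homomorphism.

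For the inverse, I would build a determinant. For $e \in A_{>0}$ put $\det(e) := \lambda^{d(e)}(e)$, which is a line element by the axioms of a positive structure. The Cartan-type formula $\lambda^k(x+y) = \sum_{i+j=k} \lambda^i(x)\lambda^j(y)$ together with the vanishing $\lambda^k(e) = 0$ for $k > d(e)$ forces additivity $\det(e_1 + e_2) = \det(e_1)\det(e_2)$, so $\det$ extends uniquely to a group homomorphism $(A, +) \to (L, \cdot)$ via $\det(e_1 - e_2) := \det(e_1)\det(e_2)^{-1}$. Restriction gives $\det: \gammaF{1}A \to L$, and a direct check yields $\det(l - 1) = l$, so $\det \circ \phi = \id_L$.

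Completing the argument then amounts to verifying that $\det$ vanishes on $\gammaF{2}A$---so that it descends to $\overline{\det}: \gr^1 A \to L$---and that $\phi \circ \overline{\det} = \id$, equivalently $\det(x) - 1 \equiv x \pmod{\gammaF{2}A}$ for all $x \in \gammaF{1}A$. This is where the main difficulty lies, and it genuinely uses the full $\lambda$-ring structure rather than just a pre-$\lambda$-structure. If every positive element split as a sum of line elements, both claims would be routine: by \Cref{lem:gamma-filtration-generators} the ideal $\gammaF{2}A$ is additively generated by products $\prod_j \gamma^{i_j}(l_j - 1)$ of total degree at least two; a factor with $i_j \geq 2$ already vanishes because $\lambda^i(l) = 0$ for $i \geq 2$, and for the remaining degree-two products one computes $\det((l_1 - 1)(l_2 - 1)) = \lambda^2(l_1 l_2 + 1) \cdot \lambda^2(l_1 + l_2)^{-1} = l_1 l_2 \cdot (l_1 l_2)^{-1} = 1$; meanwhile $\det(e) - 1 \equiv e - d(e)$ follows by a telescoping argument on $l_1 \cdots l_n - 1 = \sum_i l_1 \cdots l_{i-1}(l_i - 1)$.

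The main obstacle is therefore the absence of a universal splitting principle in $A$. The standard resolution exploits the $\lambda$-ring axioms to transport each required identity from an auxiliary $\lambda$-ring in which the relevant positive elements do split, in the spirit of the Detection Criterion recalled in \Cref{sec:general-lambda}. The full argument is carried out in \cite{FultonLang}*{III~\S1}.
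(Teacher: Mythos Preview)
The paper gives no proof of this lemma; it simply cites \cite{FultonLang}*{III, Thm~1.7}.  Your sketch is precisely the standard argument from Fulton--Lang: construct the determinant, verify that \(l\mapsto l-1\) and \(\det\) are inverse modulo \(\gammaF{2}A\), and reduce the two non-trivial verifications to the split case.

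One small correction: the tool you invoke for the reduction is misnamed.  The Detection Criterion of \Cref{sec:general-lambda} produces \(\lambda\)-morphisms \emph{into} \(A\); what you need here is an extension \emph{out of} \(A\) in which a given finite collection of positive elements splits.  That is the \emph{algebraic splitting principle} for \(\lambda\)-rings with positive structure, recalled at the start of \Cref{sec:no splitting} and proved in \cite{FultonLang}*{I,~\S2}.  (Unlike the geometric or ``real'' versions discussed later in that section, the algebraic splitting principle is available in any \(\lambda\)-ring with positive structure, which is exactly why the lemma requires the full \(\lambda\)-ring hypothesis rather than just a pre-\(\lambda\)-structure.)  With that substitution your outline is correct and matches the cited reference.
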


\section{The $\lambda$-structure on the Grothendieck-Witt ring}
\subsection{The pre-$\lambda$-structure}
\begin{prop}\label{prop:lambda-on-GW}
  Let \(X\) be a scheme. The exterior power operations  \( \lambda^k\colon (\vb M, \mu) \mapsto (\Lambda^k \vb M, \Lambda^k \mu) \) induce well-defined maps on \(\GW(X)\) which provide \(\GW(X)\) with the structure of a pre-\(\lambda\)-ring.
\end{prop}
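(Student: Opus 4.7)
The plan is to define $\lambda^k[\mathcal{M},\mu] := [\Lambda^k\mathcal{M},\Lambda^k\mu]$, where $\Lambda^k\mu$ is determined on decomposable sections by
\[
\Lambda^k\mu(v_1\wedge\cdots\wedge v_k,\; w_1\wedge\cdots\wedge w_k) \;=\; \det\bigl(\mu(v_i,w_j)\bigr).
\]
First I would verify that this construction is well-defined on isometry classes: symmetry of $\Lambda^k\mu$ follows because $\det$ is invariant under simultaneous transposition of rows and columns, non-degeneracy is a local statement (locally $\Lambda^k\mu$ is represented by the $k$-th exterior power of the invertible matrix representing $\mu$, hence invertible), and functoriality with respect to isometries is automatic. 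This gives well-defined operations on the commutative monoid of isometry classes of non-degenerate symmetric bundles under orthogonal sum.

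The key step is then to establish the additivity formula
\[
\Lambda^k\bigl((\mathcal{M},\mu)\perp(\mathcal{N},\nu)\bigr) \;\cong\; \bigoplus_{i+j=k}(\Lambda^i\mathcal{M},\Lambda^i\mu)\tensor(\Lambda^j\mathcal{N},\Lambda^j\nu)
\]
as symmetric bundles, where the tensor product on the right carries the natural tensor product form. The underlying isomorphism of bundles is classical; the content is that it is an isometry. This can be checked on decomposable sections via a block-matrix expansion: in a split basis the matrix of $\mu\perp\nu$ is block-diagonal, so the only non-vanishing $k\times k$ minors are those picking $i$ rows and columns from the first block and $k-i$ from the second, and these factor as the product of determinants from the two blocks—precisely the pairing on $\Lambda^i\mathcal{M}\tensor\Lambda^{k-i}\mathcal{N}$. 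Orthogonality of summands for distinct $(i,j)$ follows similarly, because the corresponding mixed minors vanish.

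Combining these, the total map $\lambda_t[\mathcal{M},\mu] := \sum_{k\geq 0}[\Lambda^k\mathcal{M},\Lambda^k\mu]\,t^k$ sends $\perp$ to multiplication in $(1+t\GW(X)\llbracket t\rrbracket)^\times$. Since $\Lambda^0\mathcal{M}=\O_X$ with its canonical unit form and $\Lambda^1=\id$, the image has constant term $1$ and linear coefficient equal to $\id$. By the universal property of the Grothendieck group of the monoid of isometry classes under $\perp$, the map $\lambda_t$ extends to a group homomorphism $\GW(X)\to(1+t\GW(X)\llbracket t\rrbracket)^\times$, which by \Cref{sec:general-lambda} is exactly the data of a pre-$\lambda$-structure with $\lambda^1=\id$.

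The main obstacle is the isometry claim in the additivity step. While the underlying bundle isomorphism is well known, the verification that the induced pairing on $\Lambda^i\mathcal{M}\tensor\Lambda^{k-i}\mathcal{N}$ agrees with $\Lambda^i\mu\tensor\Lambda^{k-i}\nu$ (and not some sign-twisted variant) requires careful bookkeeping of the signs arising when reordering wedge factors $v_{i_1}\wedge\cdots\wedge v_{i_k}$ into grouped form. Once this sign calculation is settled, the rest of the argument is the standard extension-by-universality familiar from the analogous construction on $\K(X)$.
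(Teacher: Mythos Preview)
Your Steps~1 and~2 are essentially the paper's Steps~1 and~2, and they are fine. But there is a genuine gap: you treat \(\GW(X)\) as the Grothendieck group of the monoid of isometry classes under \(\perp\), and invoke its universal property to extend \(\lambda_t\). For a general scheme this is not the definition. The \GrothendieckWitt ring carries the additional relation \([\mathcal{M},\mu] = [H(\mathcal{L})]\) whenever \((\mathcal{M},\mu)\) is metabolic with Lagrangian \(\mathcal{L}\); over a non-affine \(X\) a metabolic bundle need not be isometric to a hyperbolic one, so this relation is not already forced by additivity under \(\perp\). Your argument therefore produces a homomorphism out of the wrong group and does not, as it stands, descend to \(\GW(X)\).

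This is precisely the content of the paper's Step~3, and it is where the real work lies. One must show that \(\lambda_t(\mathcal{M},\mu) = \lambda_t(H(\mathcal{L}))\) for every metabolic \((\mathcal{M},\mu)\) with Lagrangian \(\mathcal{L}\). The paper handles this via \cref{lemFiltration}: the short exact sequence \(0\to\mathcal{L}\to\mathcal{M}\to\mathcal{L}^\vee\to 0\) induces a natural filtration on \(\Lambda^n\mathcal{M}\) with successive quotients \(\Lambda^i\mathcal{L}\otimes\Lambda^{n-i}\mathcal{L}^\vee\), and one checks (locally, using the naturality of the filtration) that suitable pieces of this filtration are Lagrangians or sub-Lagrangians for \(\Lambda^n(\mathcal{M},\mu)\), matching those for \(\Lambda^n H(\mathcal{L})\). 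As the paper remarks, Step~3 is redundant when \(X\) is affine (every short exact sequence of vector bundles splits, so metabolic equals hyperbolic), which is why your argument would suffice in that case---but not in the generality claimed.
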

Our proof of the existence of a pre-\(\lambda\)-structure will follow the same pattern as the proof for symmetric representation rings in \cite{Me:LambdaReps}:

\paragraph{Step 1.} The assignment \(\lambda^i(\vb M, \mu) := (\Lambda^i \vb M, \Lambda^i\mu)\) is well-defined on the set of isometry classes of symmetric vector bundles over \(X\), so that we have an induced map
\begin{align*}
\lambda_t\colon \left\{\ctext{5cm}{isometry classes of symmetric vector bundles over \(X\)}\right\} &\longrightarrow (1 + t \GW(X)\llbracket t \rrbracket)^\times.
\intertext{%
We extend it linearly to a group homomorphism
}
\bigoplus\ZZ(\vb M, \mu) &\longrightarrow (1 + t \GW(X)\llbracket t \rrbracket)^\times,
\end{align*}
where the sum on the left is over all isometry classes of symmetric vector bundles over \(X\).

\paragraph{Step 2.} The map \(\lambda_t\) is additive in the sense that
\[
\lambda_t((\vb M, \mu)\perp(\vb N,\nu)) = \lambda_t(\vb M, \mu)\lambda_t(\vb N,\nu).
\]
Thus, it factors through the quotient of \(\bigoplus\ZZ(\vb M, \mu)\) by the ideal generated by the relations
\(
((\vb M, \mu)\perp(\vb N,\nu)) = (\vb M, \mu) + (\vb N,\nu)
\).
\paragraph{Step 3.}  The homomorphism \(\lambda_t\) respects the relation \( (\vb M,\mu) = H(\vb L)\) for every metabolic vector bundle \( (\vb M,\mu) \) with Lagrangian \(\vb L\). Thus, we obtain the desired factorization
\[
\lambda_t\colon \GW(X)\to (1 + t \GW(X)\llbracket t \rrbracket)^\times.
\]

To carry out these steps, we only need to replace all arguments on the level of vector spaces of \cite{Me:LambdaReps} with local arguments.  We formulate the key lemma in detail and then sketch the remaining part of the proof.
\begin{lemFiltration}[\cf~\cite{SGA6}*{Expos^^e9~V, Lemme~2.2.1}]\label{lemFiltration}
  Let \(0\to \vb L\to \vb M\to \vb N\to 0\) be an extension of vector bundles over a scheme \(X\). Then we can find a filtration of \(\Lambda^n \vb M\) by sub-vector bundles
  \(
  \Lambda^n \vb M = \vb M^0 \supset \vb M^1 \supset \vb M^2 \supset \cdots
  \)
  together with isomorphisms
  \[
    \pi_{\vb M}^i\colon \factor{\vb M^i}{\vb M^{i+1}} \cong \Lambda^i \vb L \otimes \Lambda^{n-i} \vb N.
  \]
  More precisely, there is a unique way of associating such filtrations and isomorphisms with extensions of vector bundles on schemes subject to the following conditions:
  \begin{itemize}
  \item [(1)]   The filtrations are natural with respect to isomorphisms of extensions. That is, given extensions \(\vb M\) and \(\tilde{\vb M}\) of  \(\vb N\) by \(\vb L\), any  isomorphism \(\phi\colon \vb M\to \tilde{\vb M}\) for which
    \[\xymatrix{
      0 \ar[r] & \vb L \ar[r]\ar@{=}[d] & \vb M          \ar[r]\ar[d]^{\phi}_{\cong} & \vb N \ar[r]\ar@{=}[d] & 0 \\
      0 \ar[r] & \vb L \ar[r]           & {\tilde{\vb M}} \ar[r]                    & \vb N \ar[r]           & 0
    }\]
    commutes restricts to isomorphisms \(\vb M^i \to \tilde{\vb M}^i\) compatible with the isomorphisms \(\pi_{\vb M}^i\) and \(\tilde\pi_{\vb M}^i\) in the sense that
    \[\xymatrix{
      {\factor{\vb M^i}{\vb M^{i+1}}} \ar[rd]^{\cong}_{\pi_{\vb M}^i}\ar[rr]_{\cong}^{\bar \phi} && \ar[ld]_{\cong}^{\pi_{\tilde{\vb M}}^i} {\factor{\tilde{\vb M}^i}{\tilde{\vb M}^{i+1}}} \\
      & {\Lambda^i \vb L\otimes \Lambda^{n-i} \vb N}
    }\]
    commutes.

  \item[(1')]\label{i:filt-1}
    The filtration is natural with respect to morphisms of schemes \mbox{\(f\colon Y\to X\)}:  \( (f^*\vb M)^i = f^*(\vb M^i)\) and \(f^*(\pi_{\vb M}^i) = \pi_{f^*\vb M}^i\) under the identification of  \(\Lambda^n(f^* \vb M)\) with \(f^*(\Lambda^n \vb M)\).

  \item[(2)]\label{i:filt-2}
    For the trivial extension, \((\vb L\oplus \vb N)^i  \subset \Lambda^n(\vb L\oplus \vb N)\) corresponds to the submodule
    \[
    \textstyle\bigoplus_{j\geq i} \Lambda^j \vb L\otimes \Lambda^{n-j} \vb N \quad \subset \quad \textstyle \bigoplus_j \Lambda^j \vb L\otimes \Lambda^{n-j} \vb N
    \]
    under the canonical isomorphism \(\Lambda^n(\vb L\oplus \vb N) \cong \bigoplus_j \Lambda^j \vb L\otimes \Lambda^{n-j} \vb N\), and the iso\-morphisms
    \[
    \pi_{\vb L \oplus \vb N}^i\colon \factor{(\vb L\oplus \vb N)^i}{(\vb L\oplus \vb N)^{i+1}} \xrightarrow{\cong} \Lambda^i \vb L\otimes \Lambda^{n-i} \vb N
    \]
    are induced by the canonical projections.
  \end{itemize}
\end{lemFiltration}
(The numbering {\it (1), (1'), (2)} is chosen to be as close as possible to the Filtration Lemma in \cite{Me:LambdaReps}.  For a closer analogy, statement~{\it (1)} there should also be split into two parts {\it (1)} and {\it (1')}: naturality with respect to isomorphisms of extensions, and naturality with respect to pullback along a group homomorphism.  As stated there, only the pullback to the trivial group is covered.)

\begin{proof}[Proof of \cref{lemFiltration}]
  Uniqueness is clear: if filtrations and isomorphisms satisfying the above conditions exist, they are determined locally by {\it (1)} and {\it (2)}, hence globally by {\it (1')}.

  Existence may be proved via the following direct construction.  Let \(0\to \vb L\xrightarrow{\iota} \vb M \xrightarrow{\pi} \vb N \to 0\) be an arbitrary short exact sequence of vector bundles over \(X\).  Consider the morphism \(\Lambda^i \vb L\otimes \Lambda^{n-i} \vb M \to \Lambda^n \vb M\) induced by \(\iota\).  Let \(\vb M_i\) be its kernel and \(\vb M^i\) its image, so that we have a short exact sequence of quasi\-coherent sheaves:
  \begin{equation*}
    \xymatrix{
      0  \ar[r] &  {\vb M_i} \ar[r] & {\Lambda^i \vb L\otimes \Lambda^{n-i} \vb M} \ar[r] & {\vb M^i} \ar[r] & 0
    }
  \end{equation*}
  We claim {\it (a)} that the sheaves \(\vb M_i\) and \(\vb M^i\) are again vector bundles, {\it (b)} that the morphism \(\Lambda^i \vb L\otimes \Lambda^{n-i} \vb M \to \Lambda^i \vb L\otimes \Lambda^{n-i} \vb N\) induced by \(\pi\) factors through \(\vb M^i\) and induces an isomorphism
  \[
  \pi_{\vb M}^i\colon \factor{\vb M^i}{\vb M^{i+1}}\to \Lambda^i \vb L\otimes \Lambda^{n-i} \vb N,
  \]
  and {\it (c)} that this construction of subbundles \(\vb M^i\) and isomorphisms \(\pi_{\vb M}^i\) satisfies the properties {\it (1), (1'), (2)}.
  This can be checked in the following order:  First, verify most of {\it (c)}: the first half of statement~{\it (1)}, statement~{\it (1')} and statement~{\it (2)}.  Then {\it (a)} and {\it (b)} follow because any extension of vector bundles is locally split.  Lastly, the commutativity of the triangle in {\it (1)} can also be checked locally.
\end{proof}

\begin{proof}[Proof of \Cref{prop:lambda-on-GW}]
For Step~1, we note that the exterior power operation
\(
  \Lambda^i\colon  \cat Vec(X) \to \cat Vec(X)
\)
is a duality functor in the sense that we have an isomorphism
\(
   \eta_{\vb M}\colon \Lambda^i(\vb M^\vee) \xrightarrow{\cong} (\Lambda^i \vb M)^\vee
\)
for any vector bundle \(\vb M\). Indeed, we can define \(\eta_{\vb M}\) on sections by
\[\phi_1\wedge\dots\wedge\phi_i \mapsto (m_1\wedge\dots\wedge m_i \mapsto \det(\phi_\alpha(m_\beta)). \]
The fact that this is an isomorphism can be checked locally and follows from \cite{Bourbaki:Algebre}*{Ch.~3, \S\,11.5, (30 bis)}.
We therefore obtain a well-defined operation on the set of isometry classes of symmetric vector bundles over \(X\) by defining
\[
\lambda^i(\vb M,\mu) := (\Lambda^i \vb M, \eta_\vb M\circ \Lambda^i(\mu)).
\]

Step~2 is completely analogous to the argument in \cite{Me:LambdaReps}.

For Step~3, let \( (\vb M,\mu) \) be metabolic with Lagrangian \(\vb L\), so that we have a short exact sequence
  \begin{equation}\label{eq:lambda-on-GW:ses}
  0 \to \vb L \xrightarrow{i} \vb M \xrightarrow{i^\dual\mu} \vb L^\dual \to 0.
  \end{equation}
  When \(n\) is odd, say \(n=2k-1\), we claim that \(\vb M^k\) is a Lagrangian of \(\Lambda^n(\vb M,\mu)\).
  When \(n\) is even, say \(n=2k\), we claim that \(\vb M^{k+1}\) is an admissible sub-Lagrangian of \(\Lambda^n(\vb M,\mu)\) with \((\vb M^{k+1})^\perp = \vb M^k\), and that the composition of isomorphisms
  \[
  \factor{\vb M^{k}}{\vb M^{k+1}} \cong \Lambda^k \vb L\otimes \Lambda^k(\vb L^\dual) \cong \factor{H(\vb L)^{k}}{H(\vb L)^{k+1}}
  \]
  of \cref{lemFiltration} is even an isometry between \((\vb M^{k}/\vb M^{k+1},\mu)\) and \((H(\vb L)^{k}/H(\vb L)^{k+1}, \bar{\mm{0 & 1 \\ 1 & 0}})\).  All of these claims can be checked locally.  Both the local arguments and the conclusions are analogous to those of \cite{Me:LambdaReps}.
\end{proof}

\begin{rem}
  In many cases, Step~3 can be simplified.  If \(X\) is affine, then Step~3 is redundant because any short exact sequence of vector bundles splits.  If \(X\) is a regular quasiprojective variety over a field, we can reduce the argument to the affine case using Jouanolou's trick and homotopy invariance.  Indeed,  Jouanolou's trick yields an affine vector bundle torsor \(\pi\colon E\to X\) \cite{Weibel:KH}*{Prop.~4.3}, and as \(X\) is regular, \(\pi^*\) is an isomorphism: it is an isomorphism on \(\K(-)\) by \cite{Weibel:KH}*{below Ex.~4.7}, an isomorphism on \(\W(-)\) by \cite{Gille:HomotopyInvariance}*{Cor.~4.2}, and hence an isomorphism on \(\GW(-)\) by Karoubi induction.
\end{rem}

\subsection{The pre-$\lambda$-structure is a $\lambda$-structure}
We would now like to show that the pre-\(\lambda\)-structure on \(\GW(X)\) discussed above is an actual \(\lambda\)-structure.  In some cases, this is easy:

\begin{prop}\label{GW-R-special}
  For any connected semi-local commutative ring \(R\) in which two is invertible, the \GrothendieckWitt ring \(\GW(R)\) is a \(\lambda\)-ring.
\end{prop}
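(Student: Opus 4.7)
The plan is to verify the hypotheses of the Splitting Criterion from \Cref{sec:general-lambda}. More precisely, I will exhibit a positive structure on $\GW(R)$ in which every positive element decomposes as a sum of line elements; the criterion then upgrades the pre-$\lambda$-structure of \Cref{prop:lambda-on-GW} to a genuine $\lambda$-structure.

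The positive structure will be set up in complete analogy with $\K(R)$: the positive elements are the isometry classes of non-degenerate symmetric bilinear forms $(\vb M, \mu)$ on finitely generated projective $R$-modules; the augmentation $d\colon \GW(R) \to \ZZ$ is the rank map; and the line elements are the rank-one forms $\langle u \rangle$ with $u \in R^\times$. The axioms of \cite{Me:LambdaReps}*{\S3} then follow from routine local computations, such as $\lambda^k\langle u\rangle = 0$ for $k > 1$, the additivity of rank on orthogonal sums, the identity $\langle u \rangle \otimes \langle v \rangle = \langle uv \rangle$, and the functoriality of exterior powers established in the proof of \Cref{prop:lambda-on-GW}.

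The essential input is then the decomposition of every positive element into a sum of line elements, and this is precisely the classical diagonalization theorem for symmetric bilinear forms over semi-local rings. Since $R$ is connected and semi-local, every finitely generated projective $R$-module is free of constant rank. Over such a ring with $2$ invertible, any non-degenerate symmetric bilinear form on a free module admits an orthogonal basis: one inductively splits off a rank-one summand by choosing a vector $v$ with $\mu(v,v) \in R^\times$, which exists because the associated quadratic form is non-zero modulo each of the finitely many maximal ideals and one invokes the Chinese remainder theorem. Consequently $(\vb M, \mu) \cong \langle u_1 \rangle \perp \cdots \perp \langle u_n \rangle$ for suitable units $u_i$, so every class in $\GW(R)_{>0}$ is a sum of line elements. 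The Splitting Criterion now yields the claim. The only real input beyond the pre-$\lambda$-structure already in hand is this diagonalization result; no genuine obstacle arises, and the more substantial work lies ahead in treating general schemes $X$, where no such splitting is available and the Detection Criterion must be invoked instead.
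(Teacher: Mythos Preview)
Your proposal is correct and follows the same approach as the paper: both invoke the Splitting Criterion from \Cref{sec:general-lambda}, reducing the claim to the diagonalization of symmetric forms over connected semi-local rings with two invertible. The paper simply cites this diagonalization from \cite{Baeza}*{Prop.~I.3.4\slash 3.5}, whereas you sketch the inductive argument explicitly; otherwise the proofs are identical.
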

\begin{proof}
  Over such a ring, any symmetric space decomposes into a sum of line elements \cite{Baeza}*{Prop.~I.3.4\slash 3.5}, so the result follows from the splitting criterion (\Cref{sec:general-lambda}).
\end{proof}

The following result is more interesting:
\begin{thm}
  For any connected scheme \(X\) over a field of characteristic not two, the pre-\(\lambda\)-structure on \(\GW(X)\) introduced above is a \(\lambda\)-structure.
\end{thm}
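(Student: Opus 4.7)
The plan is to apply the Detection Criterion recalled in \Cref{sec:general-lambda}: for any two positive elements $e_1, e_2 \in \GW(X)_{>0}$ I need to produce a $\lambda$-ring $A'$ together with a $\lambda$-morphism $A' \to \GW(X)$ whose image contains both $e_1$ and $e_2$. Since \Cref{GW-R-special} already handles the semi-local case via the Splitting Criterion, what is really needed is a globally defined auxiliary $\lambda$-ring equipped with a natural $\lambda$-morphism to $\GW(X)$. Note that such an $A'$ cannot be taken to be $\GW$ of a scheme on which the bundles decompose into line elements, as the introduction points out that there is no splitting principle for \GrothendieckWitt rings.

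Represent $e_i$ by symmetric vector bundles $(\vb M_i, \mu_i)$ of rank $n_i$, and let $G$ be the étale sheaf of groups $\OO{\mu_1} \times \OO{\mu_2}$ of automorphisms of $(\vb M_1, \mu_1) \oplus (\vb M_2, \mu_2)$, which is an étale twist of the product $\OO{n_1} \times \OO{n_2}$ of algebraic groups over $k$. The pair of bundles determines a canonical $G$-torsor $P \to X$; sending a symmetric $G$-representation $V$ to the associated twisted bundle $P \times^G V$ yields a ring homomorphism
\[
R_{\sym}(G) \to \GW(X)
\]
that by construction sends the two standard representations to $e_1$ and $e_2$ respectively. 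By \cite{Me:LambdaReps}, the symmetric representation ring $R_{\sym}(G)$ is itself a $\lambda$-ring, its $\lambda$-structure being defined via the exterior powers of representations. Since the formation of an associated bundle commutes with exterior powers and with the duality isomorphism $\eta$ appearing in \Cref{prop:lambda-on-GW}, the pullback above is tautologically a $\lambda$-morphism, and the Detection Criterion closes the argument.

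The main obstacle I expect is to make the classifying-space construction rigorous and functorial for an arbitrary connected scheme $X$. In the affine case the argument is essentially trivial, because all extensions of vector bundles split; the same holds for regular quasiprojective varieties by Jouanolou's trick, as noted in the remark after \Cref{prop:lambda-on-GW}. In the generality of the statement, however, the identification of $R_{\sym}(G)$ as a $\lambda$-ring mapping naturally into $\GW(X)$ needs to be carried out at the level of the étale classifying stack $BG$, and the verification that the pullback map is a $\lambda$-morphism relies on the naturality properties already built into \Cref{lemFiltration}\,{\it (1')}. Once this setup is in place, all remaining verifications reduce to naturality of exterior powers, and the Detection Criterion yields the result.
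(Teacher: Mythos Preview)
Your overall strategy matches the paper's: apply the Detection Criterion via a \(\lambda\)-morphism from a symmetric representation ring, using \cite{Me:LambdaReps} as the key input. However, your choice of auxiliary group creates a genuine gap.

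You take \(G = \OO{\mu_1}\times\OO{\mu_2}\), the automorphism sheaf of the given bundles. This is a group scheme over \(X\), not over \(k\), and the result of \cite{Me:LambdaReps} is proved for affine algebraic group schemes over a field; it says nothing about group schemes over a general base. Worse, if \(G\) is the automorphism sheaf of \((\vb M_1,\mu_1)\oplus(\vb M_2,\mu_2)\), then the ``canonical \(G\)-torsor'' you invoke is the trivial one, and the ``standard representations'' of \(G\) are \((\vb M_1,\mu_1)\) and \((\vb M_2,\mu_2)\) themselves. Your map \(R_{\sym}(G)\to\GW(X)\) then amounts to the inclusion of a subring generated by objects already sitting in \(\GW(X)\), and showing that \(R_{\sym}(G)\) is a \(\lambda\)-ring is no easier than the original problem---you have not actually left \(\GW(X)\).

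The paper sidesteps this circularity by taking \(G\) to be the \emph{split} group \(\OO{n_1}\times\OO{n_2}\) over \(k\), for which \cite{Me:LambdaReps} applies directly, and taking the torsor to be the sheaf of isometries \(\sheaf{Iso}((\O^{n_1},q_{n_1}),(\vb M_1,\mu_1))\times\sheaf{Iso}((\O^{n_2},q_{n_2}),(\vb M_2,\mu_2))\); see \Cref{lem:GW3}. Because symmetric bundles are only \'etale-locally isometric to the standard form, this is an \'etale torsor rather than a Zariski one, and one must then verify with some care that the twisting functor \(\sS\times_G(-)\) is a duality functor compatible with \(\oplus\), \(\otimes\), \(\Lambda^k\), and \((-)^\vee\); this is the content of \Cref{lem:basic-properties} and \Cref{lem:basic-isometries}, leading to \Cref{lem:K2GW2}. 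Your remark that this compatibility is ``tautological'' and follows from \Cref{lemFiltration}\,{\it (1')} understates the work involved; the Filtration Lemma plays no role here.
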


The usual strategy for proving the analogous result in K-theory is via a ``geometric splitting principle'': see \Cref{sec:no splitting}.  However, as we will see there, no such principle is available for the \GrothendieckWitt ring.  So instead, we follow an alternative strategy, which we recall from \cite{SGA6}*{Expos^^e9~VI, Thm~3.3}.\footnote{
    The result of Serre invoked at the end of the proof in \cite{SGA6} to verify (K1) is \cite{Serre}*{Thm~4}.}
Let \(G\) be a linear algebraic group scheme over \(k\). The principal components of this alternative strategy are:
\begin{itemize}
\item[(K1)] The representation ring \(\K(\Rep(G))\) is a \(\lambda\)-ring.
\item[(K2)] For any \(G\)-torsor \(\sS\) over \(X\), the map
  \[\K(\Rep(G))\to \K(X)\]
  sending a representation \(V\) of \(G\) to the vector bundle \(\sS\times_{G}V\) is a \(\lambda\)-morphism.  (Here, the \(V\) in \(\sS\times_{G}V\) is to be interpreted as a trivial vector bundle over \(X\).)
\item[(K3)]
 For any pair of vector bundles \(\vb E\) and \(\vb F\) over \(X\), there exists a linear algebraic group scheme \(G\) and a \(G\)-torsor \(\sS\) such that both \(\vb E\) and \(\vb F\) lie in the image of the morphism \(\K(\Rep(G))\to \K(X)\) defined by \(\sS\). (\(G\) can be chosen to be a product of general linear groups.)
\end{itemize}
From these three points, the fact that \(\K(X)\) is a \(\lambda\)-ring follows via the detection criterion (\Cref{sec:general-lambda}).
The same argument will clearly work for \(\GW(X)\) provided the following three analogous statements hold. (We now assume that \(\characteristic k \neq 2\).)
\begin{itemize}
\item[(GW1)] The symmetric representation ring \(\GW(\Rep(G))\) is a \(\lambda\)-ring.
\item[(GW2)] For any \(G\)-torsor \(\sS\) over \(X\), the map
  \[\GW(\Rep(G))\to \GW(X)\]
  sending a symmetric representation \(V\) of \(G\) to the symmetric vector bundle \(\sS\times_{G} V\) is a \(\lambda\)-morphism.
\item[(GW3)]
  Any pair of symmetric vector bundles lies in the image of some common morphism \(\GW(\Rep(G))\to \GW(X)\) defined by a \(G\)-torsor as above. (\(G\) can be chosen to be a product of split orthogonal groups.)
\end{itemize}
Statement (GW1) is the main result of \cite{Me:LambdaReps}.  The remaining points (GW2) and (GW3) are discussed below:  see \Cref{lem:K2GW2} and \Cref{lem:GW3}.  Any reader to whom (K2) and (K3) are obvious will most likely consider (GW2) and (GW3) equally obvious---the only point of note is that for (GW3) we have to work in the ^^e9tale topology rather than in the Zariski topology.

\subsubsection*{Twisting by torsors}
Let \(X\) be a scheme with structure sheaf \(\O\).  We fix some Grothendieck topology on \(X\).  (For (K2) and (GW2), the topology is irrelevant.  For (K3) we can take any topology at least as fine as the Zariski topology, while for (GW3) we will need the ^^e9tale topology.)

Given a sheaf of (not necessarily abelian) groups \(\sG\) over \(X\), recall that a (right) \define{\(\sG\)-torsor} is a sheaf of sets \(\sS\) over \(X\) with a right \(\sG\)-action such that:
\begin{compactenum}[(1)]
\item There exists a cover \(\{U_i\to X\}_i\) such that \(\sS(U_i)\neq \emptyset\) for all \(U_i\).  (Any such cover is said to \define{split} \(\sS\).)
\item For all open \((U\to X)\), and for one (hence for all) \(s\in\sS(U)\), the map
  \begin{align*}
    \sG_{|U} &\xrightarrow{} \sS_{|U}\\
    g &\mapsto s.g
  \end{align*}
  is an isomorphism.
\end{compactenum}

\newcommand{\p}[2]{{_{#1}}{[#2]}} 

\begin{defn}\label{def1}
  Let \(\sS\) be a \(\sG\)-torsor as above. For any presheaf  \(\sheaf E\) of \(\O\)-modules with an \(\O\)-linear left \(\sG\)-action, we define a new presheaf of \(\O\)-modules by
  \begin{align*}
    \sS \pretimes_{\sG} \sheaf E
    &:= \coeq\big( (\sS\times \sG)\tensor \sheaf E \rightrightarrows \sS\tensor \sheaf E \big)\\
    &\,= \coker\big( \textstyle\bigoplus_{\sS,\sG} \sheaf E \longrightarrow \textstyle\bigoplus_{\sS} \sheaf E \big),
  \end{align*}
  where, on any open \(U\), the morphism in the second line has the form \(\p{s,g}{v}\mapsto \p{s.g}{v}-\p{s}{g.v}\) for \(s\in\sS(U)\), \(g\in\sG(U)\) and \(v\in\sheaf E(U)\).\footnote{
    We use square brackets with a subscript on the left for an element of a direct sum that is concentrated in a single summand.  A general element of \(\bigoplus_{s\in S} X_s\)  is a finite sum of the form \(\displaystyle \sum_{s\in S} \p{s}{x_s}\)  in this notation.}
  If \(\sheaf E\) is a sheaf, we  define \(\sS\times_{\sG} \sheaf E\) as the sheafification of \(\sS\pretimes_{\sG}\sheaf E\).  Equivalently, we may define \(\sS\times_{\sG}\sheaf E\) by the same formula as \(\sS\pretimes_{\sG}\sheaf E\) provided we interpret the direct sum and cokernel as direct sum and cokernel in the category of sheaves of \(\O\)-modules.
\end{defn}

\begin{rem}
  The sheaf of \(\O\)-modules \(\sS\times_{\sG}\sheaf E\) can alternatively be described as follows:
  Fix a cover \(\{U_i\to X\}_i\) which splits \(\sS\), and fix an element \(s_i\in S(U_i)\) for each \(U_i\).
  Let \(g_{ij}\in\sG(U_i\times_X U_j)\) be the unique element satisfying \(s_j = s_i.g_{ij}\).
  Then \(\sS\times_{\sG}\sheaf E\) is isomorphic to the sheaf given on any open \((V\to X)\) by
  \[
  \{(v_i)_i \in \textstyle\prod_i\sheaf E(V_i) \mid v_i = g_{ij}.v_j \text{ on } V_{ij} \},
  \]
  where \(V_i := V\times_X U_i\), \(V_{ij} := V_i\times_X V_j\).
  (We will not use this description in the following.)
\end{rem}

We next recall the basic properties of \(\sS\times_{\sG}\sheaf E\).  We call two presheaves of \(\O\)-modules {\bf locally isomorphic} if \(X\) has a cover such that the restrictions to each open of the cover are isomorphic.
A morphism of presheaves of \(\O\)-modules is said to be {\bf locally an isomorphism} if \(X\) has a cover such that the restriction of the morphism to each open of the cover is an isomorphism.

\begin{lem}\label{lem:local-iso}\mbox{}\hfill
  \begin{compactenum}[(i)]
  \item For any presheaf \(\sheaf E\) as above, \(\sS\pretimes_{\sG}\sheaf E\) is locally isomorphic to \(\sheaf E\).
  \item For any sheaf \(\sheaf E\) as above, \(\sS\times_{\sG}\sheaf E\) is locally isomorphic to \(\sheaf E\).
  \item The canonical morphism \(\sS\pretimes_{\sG}\sheaf E\to\sS\times_{\sG}\sheaf E\) is locally an isomorphism for any sheaf as above.
  \end{compactenum}
  More precisely, the presheaves in {\it (i) \& (ii)} are isomorphic over any open \((U\to X)\) such that \(\sS(U)\neq \emptyset\), and likewise the morphism in {\it (iii)} is an isomorphism over any such \(U\).
\end{lem}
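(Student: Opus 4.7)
The plan is to reduce both isomorphism assertions to the trivial-torsor case by choosing a local section of $\sS$, and to deduce the sheafified statements from the sheafification functor commuting with restriction to opens. Fix an open $(U\to X)$ with $\sS(U)\neq\emptyset$ and pick $s\in\sS(U)$. Torsor property~(2) implies that, over $U$, every section $s'\in\sS(V)$ for $V\to U$ can be written uniquely as $s'=s_{|V}.g$ for some $g\in\sG(V)$, naturally in $V$.

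For statement~{\it (i)}, restriction of presheaves to $U$ commutes with direct sums and cokernels (both are computed objectwise), so $(\sS\pretimes_{\sG}\sheaf E)_{|U}(V)$ is the quotient of $\bigoplus_{s'\in\sS(V)}\sheaf E(V)$ by the relations $\p{s'.g}{v}=\p{s'}{g.v}$. Define $\varphi_V\colon\sheaf E(V)\to(\sS\pretimes_{\sG}\sheaf E)(V)$ by $v\mapsto\p{s_{|V}}{v}$, and a candidate inverse $\psi_V$ sending $\p{s'}{v}$ to $g.v$ where $s'=s_{|V}.g$. One checks directly that $\psi_V$ respects the defining relations, and that $\varphi_V\psi_V=\id=\psi_V\varphi_V$. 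Naturality in $V$ follows from uniqueness of the factorization $s'=s_{|V}.g$, so $\varphi$ and $\psi$ assemble into the desired isomorphism $\sheaf E_{|U}\cong(\sS\pretimes_{\sG}\sheaf E)_{|U}$.

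Statements~{\it (ii)} and~{\it (iii)} then follow from the fact that sheafification commutes with restriction to an open of the site---both are left adjoints, or equivalently one sees this on stalks. Writing $(-)^{\#}$ for sheafification, we obtain
\[
(\sS\times_{\sG}\sheaf E)_{|U}=\bigl((\sS\pretimes_{\sG}\sheaf E)_{|U}\bigr)^{\#}\cong(\sheaf E_{|U})^{\#}=\sheaf E_{|U},
\]
where the last equality uses that the restriction of a sheaf is a sheaf. The canonical morphism of~{\it (iii)}, restricted to $U$, becomes the sheafification morphism of the already-a-sheaf $\sheaf E_{|U}$, hence is an isomorphism. The only step that requires real attention is the commutation of sheafification with restriction to an open subsite, but for the sites in play (Zariski or etale) this is a standard fact; everything else is a direct manipulation of coequalizer presentations.
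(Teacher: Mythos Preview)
Your argument is correct and follows essentially the same approach as the paper: for {\it (i)} the paper also fixes a section \(s\in\sS(U)\) and uses the unique factorization \(t=s.g_t\) to exhibit \(\sheaf E_{|U}\) as the cokernel defining \((\sS\pretimes_{\sG}\sheaf E)_{|U}\), while you phrase the same computation via explicit mutually inverse maps \(\varphi\) and \(\psi\). For {\it (ii)} and {\it (iii)} the paper simply writes ``follow from {\it (i)}''; your use of the commutation of sheafification with restriction to an open makes explicit exactly the step the paper leaves implicit.
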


\begin{proof}
  For {\it (i)} of the lemma, let \((U\to X)\) be an open such that \(\sS(U)\neq\emptyset\). Fix any \(s\in \sS(U)\). For each \((V\to U)\) and each \(t\in\sS(V)\), there exists a unique element \(g_t\in\sG(V)\) such that \(t = s.g_t\). Therefore, the morphism  \(\bigoplus_{\sS_{|U}} \sheaf E_{|U} \to \sheaf E_{|U}\) sending \(\p{t}{v}\) to \(g_t.v\) describes the cokernel defining \(\sS\pretimes_{\sG}\sheaf E\) over \(U\).
    Statements {\it (ii) \& (iii)} of the lemma follow from {\it (i)}.
\end{proof}

\begin{lem}\label{lem:basic-properties}\mbox{}\hfill
  \begin{compactenum}[(i)]
  \item  The functor \(\sS\times_{\sG} -\) is exact, \ie it takes exact sequences of sheaves of \(\O\)-modules with \(\O\)-linear \(\sG\)-action to exact sequences of sheaves of \(\O\)-modules.
  \item  If \(\sheaf E\) is a sheaf of \(\O\)-modules with \emph{trivial} \(\sG\)-action, then \(\sS\times_{\sG}\sheaf E\cong \sheaf E\).
  \item Given arbitrary sheaves of \(\O\)-modules \(\sheaf E\) and \(\sheaf F\) with \(\O\)-linear \(\sG\)-actions, consider \(\sheaf E\oplus \sheaf F\), \(\sheaf E\tensor \sheaf F\), \(\Lambda^i\sheaf E\) and \(\sheaf E^\dual\) with the induced \(\sG\)-actions.  Then we have the following isomorphisms of \(\O\)-modules, natural in  \(\sheaf E\) and \(\sheaf F\):
    \begin{alignat*}{7}
      \sigma\colon&& \sS\times_{\sG}(\sheaf E\oplus \sheaf F) &\cong (\sS\times_{\sG}\sheaf E) \oplus (\sS\times_{\sG}\sheaf F)\\
      \theta\colon&& \sS\times_{\sG}(\sheaf E\tensor \sheaf F) &\cong (\sS\times_{\sG}\sheaf E) \tensor (\sS\times_{\sG}\sheaf F)\\
      \lambda\colon&& \sS\times_{\sG}(\Lambda^k\sheaf E) &\cong \Lambda^k(\sS\times_{\sG}\sheaf E)\\
      \eta\colon&& \sS\times_{\sG}(\sheaf E^\dual) &\cong (\sS\times_{\sG}\sheaf E)^\dual
    \end{alignat*}
  \end{compactenum}
\end{lem}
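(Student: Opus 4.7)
The plan is to reduce all three parts to local verifications via the preceding Lemma~\ref{lem:local-iso}. Fix a cover $\{U_i \to X\}$ that splits $\sS$ and, for each $i$, a section $s_i \in \sS(U_i)$. By that lemma, the assignment $v \mapsto \p{s_i}{v}$ yields an isomorphism $\sheaf E_{|U_i} \cong (\sS \times_\sG \sheaf E)_{|U_i}$ which is natural in $\sG$-equivariant morphisms of sheaves of $\O$-modules. Since this local trivialisation is just a choice of section, it is compatible with all $\O$-linear operations applied pointwise.

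For (i), exactness of a sequence of sheaves may be tested after restriction to the cover $\{U_i\}$. On each $U_i$ the functor $\sS \times_\sG (-)$ is naturally (in $\sG$-equivariant morphisms) isomorphic to the identity, so a $\sG$-equivariantly exact sequence of sheaves becomes its original, exact self.

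For (ii), triviality of the $\sG$-action on $\sheaf E$ gives $\p{s.g}{v} = \p{s}{g.v} = \p{s}{v}$ in $\sS \pretimes_\sG \sheaf E$. The formula $\p{s}{v} \mapsto v$ therefore defines a canonical morphism of presheaves $\sS \pretimes_\sG \sheaf E \to \sheaf E$, which sheafifies to a morphism $\sS \times_\sG \sheaf E \to \sheaf E$. Locally on the splitting cover this morphism inverts the trivialisation from Lemma~\ref{lem:local-iso}, so it is an isomorphism.

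For (iii), I would build each of $\sigma, \theta, \lambda, \eta$ by applying $\sS \times_\sG (-)$ to the corresponding canonical map of $\O$-modules. These canonical maps are $\sG$-equivariant---with $\sG$ acting on $\sheaf E^\dual$ via $(g\cdot \phi)(v) := \phi(g^{-1}\cdot v)$ and trivially on $\O$---so they induce the desired comparison morphisms; the construction of $\eta$ uses in addition $\theta$ together with part (ii) applied to $\O$, to extract $\eta$ from the evaluation pairing $\sheaf E \otimes \sheaf E^\dual \to \O$. That each of $\sigma, \theta, \lambda, \eta$ is an isomorphism is then a local assertion, and becomes visible after restricting to a $U_i$, where $\sS \times_\sG (-)$ identifies with the identity functor. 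The step I expect to require the most care is $\eta$: the contravariance of $(-)^\dual$ and the inverse appearing in its $\sG$-action mean that the cleanest strategy is to first establish $\sigma, \theta, \lambda$ and then deduce $\eta$ from the $\sG$-equivariance of evaluation.
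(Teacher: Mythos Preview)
Your treatment of (i) and (ii) matches the paper. For (iii), however, the phrase ``applying $\sS\times_{\sG}(-)$ to the corresponding canonical map of $\O$-modules'' does not produce $\theta$ or $\lambda$: these are comparison morphisms between two \emph{different} composites of functors (e.g.\ $\sS\times_{\sG}(-\tensor-)$ versus $(\sS\times_{\sG}-)\tensor(\sS\times_{\sG}-)$), and no single $\sG$-equivariant map has them as its image under the functor. Functoriality alone is not enough here. The paper constructs them explicitly at the presheaf level: for $\lambda$, it writes down the map $\prebigoplus_{\sS}\preLambda^k\sheaf E \to \preLambda^k(\prebigoplus_{\sS}\sheaf E)$ that identifies the summand $\p{s}{\preLambda^k\sheaf E}$ with $\preLambda^k(\p{s}{\sheaf E})$, checks that it descends to a map $\sS\pretimes_{\sG}(\preLambda^k\sheaf E)\to\preLambda^k(\sS\pretimes_{\sG}\sheaf E)$, verifies this is locally an isomorphism via the trivialisations of Lemma~\ref{lem:local-iso}, and then sheafifies (using that $\pretimes$, $\pretensor$, $\preLambda^k$ commute with sheafification). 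The construction of $\theta$ is analogous.

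Your route to $\eta$ via the evaluation pairing $\sheaf E\tensor\sheaf E^\dual\to\O$, combined with $\theta$ and part~(ii), is a legitimate and rather clean alternative to what the paper does: there $\hat\eta$ is written down directly as $\p{s}{\phi}\mapsto\bigl(\p{s.g}{v}\mapsto\phi(g.v)\bigr)$ on opens with $\sS(U)\neq\emptyset$, and a small sheafification diagram has to be chased because $(-)^\dual$ is contravariant. Your approach avoids that chase, but it presupposes $\theta$; so you still owe an honest construction of $\theta$ before the rest of your argument goes through.
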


\begin{proof}
  \begin{asparaenum}[\it (i)]
  \item If we fix \(s\) and \(U\) as in the proof of \Cref{lem:local-iso},  then the induced isomorphism \(\sS\times_{\sG}\sheaf E_{|U}\to\sheaf E_{|U}\) is functorial for morphisms of \(\O\)-modules with \(\O\)-linear \(\sG\)-action.  The claim follows as exactness of a sequence of sheaves can be checked locally.

  \item When \(\sG\) acts trivially on \(\sheaf E\), the local isomorphisms of \Cref{lem:local-iso} do not depend on choices and glue to a global isomorphism.

  \item It is immediate from \Cref{lem:local-iso} that in each case the two sides are locally isomorphic, but we still need to construct global morphisms between them.

    For \(\oplus\) everything is clear.

    For \(\tensor\) and \(\Lambda^k\), we first note that all constructions involved are compatible with sheafification, in the following sense: let \(\pretensor\) and \(\preLambda\) denote the presheaf tensor product and the presheaf exterior power.  Then, for arbitrary presheaves \(\sheaf E\) and \(\sheaf F\),  the canonical morphisms
    \begin{align*}
      (\sheaf E\pretensor\sheaf F)^+ &\to (\sheaf E^+)\tensor(\sheaf F^+)\\
      (\preLambda^k\sheaf E)^+ &\to \Lambda^k(\sheaf E^+)\\
      (\sS\pretimes_{\sG}\sheaf E)^+ &\to \sS\times_{\sG}(\sheaf E^+)
    \end{align*}
    are isomorphisms.  (In the third case, this follows from \Cref{lem:local-iso}.)

    The arguments for \(\tensor\) and \(\Lambda^k\) are very similar, so we only discuss the latter functor.
    Let \(\prebigoplus\) denote the (infinite) direct sum in the category of presheaves.
    We first check that the morphism
    \begin{align*}
      \textstyle\prebigoplus_{\sS}(\preLambda^k\sheaf E) \to \preLambda^k(\textstyle\prebigoplus_{\sS}\sheaf E)
    \end{align*}
    which identifies the summand \(\p{s}{\preLambda^k\sheaf E}\) on the left with \(\preLambda^k(\p{s}{\sheaf E})\) on the right induces a well-defined morphism
    \begin{align*}
      \sS\pretimes_{\sG}(\preLambda^k\sheaf E) \xrightarrow{\hat\lambda} \preLambda^k(\sS\pretimes_{\sG}\sheaf E).
    \end{align*}
    Secondly, we claim that \(\hat\lambda\) is locally an isomorphism.  For this, we only need to observe that over any \(U\) such that \(\sS(U)\neq \emptyset\), we have a commutative triangle
    \[\xymatrix{
      {\left(\sS\pretimes_{\sG}(\preLambda^k\sheaf E)\right)(U)}\ar[dr]_{\cong} \ar[rr]^{\hat\lambda}&& {\left(\preLambda^k(\sS\pretimes_{\sG}\sheaf E)\right)(U)}\ar[dl]^{\cong}\\
      & {(\preLambda^k\sheaf E)(U)}
    }\]
    where the diagonal arrows are induced by the isomorphisms of \Cref{lem:local-iso}.

    For dualization, one of the sheafification morphisms goes in the wrong direction, so the argument is slightly different.  Again, we first construct a morphism of presheaves \(\hat\eta\colon\sS\pretimes_{\sG}\sheaf E^\dual \to (\sS\pretimes_{\sG}\sheaf E)^\dual\).
    Over opens \(U\) such that \(\sS(U)=\emptyset\), the left-hand side is zero, so we take the zero morphism.
    Over opens \(U\) with \(\sS(U)\neq\emptyset\), we define
    \begin{align*}
      (\sS\pretimes_{\sG}\sheaf E^\dual)(U) &\xrightarrow{\hat\eta} (\sS\pretimes_{\sG}\sheaf E)^\dual(U)\\
      \p{s}{\phi} \quad &\mapsto \left(\p{s.g}{v} \mapsto \phi(g.v)\right)
    \end{align*}
    Over these \(U\) with \(\sS(U)\neq\emptyset\), the morphism is in fact an isomorphism. To define a local inverse, pick an arbitrary \(s\in\sS(U)\), and send \(\psi\) on the right-hand side to \(\p{s}{v\mapsto \psi(\p{s}{v})}\) on the left.

    Finally, given the morphism \(\hat\eta\), we consider the following square in which \(\alpha\) and \(\beta\) are sheafification morphisms:
    \[\xymatrix{
      {\sS\pretimes_{\sG}\sheaf E^\dual}  \ar[r]^{\hat\eta}\ar[d]_{\alpha}\ar@{-->}[rd] & {(\sS\pretimes_{\sG}\sheaf E)^\dual} \\
      { \sS\times_{\sG}\sheaf E^\dual} \ar@{-->}[r]_{\eta} & {(\sS\times_{\sG}\sheaf E)^\dual} \ar[u]_{\beta^\dual}
    }\]
    By \Cref{lem:local-iso}, both \(\alpha\) and \(\beta\) are locally isomorphisms, and it follows that \(\beta^\dual\) is likewise locally an isomorphism.  The diagonal morphism is defined as follows:  over any \(U\) with \(\sS(U)=\emptyset\), it is the zero morphism, and over all other \(U\), it is the composition of \(\hat\eta\) with the (local) inverse of \(\beta^\vee\). Thus, the dotted diagonal is a factorization of \(\hat\eta\) over \((\sS\times_{\sG}\sheaf E)^\dual\).  The latter being a sheaf, this factorization must further factor through \(\sS\times_G\sheaf E^\dual\).  We thus obtain the horizontal morphism of sheaves \(\eta\) which, being a locally an isomorphism, must be an isomorphism.\qedhere
  \end{asparaenum}
\end{proof}

\subsubsection*{Twisting symmetric bundles}
Recall that a duality functor is a functor between categories with dualities
\[F\colon (\cat A, \vee, \omega) \to (\cat B, \vee, \omega)\]
together with a natural isomorphism
\(\eta\colon F(-^\vee) \xrightarrow{\;\cong\;} F(-)^\vee\)
such that
\[\xymatrix{
  & {FA} \ar[dl]_{F\omega_A} \ar[dr]^{\omega_{FA}} & \\
  {F({A^\vee}^\vee)} \ar[r]_{\eta_{A^\vee}} & {F(A^\vee)^\vee} & \ar[l]^{\eta_A^\vee} {{(FA)^\vee}^\vee}
}\]
commutes \cite{Balmer:Handbook}*{Def.~1.1.15}.
Such a functor induces a functor \(F_\sym\) on the category of symmetric spaces over \(\cat A\): it sends a symmetric space \((A,\alpha)\) over \(\cat A\) to the symmetric space \((FA,\eta_AF\alpha)\) over \(\cat B\). Moreover, we have isometries \(H(FA)\cong F_\sym(HA)\), where \(H\) denotes the hyperbolic functor.  We will sometimes simply write \(F\) for \(F_\sym\).

Now consider the functor \(\sS\times_{\sG}-\).  By \Cref{lem:local-iso}, we can restrict \(\sS\times_{\sG}-\) to a functor from \(\sG\)-equivariant vector bundles to vector bundles over \(X\).  Moreover, by \Cref{lem:basic-properties}, we have a natural isomorphism \(\eta\colon \sS\times_{\sG}(-^\dual)\cong(\sS\times_{\sG}(-))^\dual\).   The commutativity of the triangle in the definition of a duality functor is easily checked, so we deduce:
\begin{lem}
  \((\sS\times_{\sG}-, \eta)\) is a duality functor \(\sG\cat Vec(X) \to  \cat Vec(X)\).
\end{lem}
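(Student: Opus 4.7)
The plan is to verify the one remaining condition of a duality functor: the commutativity of the double-duality triangle. The functor $F := \sS\times_\sG -$ has already been shown in \Cref{lem:local-iso} to restrict to a functor $\sG\cat{Vec}(X)\to\cat{Vec}(X)$, and the natural isomorphism $\eta\colon F(-^\dual) \xrightarrow{\cong} F(-)^\dual$ has been constructed in \Cref{lem:basic-properties}~\textit{(iii)}. What remains to be checked is that for every $\sG$-equivariant vector bundle $\sheaf{E}$, the two compositions
\[
F\sheaf{E} \xrightarrow{F\omega_{\sheaf{E}}} F(\sheaf{E}^{\dual\dual}) \xrightarrow{\eta_{\sheaf{E}^\dual}} F(\sheaf{E}^\dual)^\dual
\quad\text{and}\quad
F\sheaf{E} \xrightarrow{\omega_{F\sheaf{E}}} (F\sheaf{E})^{\dual\dual} \xrightarrow{\eta_{\sheaf{E}}^\dual} F(\sheaf{E}^\dual)^\dual
\]
agree.

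Since both sides are morphisms of sheaves of $\O$-modules on $X$, I would check equality locally on a cover that splits $\sS$. On any open $(U\to X)$ with $\sS(U)\neq\emptyset$, fix a section $s\in\sS(U)$. The proof of \Cref{lem:local-iso} then provides, for every $\sG$-equivariant sheaf $\sheaf{F}$, a natural isomorphism $\phi_s^{\sheaf{F}}\colon F\sheaf{F}|_U \xrightarrow{\cong} \sheaf{F}|_U$ sending $\p{s.g}{v}$ to $g.v$. These trivializations transport both compositions to a pair of morphisms $\sheaf{E}|_U \to \sheaf{E}^{\dual\dual}|_U$, and the task reduces to the equality of these transports.

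Unravelling the construction of $\hat\eta$ in the proof of \Cref{lem:basic-properties}~\textit{(iii)}, one sees that under the trivializations $\phi_s^{\sheaf{E}}$ and $\phi_s^{\sheaf{E}^\dual}$ the map $\eta_{\sheaf{E}}$ becomes the identity on $\sheaf{E}^\dual|_U$; similarly $\eta_{\sheaf{E}^\dual}$ becomes the identity on $\sheaf{E}^{\dual\dual}|_U$. By naturality of the family $\phi_s^{-}$, the transport of $F(\omega_{\sheaf{E}})$ is the classical double-dual morphism $\omega_{\sheaf{E}|_U}$; and the transport of $\omega_{F\sheaf{E}}$ is also $\omega_{\sheaf{E}|_U}$, by the very definition of the double-dual map for an ordinary vector bundle. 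Both compositions therefore reduce to $\omega_{\sheaf{E}|_U}$, so the triangle commutes. The only delicate point is the bookkeeping required to track how $\eta$ behaves under the chosen trivialization by $s$ (in particular, the asymmetric role played by $s$ in the formula $\p{s}{\phi}\mapsto(\p{s.g}{v}\mapsto \phi(g.v))$); once this bookkeeping is done, the claim is essentially tautological, and I do not expect a serious obstacle.
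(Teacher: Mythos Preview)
Your proposal is correct and follows exactly the approach the paper has in mind: the paper states only that ``the commutativity of the triangle in the definition of a duality functor is easily checked,'' and your local verification via the trivializations $\phi_s^{-}$ from \Cref{lem:local-iso}, together with the explicit formula for $\hat\eta$ from the proof of \Cref{lem:basic-properties}, is precisely the natural way to cash out that phrase. There is nothing to add.
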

For any symmetric vector bundle \((\sheaf E,\eps)\) with an \(\O\)-linear left \(\sG\)-action, we can now define
\[
\sS\times_{\sG}(\sheaf E,\eps):=(\sS\times_{\sG}-)_\sym (\sheaf E,\eps).
\]
No compatibility of the \(\sG\)-action with the symmetric structure is required for this definition, but we will insist in the following that \(\sG\) acts via isometries:
\begin{lem}\label{lem:local-isometry}
  If \(\sG\) acts on \(\sheaf E\) via isometries, then \(\sS\times_{\sG}(\sheaf E,\eps)\) is locally \emph{isometric} to \((\sheaf E,\eps)\).
\end{lem}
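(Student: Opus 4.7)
The plan is to refine the local isomorphism provided by \Cref{lem:local-iso} into a local isometry, using the hypothesis that \(\sG\) acts via isometries in an essential way.

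First I would fix a cover \(\{U_i\to X\}_i\) that splits \(\sS\) and a section \(s\in\sS(U_i)\) on each \(U_i\). By the proof of \Cref{lem:local-iso}, such a choice yields an \(\O\)-linear isomorphism
\[
\phi_s\colon (\sS\times_{\sG}\sheaf E)_{|U_i} \xrightarrow{\;\cong\;} \sheaf E_{|U_i},\qquad \p{s.g}{v}\mapsto g.v.
\]
The task is then to verify that \(\phi_s\) carries the twisted form \(\eta_{\sheaf E}\circ(\sS\times_{\sG}\eps)\) to \(\eps\).

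The hypothesis that \(\sG\) acts via isometries amounts precisely to saying that \(\eps\colon \sheaf E\to\sheaf E^\dual\) is \(\sG\)-equivariant, so that \(\sS\times_{\sG}\eps\) is a well-defined morphism of twisted sheaves. Granted this, the compatibility of \(\phi_s\) with the symmetric forms reduces to a direct presheaf-level calculation. Using the explicit formula for \(\hat\eta\) from the proof of \Cref{lem:basic-properties}, one finds
\[
\bigl(\eta_{\sheaf E}\circ(\sS\times_{\sG}\eps)\bigr)(\p{s}{v})(\p{s}{w}) = \hat\eta(\p{s}{\eps(v)})(\p{s}{w}) = \eps(v)(w),
\]
which equals \(\eps(\phi_s(\p{s}{v}))(\phi_s(\p{s}{w}))\). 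Since every section of \((\sS\times_{\sG}\sheaf E)_{|U_i}\) can be written as \(\p{s}{v}\) for some \(v\) (using the relation \(\p{s.g}{v} = \p{s}{g.v}\) in the quotient), this suffices.

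The main obstacle is essentially bookkeeping: the construction of \(\eta\) in \Cref{lem:basic-properties} proceeds via sheafification, so one has to move between \(\sS\pretimes_{\sG}-\) and \(\sS\times_{\sG}-\). However, \Cref{lem:local-iso}~{\it (iii)} guarantees that over opens \(U\) with \(\sS(U)\neq\emptyset\) the canonical morphism \(\sS\pretimes_{\sG}\sheaf E\to\sS\times_{\sG}\sheaf E\) is an isomorphism, so the entire verification takes place at the presheaf level where the formulas are explicit. As a sanity check, two choices \(s\) and \(s'=s.g\) yield local isomorphisms related by \(\phi_{s'} = \phi_s\circ(g\cdot-)\), so the fact that \(\sG\) acts by isometries further ensures that the whole family of local trivializations consists of isometries, independently of choices.
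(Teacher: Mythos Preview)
Your proposal is correct and takes essentially the same approach as the paper: the paper's entire argument is the single sentence ``More precisely, the local isomorphisms of \Cref{lem:local-iso} are isometries in this case,'' and your computation is precisely the verification of that assertion using the explicit formulas for \(\phi_s\) and \(\hat\eta\) already developed. You have simply supplied the details the paper leaves implicit.
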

More precisely, the local isomorphisms of \Cref{lem:local-iso} are isometries in this case.
Moreover, the natural isomorphisms \(\sigma\), \(\theta\) and \(\lambda\) of Lemma~\ref{lem:basic-properties} respect the symmetric structures:
\begin{lem}\label{lem:basic-isometries}
 For symmetric vector bundles \((\sheaf E,\eps)\) and \((\sheaf F,\phi)\) on which \(\sG\) acts through isometries, \(\sigma\), \(\theta\) and \(\lambda\) respect the induced symmetries.
\end{lem}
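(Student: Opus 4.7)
The plan is to reduce the assertion to a purely local check. Being an isometry is a local property on $X$, so it suffices to verify that $\sigma$, $\theta$, $\lambda$ respect the symmetric structures over the opens of some cover. The natural choice is a cover $\{U_i\to X\}_i$ on which $\sS$ has sections, \ie one that splits $\sS$.

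Fix such a $U = U_i$ and an element $s\in \sS(U)$. By \Cref{lem:local-iso}, $s$ yields natural isomorphisms $\tau_{\sheaf E}\colon (\sS\times_\sG \sheaf E)_{|U} \cong \sheaf E_{|U}$ for each of the $\sG$-equivariant vector bundles in play, \ie for $\sheaf E$, $\sheaf F$, $\sheaf E\oplus\sheaf F$, $\sheaf E\tensor \sheaf F$, $\Lambda^k \sheaf E$, and their duals; by \Cref{lem:local-isometry}, all of these $\tau$'s are in fact isometries onto the original symmetric bundles restricted to $U$. Moreover, the construction of $\sigma$, $\theta$ and $\lambda$ in the proof of \Cref{lem:basic-properties} was arranged so that, under the identifications $\tau$, each of them specialises to the \emph{canonical} isomorphism on the corresponding untwisted bundle, \ie to the identity of $(\sheaf E\oplus \sheaf F)_{|U}$, $(\sheaf E\tensor \sheaf F)_{|U}$, or $\Lambda^k \sheaf E_{|U}$; this is precisely the content of the commutative triangles appearing in that proof. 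The identity trivially respects the symmetric structures, so $\sigma$, $\theta$ and $\lambda$ restrict to isometries over $U$, and the assertion follows by gluing.

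The only slightly delicate point concerns $\lambda$, whose target symmetric structure on $\Lambda^k(\sS\times_\sG\sheaf E)$ is built not only from $\sS\times_\sG \eps$ but also from the duality isomorphism $\eta_{\sS\times_\sG \sheaf E}$ of Step~1 in the proof of \Cref{prop:lambda-on-GW}, composed with the $\eta$ of \Cref{lem:basic-properties}. However, $\eta$ itself was constructed from the same local picture and therefore respects the local identifications $\tau$; once this compatibility is recorded (one additional triangle, checked in exactly the same way as those in the proof of \Cref{lem:basic-properties}), the reduction above applies unchanged. No essential new obstacle arises: the main ``work'' is organisational and has already been done in \Cref{lem:local-iso,lem:basic-properties,lem:local-isometry}, so what remains is bookkeeping.
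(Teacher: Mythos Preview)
Your argument is correct and in spirit the same as the paper's: both reduce to a local check over opens where \(\sS\) has a section, using that the isomorphisms \(\sigma,\theta,\lambda\) and the duality compatibility \(\eta\) all become the canonical identifications under the trivialisations \(\tau\) of \Cref{lem:local-iso}. The paper organises this slightly differently---it writes out the relevant commutative squares explicitly, uses the (global) naturality of \(\theta\) and \(\lambda\) for the upper halves, and only checks the lower halves (the compatibility of \(\eta\) with \(\theta\), resp.\ \(\lambda\)) locally---whereas you fold everything into a single local step by observing that both domain and codomain trivialise to the same symmetric bundle and the map becomes the identity. One small remark: the ``slightly delicate point'' you isolate for \(\lambda\) is not special to \(\lambda\); the symmetric structure on \((\sS\times_\sG\sheaf E)\otimes(\sS\times_\sG\sheaf F)\) also involves \(\eta\), so the same extra triangle is implicitly needed for \(\theta\) as well (and is handled identically).
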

\begin{proof}
  Temporarily writing \(F\) for the functor \(\sS\times_{\sG}-\),  checking the claim for \(\theta\) amounts to the following:
  For symmetric bundles \((\sheaf E,\eps)\) and \((\sheaf F,\phi)\), the isomorphism \(\theta_{\sheaf E,\sheaf F}\) is an isometry from \(F(\sheaf E\tensor \sheaf F)\) to \(F\sheaf E\tensor F\sheaf F\) with respect to the induced symmetries if and only if the outer square of the following diagram commutes:
  \[\xymatrix{
    {F(\sheaf E\tensor \sheaf F)} \ar[d]^{F(\eps\tensor\phi)} \ar[rr]^{\theta_{\sheaf E,\sheaf F}}
    &&{F\sheaf E\tensor F\sheaf F} \ar[d]^{F\eps\tensor F\phi} \\
    {F(\sheaf E^\vee\tensor \sheaf F^\vee)} \ar[d]  \ar[rr]^{\theta_{\sheaf E^\vee,\sheaf F^\vee}}
    &&{F(\sheaf E^\vee)\tensor F(\sheaf F^\vee)} \ar[d]^{\eta_\sheaf E\tensor\eta_\sheaf F} \\
    {F((\sheaf E\tensor \sheaf F)^\vee)}  \ar[d]^{\eta_{\sheaf E\tensor \sheaf F}}
    && {(F\sheaf E)^\vee\tensor (F\sheaf F)^\vee} \ar[d]     \\
    {(F(\sheaf E\tensor \sheaf F))^\vee}
    &&\ar[ll]^{\theta^\vee_{\sheaf E,\sheaf F}} {(F\sheaf E\tensor F\sheaf F)^\vee}
  }\]

  Similarly, for a symmetric vector bundle \((\sheaf E,\eps)\), the isomorphism \(\lambda_{\sheaf E}\) is an isometry from \(F(\Lambda^k\sheaf E)\) to \(\Lambda^k(F\sheaf E)\) if and only if the outer square of the following diagram commutes:
  \[\xymatrix{
    {F\Lambda^k\sheaf E} \ar[d]^{F\Lambda^k\eps} \ar[rr]^{\lambda_{\sheaf E}}
    && {\Lambda^k F\sheaf E} \ar[d]^{\Lambda^k F\eps}  \\
    {F\Lambda^k(\sheaf E^\vee)} \ar[d] \ar[rr]^{\lambda_{\sheaf E^\vee}}
    && {\Lambda^kF(\sheaf E^\vee)} \ar[d]^{\Lambda^k\eta_\sheaf E} \\
    {F((\Lambda^k\sheaf E)^\vee)}  \ar[d]^{\eta_{\Lambda^k\sheaf E}}
    && {\Lambda^k((F\sheaf E)^\vee)} \ar[d] \\
    {(F\Lambda^k\sheaf E)^\vee}
    &&\ar[ll]^{\lambda^\vee_{\sheaf E}} {(\Lambda^k F\sheaf E)^\vee}
  }\]

  In both cases, we already know that the the upper square commutes for all \(\sheaf E\) and \(\sheaf F\), by naturality of \(\theta\) and \(\lambda\). So it suffices to verify that the lower  square commutes. This can be checked locally, and follows easily from the descriptions of \(\eta\), \(\theta\) and \(\lambda\) given in the proof of \Cref{lem:local-iso}.
\end{proof}

\subsubsection*{Proofs of the statements}
\Cref{lem:basic-properties,lem:basic-isometries} immediately imply:
\begin{cor}\label{lem:K2GW2} Let \(\pi\colon X\to \Spec(k)\) be a scheme over some field \(k\).  For any algebraic group scheme \(G\) over \(k\), and for any \(G\)-torsor \(\sS\), the maps
  \begin{align*}
    \K(\Rep_kG) &\to \K(X)    &  \GW(\Rep_kG)&\to \GW(X)\\
    V &\mapsto \sS\times_G \pi^*V   &      (V,\nu) &\mapsto \sS\times_{G}\pi^*(V,\nu)
  \end{align*}
  are \(\lambda\)-morphisms.
\end{cor}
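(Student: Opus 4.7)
The plan is to reduce the corollary to the naturality and compatibility statements already proved in Lemmas~\ref{lem:basic-properties} and~\ref{lem:basic-isometries}. I would factor the twisting construction as a composition of two functors: first the pullback $\pi^*$ from $\Rep_k G$ to $G$-equivariant vector bundles over $X$ (sending a representation $V$ to the trivial equivariant bundle $V\otimes_k \O_X$, with the obvious symmetry in the symmetric case), and second the twisting functor $\sS\pretimes_{\sG}-$ of the preceding subsection. Both are exact duality functors: exactness of $\pi^*$ on trivial bundles is tautological, exactness of $\sS\pretimes_{\sG}-$ is Lemma~\ref{lem:basic-properties}(i), and the duality structure on the twisting functor was established just before the corollary. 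This already makes the $\K$-theoretic map well-defined, and for the $\GW$-version it implies that hyperbolic and, by an exact sequence argument, metabolic symmetric representations go to metabolic symmetric vector bundles — the hyperbolic case being the isomorphism $\sS\times_{\sG}H(V)\cong H(\sS\times_{\sG}V)$ extracted from $\sigma$ and $\eta$ together with their compatibility with the symmetric structure (Lemma~\ref{lem:basic-isometries}).

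Once well-definedness is established, additivity of the induced maps on $\K$ and $\GW$ is immediate from $\sigma$ (as an isometric isomorphism in the symmetric case), multiplicativity follows from $\theta$, and commutation with the exterior powers — that is, the $\lambda$-morphism property in the traditional formulation — follows from $\lambda$. In each case it is crucial that these natural isomorphisms are upgraded to isometries of symmetric bundles in the $\GW$-setting, which is exactly the content of Lemma~\ref{lem:basic-isometries}.

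The substantive content of the corollary has therefore already been carried out in the two preceding lemmas; what remains is assembly. If I had to single out the one step requiring the most care, it is the verification that the twisting functor takes metabolic objects to metabolic objects — it is here that one really uses both the exactness of $\sS\pretimes_{\sG}-$ and the symmetric-structure compatibility of $\sigma$ and $\eta$ simultaneously. Every other step is essentially a transcription of the relevant isomorphism from Lemma~\ref{lem:basic-properties}/\ref{lem:basic-isometries} into an equality of classes in $\K(X)$ or $\GW(X)$.
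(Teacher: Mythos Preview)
Your proposal is correct and follows exactly the route the paper intends: the paper's own proof is the single line ``Lemmas~\ref{lem:basic-properties} and~\ref{lem:basic-isometries} immediately imply'', and you have simply unpacked what that sentence means---well-definedness via exactness and the duality-functor structure, additivity via $\sigma$, multiplicativity via $\theta$, and compatibility with $\lambda^i$ via $\lambda$, with Lemma~\ref{lem:basic-isometries} supplying the isometry upgrades needed on the $\GW$-side. One small notational slip: you write $\sS\pretimes_{\sG}-$ where the relevant functor (and Lemma~\ref{lem:basic-properties}(i)) is the sheafified $\sS\times_{\sG}-$.
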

This proves statements (K2) and (GW2).
It remains to prove (K3) and (GW3), which we now state in a more detailed form.
\begin{prop}\label{lem:K3}
  Let \(V_n\) be the standard representation of \(\GL{n}\).
  \begin{compactenum}[(a)]
  \item Any vector bundle \(\sheaf E\) is isomorphic to \(\sS\times_{\GL{n}}\pi^*V_n\) for some Zariski \(\GL{n}\)-torsor \(\sS\).
  \item For any two vector bundles \(\sheaf E\) and \(\sheaf F\), there exists a Zariski \(\GL{n}\times\GL{m}\)-torsor \(\sS\) such that
    \begin{align*}
      \sheaf E &\cong \sS\times_{\GL{n}\times\GL{m}} \pi^* V_n\\
      \sheaf F&\cong \sS\times_{\GL{n}\times\GL{m}} \pi^* V_m
    \end{align*}
    Here \(\GL{m}\) is supposed to act trivially on \(V_n\),\\
    and \(\GL{n}\) is supposed to act trivially on \(V_m\).
  \end{compactenum}
\end{prop}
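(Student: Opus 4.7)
The plan is to invoke the standard dictionary between rank-$n$ vector bundles and $\GL{n}$-torsors: associate to each bundle its frame torsor, and recover the bundle through the construction of \Cref{def1}.

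For part (a), given a rank-$n$ vector bundle $\sheaf{E}$ on $X$, let $\sS$ be the Zariski sheaf
\[
\sS(U) := \mathrm{Isom}_{\O_U}\bigl(\pi^*V_n|_U,\,\sheaf{E}|_U\bigr),
\]
equipped with the right $\GL{n}$-action by precomposition. Zariski-local freeness of $\sheaf{E}$ provides a cover on which $\sS$ has sections, and any two frames over a fixed open differ by a unique automorphism of $\pi^*V_n$, so $\sS$ is a Zariski $\GL{n}$-torsor. Evaluation of frames on vectors yields a $\GL{n}$-invariant morphism of sheaves $\sS \times \pi^*V_n \to \sheaf{E}$, which factors through $\sS \times_{\GL{n}} \pi^*V_n$. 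To see that the induced map is an isomorphism, I would work locally on a Zariski open $U_i$ with a section $s_i \in \sS(U_i)$: by the proof of \Cref{lem:local-iso}, the restriction of $\sS \times_{\GL{n}} \pi^*V_n$ to $U_i$ is canonically identified with $\pi^*V_n|_{U_i}$ via $[s_i.g,\,v] \mapsto g.v$, and under this identification the evaluation map is precisely the isomorphism $s_i$.

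For part (b), let $\sS_\sheaf{E}$ and $\sS_\sheaf{F}$ be the torsors constructed in (a), and set $\sS := \sS_\sheaf{E} \times_X \sS_\sheaf{F}$ with the componentwise right action of $\GL{n} \times \GL{m}$. This is a Zariski torsor, split on any open trivializing both bundles. The evaluation
\[
\sS \times \pi^*V_n \to \sheaf{E}, \qquad \bigl((\phi,\psi),v\bigr) \mapsto \phi(v),
\]
is $(\GL{n}\times\GL{m})$-invariant precisely because $\GL{m}$ acts trivially on $\pi^*V_n$, so it descends to a morphism $\sS \times_{\GL{n}\times\GL{m}} \pi^*V_n \to \sheaf{E}$; the same local argument as in (a) shows that this is an isomorphism (on an open carrying a section $(s^E_i,s^F_i)$ of $\sS$, the evaluation restricts to $s^E_i$ after the standard local identification). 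The construction for $\sheaf{F}$ is symmetric.

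No deep obstacle arises; the content is the classical equivalence between rank-$n$ vector bundles and $\GL{n}$-torsors. The only real care needed is bookkeeping: one must keep the two distinct $\GL{n}$-actions separate (the standard action on $V_n$ versus precomposition on $\sS$), and verify that the evaluation map indeed factors through the sheafified cokernel of \Cref{def1} rather than merely through its presheaf version. Both checks trivialize on any open where $\sS$ has a section, so everything is handled uniformly by \Cref{lem:local-iso}.
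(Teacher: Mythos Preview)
Your proposal is correct and follows essentially the same route as the paper: the torsor is the sheaf of isomorphisms from the trivial bundle to \(\sheaf E\), the comparison map is evaluation, and it is checked locally using \Cref{lem:local-iso}. For part~(b) the paper packages your local argument as the abstract observation that \((\sS\times\sS')\times_{\sG\times\sG'}\sheaf E \cong \sS\times_{\sG}\sheaf E\) whenever \(\sG'\) acts trivially, then applies (a); this is equivalent to what you wrote.
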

\begin{prop}\label{lem:GW3}
  Let \(X\) be a scheme over a field \(k\) of characteristic not two.
  Let \((V_n,q_n)\) be the standard representation of \(\OO{n}\) over \(k\), equipped with its standard symmetric form.
  \begin{compactenum}[(a)]
  \item Any symmetric vector bundle \((\sheaf E,\eps)\) is isomorphic to \(\sS\times_{\OO{n}}\pi^*(V_n,q_n)\) for some ^^e9tale \(\OO{n}\)-torsor \(\sS\).
  \item For any two symmetric vector bundles \((\sheaf E,\eps)\) and \((\sheaf F,\phi)\), there exists an ^^e9tale \(\OO{n}\times\OO{m}\)-torsor \(\sS\) such that
    \begin{align*}
      (\sheaf E,\eps)  &\cong \sS\times_{\OO{n}\times\OO{m}} \pi^*(V_n,q_n)\\
      (\sheaf F,\phi) &\cong \sS\times_{\OO{n}\times\OO{m}} \pi^*(V_m,q_m)
    \end{align*}
    Here, \(\OO{m}\) is supposed to act trivially on \(V_n\),\\
    and \(\OO{n}\) is supposed to act trivially on \(V_m\).
  \end{compactenum}
\end{prop}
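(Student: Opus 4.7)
My plan is to realize $(\sheaf E, \eps)$ as the associated bundle of its sheaf of isometries from the trivial form, in direct analogy with the K-theoretic statement \Cref{lem:K3}. For part (a), consider the étale sheaf $\underline{\mathrm{Isom}}(\pi^*(V_n, q_n), (\sheaf E, \eps))$ whose sections over $(U \to X)$ are the isometries from $\pi^*(V_n, q_n)|_U$ to $(\sheaf E, \eps)|_U$; the group $\OO{n}$ acts on the right by precomposition. Simple transitivity is immediate, since any two isometries differ by an automorphism of $(V_n, q_n)|_U$, \ie an element of $\OO{n}(U)$. So the torsor property reduces to showing the sheaf is étale-locally non-empty—equivalently, that every non-degenerate symmetric bundle of rank $n$ is étale-locally isometric to $(V_n, q_n)$.

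For that local triviality, first shrink to a Zariski open $U$ on which $\sheaf E$ is free, so that $\eps$ is represented by a non-degenerate symmetric matrix. Over the local ring $\O_{X,x}$, which contains $\tfrac{1}{2}$, any such form diagonalizes to $\langle a_1, \dots, a_n\rangle$ with each $a_i$ a unit (\cite{Baeza}*{I.3.4}). After further Zariski shrinking of $U$, the diagonalization holds over $U$ itself, and the étale cover $U' := U[t_1, \dots, t_n]/(t_i^2 - a_i)$ (étale because $2$ and each $a_i$ are units) carries the isometry $\pi^*(V_n, q_n)|_{U'} \xrightarrow{\cong} (\sheaf E, \eps)|_{U'}$ given by $\diag{t_1, \dots, t_n}$. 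Hence $\sS := \underline{\mathrm{Isom}}(\pi^*(V_n, q_n), (\sheaf E, \eps))$ is an étale $\OO{n}$-torsor, and the evaluation map $\p{s}{v} \mapsto s(v)$ descends to a morphism $\sS \times_{\OO{n}} \pi^*(V_n, q_n) \to (\sheaf E, \eps)$ which, by \Cref{lem:local-iso} and \Cref{lem:local-isometry}, is locally and hence globally an isometry.

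Part (b) follows by taking the fibre product over $X$ of the two torsors produced by (a): $\sS := \underline{\mathrm{Isom}}(\pi^*(V_n, q_n), (\sheaf E, \eps)) \times_X \underline{\mathrm{Isom}}(\pi^*(V_m, q_m), (\sheaf F, \phi))$ is an étale $\OO{n} \times \OO{m}$-torsor, and the two projections recover $(\sheaf E, \eps)$ and $(\sheaf F, \phi)$ under the factor actions in which $\OO{m}$ is trivial on $V_n$ and $\OO{n}$ is trivial on $V_m$.

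The main obstacle is the étale-local triviality in the second paragraph: this is precisely where the étale topology (rather than Zariski) and the hypothesis $\characteristic k \neq 2$ become essential, since both are needed to extract square roots of the diagonal entries. By contrast, the analogous step in \Cref{lem:K3} needs only Zariski-local triviality of vector bundles, which is automatic.
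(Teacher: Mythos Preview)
Your proof is correct and follows essentially the same approach as the paper: take the sheaf of isometries \(\sheaf{I}so((\O^{\oplus n},q_n),(\sheaf E,\eps))\) with \(\OO{n}\) acting by precomposition, verify it is an \'etale torsor via \'etale-local triviality of symmetric bundles, and handle part~(b) by taking the product of the two torsors. The only difference is that you spell out the \'etale-local triviality argument explicitly (diagonalize Zariski-locally via \cite{Baeza}, then adjoin square roots of the diagonal entries), whereas the paper simply cites \cite{Hornbostel:Representability}*{3.6} for this fact.
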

\begin{proof}[Proof of \Cref{lem:K3}]\mbox{}\hfill
  \begin{asparaenum}[(a)]
  \item  Identify \(\pi^*V_n\) with \(\O^{\oplus n}\), and let \(\sS\) be the sheaf of isomorphisms \(\sS:=\sheaf Iso(\O^{\oplus n},\sheaf E)\) with \(\GL{n} = \sheaf Aut(\O^{\oplus n})\) acting by precomposition.
    This is a \(\GL{n}\)-torsor as \(\sheaf E\) is locally isomorphic to \(\O^{\oplus n}\).
    Moreover, we have a well-defined morphism
    \begin{align*}
      \ev\colon \sS\pretimes_{\sG}\O^{\oplus n}&\to \sheaf E\\
      \p{f}{v}&\mapsto f(v)
    \end{align*}
    which is locally an isomorphism:
    for any \(s\in\sheaf Iso(\O^{\oplus n}, \sheaf E)(V)\), the restriction \(\ev_{|V}\) factors as
    \[
    (\sS\pretimes_{\sG}\O^{\oplus n})_{|V} \xrightarrow{\cong} \O^{\oplus n}_{|V}  \xrightarrow[s]{\cong} \sheaf E_{|V},
    \]
    where the first arrow is the isomorphism \(\p{f}{v}\mapsto g_f(v)\) of \Cref{lem:local-iso} determined by \(s\).
  \item Suppose \(\sS\) is a \(\sG\)-torsor, \(\sS'\) is a \(\sG'\)-torsor, and \(\sheaf E\) is a sheaf of \(\O\)-modules with \(\O\)-linear actions by both \(\sG\) and  \(\sG'\).  Then if \(\sG'\) acts trivially,
    \[ (\sS\times\sS')\times_{\sG\times\sG'}\sheaf E \cong \sS\times_{\sG}\sheaf E.\]
    We can therefore take \(\sS:=\sheaf Iso(\O^{\oplus n},\sheaf E)\times\sheaf Iso(\O^{\oplus m},\sheaf F)\).
    \qedhere
  \end{asparaenum}
\end{proof}
\begin{proof}[Proof of \Cref{lem:GW3}]
  We have \(\OO{n} = \sheaf Aut(\O^{\oplus n},q_n)\).
  So let \(\sS\) be the sheaf of isometries \(\sS:=\sheaf Iso((\O^{\oplus n},q_n),(\sheaf E,\eps))\) with \(\OO{n}\) acting by precomposition.  This is an \(\OO{n}\)-torsor in the ^^e9tale topology since any symmetric vector bundle \((\sheaf E,\eps)\) is ^^e9tale locally isometric to \((\O^{\oplus n},q_n)\) (\eg \cite{Hornbostel:Representability}*{3.6}).
  The rest of the proof works exactly as in the non-symmetric case.
\end{proof}

\section{(No) splitting principle}\label{sec:no splitting}
The splitting principle in K-theory asserts that any vector bundle behaves like a sum of line bundles.  There are two incarnations:

{\bf The algebraic splitting principle: } For any positive element \(e\) of a \(\lambda\)-ring with positive structure \(A\), there exists an extension of \(\lambda\)-rings with positive structure \(A\hookrightarrow A_e\) such that \(e\) splits as a sum of line elements in \(A_e\).

{\bf The geometric splitting principle: } For any vector bundle \(\sheaf E\) over a scheme \(X\), there exists an \(X\)-scheme \(\pi\colon X_{\sheaf E}\to X\) such that the induced morphism \(\pi^*\colon \K(X)\hookrightarrow \K(X_{\sheaf E})\) is an extension of \(\lambda\)-rings with positive structure, and such that \(\pi^*\sheaf E\) splits as a sum of line bundles in \(\K(X_{\sheaf E})\).

Both incarnations are discussed in \cite{FultonLang}*{I, \S2}.  An {\bf extension} of a \(\lambda\)-ring with positive structure \(A\) is simply an injective \(\lambda\)-morphism to another \(\lambda\)-ring with positive structure  \(A\hookrightarrow A'\), compatible with the augmentation and such that \(A_{\geq 0}\) maps to \(A'_{\geq 0}\).

\subsubsection*{No splitting principle for GW}
For \(\GW(X)\), the analogue of the geometric splitting principle fails:
\begin{quote}
  Over any field of characteristic not two, there exists a (smooth, projective) scheme \(X\) and a symmetric vector bundle \((\sheaf E,\eps)\) over \(X\) such that there exists no \(X\)-scheme \(\pi\colon X_{(\sheaf E,\eps)}\to X\) for which the class of \(\pi^*(\sheaf E,\eps)\) in \(\GW(X_{(\sheaf E,\eps)})\) splits into a sum of symmetric line bundles.
\end{quote}
The natural analogue of the algebraic splitting principle could be formulated using the notion of a real \(\lambda\)-ring:

\begin{defn}
A \define{real \(\lambda\)-ring} is a \(\lambda\)-ring with positive structure \(A\) in which any line element squares to one.
\end{defn}
 This property is clearly satisfied by the \GrothendieckWitt ring \(\GW(X)\) of any scheme \(X\).
However, an algebraic splitting principle for real \(\lambda\)-rings fails likewise:
\begin{quote}
  There exist a real \(\lambda\)-ring \(A\) and a positive element \(e\in A\)  that does not split into a sum of line elements in any extension of real \(\lambda\)-rings \(A\hookrightarrow A_e\).
\end{quote}
The failure of both splitting principles is clear from the following simple counterexample:
\begin{lem}  Let \(\PP^2\) be the projective plane over some field \(k\) of characteristic not two.  Consider the element \(e:=H(\O(1))\in\GW(\PP^2)\).  There exists no extension of \(\lambda\)-rings \(\GW(\PP^2)\hookrightarrow A_e\) such that \(A_e\) is real and such that \(e\) splits as a sum of line elements in \(A_e\).
\end{lem}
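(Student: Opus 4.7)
My plan is to extract a numerical obstruction forced by the hypothetical splitting, push it along the forgetful map \(\GW(\PP^2) \to \K(\PP^2)\), and contradict it by an explicit K-theoretic calculation there.

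Suppose for contradiction that \(\GW(\PP^2) \hookrightarrow A_e\) is such an extension and that \(e = l_1 + l_2\) with line elements \(l_i \in A_e\). I will first argue that \(\psi^2(e) = 2\) already holds in \(\GW(\PP^2)\). Indeed, for any line element \(l\) of a pre-\(\lambda\)-ring with positive structure one has \(\lambda_t(l) = 1 + l t\) (all higher exterior powers vanishing on line elements), and the Newton identities then give \(\psi^n(l) = l^n\) for all \(n \geq 1\); reality of \(A_e\) yields \(\psi^2(l_i) = l_i^2 = 1\). Combined with the fact that \(\psi^2\) is additive in any pre-\(\lambda\)-ring, this gives \(\psi^2(e) = \psi^2(l_1) + \psi^2(l_2) = 2\) in \(A_e\), hence in \(\GW(\PP^2)\) by injectivity of the extension.

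Next, I would transport this identity to K-theory via the forgetful map \(F \colon \GW(\PP^2) \to \K(\PP^2)\). Because the pre-\(\lambda\)-structures on both sides are built from the exterior power functor, \(F\) is a pre-\(\lambda\)-morphism and therefore commutes with Adams operations, so \(\psi^2(F(e)) = 2\) must hold in \(\K(\PP^2)\).

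Finally, a direct computation in \(\K(\PP^2) \cong \ZZ[h]/(h^3)\) (with \(h := [\O(1)] - 1\)) contradicts this. Since \(F(e) = [\O(1)] + [\O(-1)]\) and Adams operations act by \(\psi^n[\O(m)] = [\O(nm)]\),
\[
  \psi^2(F(e)) = [\O(2)] + [\O(-2)] = (1+h)^2 + (1+h)^{-2} \equiv 2 + 4h^2 \pmod{h^3},
\]
using \((1+h)^{-1} \equiv 1 - h + h^2 \pmod{h^3}\). Since \(2 + 4h^2 \neq 2\) in \(\K(\PP^2)\), the desired contradiction follows. The only conceptually delicate point is the first step, namely recognising \(\psi^2(e) = 2\) as the right obstruction to exploit; the remaining two steps are routine.
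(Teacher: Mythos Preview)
Your proof is correct and shares the paper's central idea: the obstruction is that a sum of line elements in a real \(\lambda\)-ring must satisfy \(\psi^2(e) = \rank(e)\), so it suffices to show \(\psi^2(e) \neq 2\) in \(\GW(\PP^2)\). Where you diverge is in how you verify this inequality. The paper computes directly inside \(\GW(\PP^2)\), using the explicit description \(\GW(\PP^2) = \pi^*\GW(k) \oplus \ZZ e\) together with \(e^2 = -2\pi^*\langle 1,-1\rangle + 4e\) and \(\lambda^2(e) = \pi^*\langle -1\rangle\), to obtain \(\psi^2(e) = e^2 - 2\lambda^2(e) = -2\pi^*\langle 1,-1,-1\rangle + 4e \neq 2\). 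You instead push the question along the forgetful \(\lambda\)-morphism \(F\colon \GW(\PP^2)\to\K(\PP^2)\) and carry out the computation in \(\K(\PP^2) \cong \ZZ[h]/(h^3)\). Your route is arguably more economical, since it avoids any reliance on the ring structure of \(\GW(\PP^2)\) or on the identification of \(\lambda^2(H(\O(1)))\) there; the paper's route, on the other hand, stays entirely within the Grothendieck--Witt world and showcases the explicit structure of \(\GW(\PP^2)\) that is worked out later among the examples.
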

\begin{proof}
  For any element \(a\) in a real \(\lambda\)-ring that can be written as a sum of line elements, the Adams operations \(\psi^n\) are given by
  \[
  \psi^n(a) = \begin{cases}
    \rank(a) &\text{if \(n\) is even,}\\
    a        &\text{if \(n\) is odd.}
  \end{cases}
  \]
  However, for \(e:= H(\O(1))\in\GW(\PP^2)\) we have \(\psi^2(e) \neq 2\),
  so \(\psi^2(e)\) cannot be a sum of line bundles, neither in \(\GW(\PP^2)\) itself nor in any real extension.

  (Explicitly, \(\GW(\PP^2) = \pi^*\GW(k) \oplus \ZZ e\) with \(e^2=-2\pi^*\langle 1, -1 \rangle + 4e\)  (see \Cref{eg:Pr} below) and \(\lambda^2(e) = \pi^*\langle -1 \rangle\) (by an explicit calculation).  So
  \[
  \psi^2(e)
  = e^2 -2\lambda^2(e)
  = -2\pi^*\langle 1, -1, -1 \rangle + 4e
  ,
  \]
  which differs from \(2\), as claimed.)
 \end{proof}

\subsubsection*{A splitting principle for ^^e9tale cohomology}
Despite the negative result above, we do have a splitting principle for Stiefel-Whitney classes of symmetric bundles.  Let \(X\) be any scheme over \(\ZZ[\frac{1}{2}]\).
\begin{prop}\label{thm:etale-splitting}
  For any symmetric bundle \((\vb E, \eps)\) over \(X\) there exists a morphism
  \(
  \pi\colon X_{(\vb E, \eps)} \to X
  \)
  such that \(\pi^*(\vb E, \eps)\) splits as an orthogonal sum of symmetric line bundles over \(X_{(\vb E,\eps)}\) and such that \(\pi^*\) is injective on ^^e9tale cohomology with \(\ZZ/2\)-coefficients.
\end{prop}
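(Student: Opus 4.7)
The plan is to mimic the geometric splitting principle in K-theory by induction on $n = \rank(\sheaf E)$, splitting off one symmetric line summand at each stage.  The case $n = 1$ is trivial (take $\pi = \id_X$).  For the inductive step with $n \geq 2$, form the projective bundle $\pi_\PP\colon \PP(\sheaf E) \to X$ and let $\iota\colon Q \hookrightarrow \PP(\sheaf E)$ be the smooth quadric hypersurface cut out by $\eps$ viewed as a section of $\mathrm{Sym}^2(\sheaf E^\dual)$.  Set $X_1 := \PP(\sheaf E) \setminus Q$ and $\pi_1 = \pi_\PP \circ j$, where $j\colon X_1 \hookrightarrow \PP(\sheaf E)$ is the open immersion.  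Over $X_1$ the tautological line subbundle $\sheaf L := \O_{X_1}(-1) \hookrightarrow \pi_1^*\sheaf E$ is nowhere isotropic---this is precisely the condition defining the complement of $Q$---so we obtain an orthogonal decomposition $\pi_1^*(\sheaf E, \eps) \cong (\sheaf L, \eps|_{\sheaf L}) \perp (\sheaf L^\perp, \eps|_{\sheaf L^\perp})$ with $(\sheaf L^\perp, \eps|_{\sheaf L^\perp})$ a symmetric bundle of rank $n-1$ on $X_1$.  Applying the inductive hypothesis to this complement will furnish the desired morphism $\pi\colon X_{(\sheaf E,\eps)} \to X$ as a composition, once we verify injectivity on mod-$2$ étale cohomology at each stage.

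For the injectivity of $\pi_1^*\colon H^*_\et(X; \FF_2) \to H^*_\et(X_1; \FF_2)$, I would factor $\pi_1^* = j^* \circ \pi_\PP^*$.  The projective bundle formula identifies $H^*_\et(\PP(\sheaf E); \FF_2)$ with the free $H^*_\et(X; \FF_2)$-module on $1, h, h^2, \ldots, h^{n-1}$ with $h = c_1(\O_{\PP(\sheaf E)}(1))$, so $\pi_\PP^*$ is injective.  For $j^*$ I would invoke the Gysin long exact sequence
\[
\cdots \to H^{i-2}_\et(Q; \FF_2) \xrightarrow{\iota_*} H^i_\et(\PP(\sheaf E); \FF_2) \xrightarrow{j^*} H^i_\et(X_1; \FF_2) \to \cdots
\]
whose crucial feature is that the cycle class of $Q$ vanishes mod $2$: $[Q] = c_1(\O(2)) = 2h \equiv 0$ in $H^2_\et(\PP(\sheaf E); \FF_2)$, which implies $\iota_* \circ \iota^* = 0$ by the projection formula.

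The main obstacle is to verify that $\pi_\PP^*(H^*_\et(X; \FF_2)) \cap \im(\iota_*) = 0$ inside $H^*_\et(\PP(\sheaf E); \FF_2)$.  My plan for this is to suppose $\pi_\PP^*\alpha_0 = \iota_*\beta$ and apply the fiber-integration $\pi_{\PP *}$ after multiplying by successive powers of $h$.  Using $\pi_{\PP *}(h^k) = 0$ for $k < n-1$, $\pi_{\PP *}(h^{n-1}) = 1$, and the identity $\pi_{\PP *}(h^k\iota_*\beta) = \pi_{Q*}(h|_Q^k\beta)$ coming from the projection formula, we obtain the cascade of constraints $\pi_{Q*}(h|_Q^k\beta) = 0$ for $0 \leq k \leq n-2$ together with $\alpha_0 = \pi_{Q*}(h|_Q^{n-1}\beta)$.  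Since $Q \to X$ has relative dimension $n-2$, the fiberwise cohomology of $Q$ vanishes above degree $2(n-2)$, so $h|_Q^{n-1}$ lies in positive Leray filtration.  Combined with the projective-bundle relation restricted from $\PP(\sheaf E)$ to $Q$ and the earlier vanishings, a careful analysis of the Leray spectral sequence for $Q \to X$ should force $\alpha_0 = 0$.  The delicate and technical heart of the argument lies in controlling these higher Leray contributions.
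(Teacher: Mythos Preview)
Your geometric construction is exactly the one the paper uses: pass to the complement \(X_1 = \PP(\sheaf E)\setminus Q\) of the quadric, split off the tautological symmetric line \((\O(-1),\eps')\), and iterate.  The divergence is entirely in how you argue injectivity of \(\pi_1^*\) on mod-\(2\) ^^e9tale cohomology.

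The paper does not attempt a direct Gysin\slash fibre-integration argument.  Instead it quotes the Delzant--Laborde\slash Esnault--Kahn--Viehweg computation \cite{EKV}*{\S5}: setting \(w := w_1(\O(-1),\eps') \in H^1_\et(X_1,\ZZ/2)\), one has
\[
  H^*_\et(X_1,\ZZ/2)\;=\;\bigoplus_{i=0}^{n-1}\pi_1^*H^*_\et(X,\ZZ/2)\cdot w^i,
\]
a free \(H^*_\et(X,\ZZ/2)\)-module on \(1,w,\dots,w^{n-1}\).  Injectivity of \(\pi_1^*\) is then immediate.  The underlying mechanism is Leray--Hirsch: the fibre of \(\pi_1\) is \(\PP^{n-1}\setminus Q_{n-2}\), whose mod-\(2\) cohomology is \(\FF_2[w]/(w^n)\), and the global class \(w\) restricts to the generator on each fibre.

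Your approach, by contrast, tries to show directly that \(\im(\iota_*)\cap\im(\pi_\PP^*)=0\) via push-forward identities.  The first steps are fine and the observation \([Q]=2h\equiv 0\) is the right one, but the final paragraph has a genuine gap.  From \(\alpha_0 = \pi_{Q*}\bigl((h|_Q)^{n-1}\beta\bigr)\) together with \(\pi_{Q*}\bigl((h|_Q)^{k}\beta\bigr)=0\) for \(k\le n-2\), it does not follow that \(\alpha_0=0\) by the filtration argument you sketch: knowing that \((h|_Q)^{n-1}\) lies in Leray filtration \(F^1\) only gives \((h|_Q)^{n-1}\beta\in F^1\), whereas \(\pi_{Q*}\) vanishes on \(F^{i+1}\), not on \(F^1\), when \(\alpha_0\in H^i\).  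Moreover \(\iota_*\) is \emph{not} zero in general (for even-dimensional fibres the extra middle class on \(Q\) pushes forward nontrivially), so the Gysin sequence does not split and the bookkeeping becomes delicate.  Completing this line would essentially force you to reprove the EKV decomposition anyway.

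The fix is simple: replace your last paragraph by the Leray--Hirsch argument above (or just cite \cite{EKV}).  That gives injectivity in one line and makes the induction go through exactly as you set it up.
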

\begin{proof}
  Recall the geometric construction of higher Stiefel-Whitney classes of Delzant and Laborde, as explained for example in \cite{EKV}*{\S5}:  given a symmetric vector bundle \((\sheaf E,\eps)\) as above, the key idea is to consider the scheme of non-degenerate one-dimensional subspaces \(\pi\colon \PP_{\nondeg}(\sheaf E,\eps) \to X\), \ie the complement of the quadric in \(\PP(\sheaf E)\) defined by \(\eps\).  (This is an algebraic version of the projective bundle associated with a real vector bundle in topology; \cf \cite{Me:WCCV}*{Lem.~1.7}.) Let \(\O(-1)\) denote the restriction of the universal line bundle over \(\PP(\sheaf E)\) to \(\PP_{\nondeg}(\sheaf E,\eps)\).  This is a subbundle of \(\pi^*\sheaf E\), and by construction the restriction of \(\pi^*\eps\) to \(\O(-1)\) is non-degenerate.  Let \(w\) be the first Stiefel-Whitney class of this symmetric line bundle \(\O(-1)\). The ^^e9tale cohomology of \(\PP_{\nondeg}(\sheaf E, \eps)\) decomposes as
\[
   H^*_{\et}(\PP_{\nondeg}(\sheaf E,\eps),\ZZ/2)=\bigoplus_{i=0}^{r-1} \pi^*H^*_{\et}(X,\ZZ/2)\cdot w^i,
\]
and the higher Stiefel-Whitney classes of \((\sheaf E,\eps)\) can be defined as the coefficients of the equation expressing \(w^r\) as a linear combination of the smaller powers \(w^i\) in \(H^*_{\et}(\PP_{\nondeg}(\sheaf E,\eps),\ZZ/2)\).

We only need to note two facts from this construction:  Firstly, over \(\PP_{\nondeg}(\sheaf E,\eps)\) we have an orthogonal decomposition
\[
  \pi^*(\sheaf E,\eps) \cong (\O(-1),\epsilon') \perp (\sheaf E'',\eps''),
\]
where \(\sheaf E'' = \O(-1)^\perp\) and \(\eps'\) and \(\eps''\) are the restrictions of \(\pi^*\eps\).   Secondly, \(\pi\)  induces a monomorphism from the ^^e9tale cohomology of \(X\) to the ^^e9tale cohomology of \(\PP_{\nondeg}(\sheaf E,\eps)\).
So the \namecref{thm:etale-splitting} is proved by iterating this construction.
\end{proof}

\section{The $\gamma$-filtration on the Grothendieck-Witt ring}
From now on, we assume that \(X\) is connected.
As we have seen, \(\GW(X)\) is a pre-\(\lambda\)-ring with positive structure, and we can consider the associated \(\gamma\)-filtration \(\gammaF{i} \GW(X)\) of \(\GW(X)\).  The image of this filtration under the canonical epimorphism \(\GW(X)\twoheadrightarrow \W(X)\) will be denoted \(\gammaF{i} \W(X)\).  In particular, by definition,
\begin{alignat*}{7}
&\gammaF{1} \GW(X) &&= \clasF{1}\GW(X) &&:= \ker(\rank\colon \GW(X)\to \ZZ),\\
&\gammaF{1} \W(X) &&= \clasF{1}\W(X) &&:= \ker(\bar\rank\colon \W(X)\to \ZZ/2).
\end{alignat*}

For a field, or more generally for a connected semi-local ring \(R\), we also write \(\GI(R)\) and \(\I(R)\) instead of \(\clasF{1}\GW(R)\) and \(\clasF{1}\W(R)\), respectively.

\subsection{Comparison with the fundamental filtration}
\begin{prop}\label{prop:local-filtration}
  For any connected semi-local commutative ring \(R\) in which two is invertible, the \(\gamma\)-filtration on \(\GW(R)\) is the filtration by powers of the augmentation ideal \(\GI(R)\), and the induced filtration on \(\W(R)\) is the filtration by powers of the fundamental ideal \(I(R)\).
\end{prop}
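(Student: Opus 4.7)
The plan is to deduce this proposition directly from two results already in the paper: \Cref{GW-R-special} and \Cref{lem:gamma-filtration-generators}. Since \(R\) is connected and semi-local with \(2\) invertible, Baeza's theorem (as invoked in the proof of \Cref{GW-R-special}) guarantees that every symmetric vector bundle over \(R\) is an orthogonal sum of rank-one symmetric forms. In other words, every positive element of \(\GW(R)\) is a sum of line elements, which is exactly the hypothesis of the first assertion of \Cref{lem:gamma-filtration-generators}. Applying that lemma gives \(\gammaF{k}\GW(R) = (\gammaF{1}\GW(R))^k\), and by definition \(\gammaF{1}\GW(R) = \GI(R)\), proving the first claim.

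For the second claim, I would argue as follows. By definition \(\gammaF{k}\W(R)\) is the image of \(\gammaF{k}\GW(R) = \GI(R)^k\) under the canonical epimorphism \(\GW(R) \twoheadrightarrow \W(R)\). It therefore suffices to check that this surjection sends \(\GI(R)\) onto \(I(R)\): any even-rank class in \(\W(R)\) lifts to some \(x \in \GW(R)\) of even rank \(2n\), and then \(x - n \cdot H(R)\) lies in \(\GI(R)\) and has the same image in \(\W(R)\). Consequently the image of \(\GI(R)^k\) is \(I(R)^k\), as required.

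The argument is essentially immediate once the two cited results are in hand, so there is no real obstacle to overcome here; the proposition is best viewed as a sanity check that the \(\gamma\)-filtration recovers the expected fundamental filtration in the semi-local case, and as the base case to which the more global comparison results of the following sections will reduce.
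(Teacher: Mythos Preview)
Your proof is correct and follows essentially the same approach as the paper: both invoke Baeza's diagonalization (via the proof of \Cref{GW-R-special}) to ensure every positive element is a sum of line elements, then apply \Cref{lem:gamma-filtration-generators} to conclude \(\gammaF{k}\GW(R)=\GI(R)^k\), and finally pass to \(\W(R)\) by noting that the fundamental filtration on \(\W(R)\) is the image of that on \(\GW(R)\). Your argument is slightly more explicit on this last point, spelling out the lift via subtraction of hyperbolic planes, but the strategy is identical.
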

\begin{proof}
  As we have already noted in the proof of \Cref{GW-R-special}, all positive elements of the \GrothendieckWitt ring \(\GW(R)\) can be written as sums of line elements. Thus, the claim concerning \(\GW(R)\) is immediate from \Cref{lem:gamma-filtration-generators}.  Moreover, the fundamental filtration on \(\W(R)\) is the image of the fundamental filtration on \(\GW(R)\).
\end{proof}

\begin{rem}
  In the situation above, the projection \(\GW(R)\to\W(R)\) even induces isomorphisms \( \GI^n(R) \to \I^n(R)\), so that \(\gr^i_\gamma\GW(R) \cong \gr^i_\gamma\W(R)\) in degrees \(i>0\).  This fails for general schemes in place of \(R\) (see \Cref{sec:examples}).
\end{rem}

\begin{rem}
  It may seem more natural to define a filtration on \(\GW(X)\) starting with the kernel not of the rank morphism but of the rank reduced modulo two, as for example in \cite{Auel:Milnor}:
  \[
  \GI'(X) := \ker\big(\GW(X)\to H^0(X,\ZZ/2)\big)
  \]
  For connected \(X\), \(\GI'(X)\) is isomorphic to a direct sum of \(\GI(X)\) and a copy of \(\ZZ\) generated by the hyperbolic plane \(\HH\).
  In particular, \(\GI(X)\) and \(\GI'(X)\) have the same image in \(\W(X)\).
  However, even over a field, the filtration by powers of \(\GI'\) does \emph{not} yield the same graded ring as the filtration by powers of (\(\GI\) or) \(\I\).
  For example, for \(X=\Spec(\RR)\), we find:
 \begin{align*}
    \factor{\GI^n(\RR)}{\GI^{n+1}(\RR)} &\cong \ZZ/2 &&\quad(n>0)\\
    \factor{(\GI')^n(\RR)}{(\GI')^{n+1}(\RR)} &\cong \ZZ/2 \oplus \ZZ/2
  \end{align*}
  It is the filtration by powers of \(\GI\) that yields an associated graded ring isomorphic to \(H^*_{\et}(\RR, \ZZ/2)\) in positive degrees, not the filtration by powers of \(\GI'\).
\end{rem}

\subsection{Comparison with the classical filtration}
A common filtration on the Witt ring of a scheme is given by the kernels of the first two ^^e9tale Stiefel-Whitney classes \(w_1\) and \(w_2\) on the \GrothendieckWitt ring and of the induced classes \(\bar w_1\) and \(\bar w_2\) on the Witt ring:
\begin{alignat*}{7}
  &\clasF{2}\GW(X) &&:= \ker\left(\clasF{1}\GW(X) \xrightarrow{w_1} H^1_\et(X,\ZZ/2)\right) \\
  &\clasF{2}\W(X) &&:= \ker\left(\clasF{1}\W(X)\phantom{G}\xrightarrow{\bar w_1} H^1_\et(X,\ZZ/2)\right) \\\\
  &\clasF{3}\GW(X) &&:= \ker\left(\clasF{2}\GW(X)  \xrightarrow{w_2} H^2_\et(X,\ZZ/2)\right) \\
  &\clasF{3}\W(X) &&:= \ker\left(\clasF{2}\W(X)\phantom{G}\xrightarrow{\bar w_2}  H^2_\et(X,\ZZ/2)/\Pic(X)\right)
\end{alignat*}
\begin{prop}\label{comparison:F2}
  Let \(X\) be any connected scheme over a field of characteristic not two (or, more generally, any scheme such that the canonical pre-\(\lambda\)-structure on \(\GW(X)\) is a
 \(\lambda\)-structure). Then:
  \begin{align*}
    \gammaF{2} \GW(X) &= \clasF{2}\GW(X) \\
    \gammaF{2} \W(X) &= \clasF{2}\W(X)
  \end{align*}
\end{prop}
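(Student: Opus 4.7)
The plan is to identify $\gr^1_\gamma \GW(X) = \gammaF{1}\GW(X)/\gammaF{2}\GW(X)$ with $H^1_\et(X, \ZZ/2)$ via an isomorphism induced by $w_1$. This will yield $\gammaF{2}\GW(X) = \ker(w_1|_{\gammaF{1}\GW(X)}) = \clasF{2}\GW(X)$, and the Witt-ring statement will follow by a short diagram chase.

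Under the hypotheses of the proposition, \Cref{lem:graded-degree-1} applies, identifying $\gr^1_\gamma \GW(X)$ with the multiplicative group of line elements in $\GW(X)$ via $l \mapsto [l-1]$. The line elements are precisely the isometry classes of symmetric line bundles on $X$, and since $2$ is invertible the orthogonal group $\OO{1}$ coincides with $\mu_2 = \ZZ/2$ as an étale sheaf, so such bundles are classified by $H^1_\et(X, \ZZ/2)$ with classifying map $w_1$. Composing these two identifications gives a candidate isomorphism $\phi\colon \gr^1_\gamma \GW(X) \xrightarrow{\cong} H^1_\et(X, \ZZ/2)$, and the task reduces to checking that $\phi$ agrees with the map induced by $w_1$ on $\gammaF{1}\GW(X)$, equivalently that $w_1$ vanishes on $\gammaF{2}\GW(X)$.

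This vanishing is the main technical step. By \Cref{lem:gamma-filtration-generators}, $\gammaF{2}\GW(X)$ is generated by products $\prod_j \gamma^{i_j}(a_j)$ with $a_j \in \gammaF{1}\GW(X)$ and $\sum_j i_j \geq 2$, so two facts suffice: (i) the Whitney-type identity $w_1(x \cdot y) = \rank(y)w_1(x) + \rank(x)w_1(y)$ for symmetric bundles, which forces $w_1$ to vanish on products of two---and hence any number of---rank-zero elements; and (ii) the formula $\gamma_t(l - 1) = 1 + (l-1)t$ for any line element $l$, yielding $\gamma^i(l-1) = 0$ for $i \geq 2$. Identity (i) holds trivially on sums of symmetric line bundles and extends to all symmetric bundles via the étale splitting principle of \Cref{thm:etale-splitting}, using cohomological injectivity of the splitting morphism; identity (ii) is a direct calculation valid in any $\lambda$-ring. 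For a general generator $\gamma^i(a)$ with $a \in \gammaF{1}\GW(X)$ and $i \geq 2$, I would pull back along a morphism $\pi\colon X' \to X$ over which the symmetric bundles underlying $a$ split into orthogonal sums of symmetric line bundles (\Cref{thm:etale-splitting}); the multiplicativity $\gamma_t(x+y) = \gamma_t(x)\gamma_t(y)$ together with (ii) expresses $\gamma^i(\pi^* a)$ as a sum of products of at least two elements of $\gammaF{1}\GW(X')$, on which (i) gives $w_1 = 0$, and cohomological injectivity of $\pi^*$ transfers the vanishing back to $X$.

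For the Witt ring, $w_1$ vanishes on classes of hyperbolic bundles and thus descends to $\bar w_1$ on $\W(X)$. The inclusion $\gammaF{2}\W(X) \subseteq \clasF{2}\W(X)$ is immediate from the definitions. Conversely, any $x \in \clasF{2}\W(X)$ admits a lift $\tilde x \in \gammaF{1}\GW(X)$ with $w_1(\tilde x) = \bar w_1(x) = 0$; by what has already been shown, $\tilde x \in \gammaF{2}\GW(X)$, and therefore $x \in \gammaF{2}\W(X)$. The principal obstacle is the vanishing $w_1(\gammaF{2}\GW(X)) = 0$ addressed in the third paragraph; the remaining steps are formal consequences of $\lambda$-ring theory and the classification of symmetric line bundles.
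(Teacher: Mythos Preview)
Your argument is correct, but you are working harder than necessary. The paper's proof is a two-liner: the isomorphism of \Cref{lem:graded-degree-1} is not merely an abstract bijection between $\gr^1_\gamma\GW(X)$ and the group of line elements, it is \emph{induced by the determinant map} (this is the actual content of \cite{FultonLang}*{III, Thm~1.7}). Since the determinant of a symmetric bundle, composed with the classification of symmetric line bundles, is precisely $w_1$, the equality $\gammaF{2}\GW(X) = \ker(w_1|_{\gammaF{1}})$ is immediate. Your third paragraph---where you invoke the \'etale splitting principle (\Cref{thm:etale-splitting}) and the Whitney-type identity to show $w_1(\gammaF{2}) = 0$ by hand---reproves from scratch a fact already packaged inside \Cref{lem:graded-degree-1}. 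The paper reserves the splitting principle for the deeper \Cref{comparison:F2F3}, where the analogous statement for $w_2$ genuinely requires it.

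Your Witt-ring paragraph matches the paper's one-line observation that $\clasF{2}\GW(X)$ surjects onto $\clasF{2}\W(X)$; you have simply unpacked the diagram chase.
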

\begin{proof}
  The first identity is a consequence of \Cref{lem:graded-degree-1}:
  In our case, the group of line elements may be identified with \(H^1_\et(X,\ZZ/2)\); then the determinant
  \(
  \GW(X) \to H^1_\et(X,\ZZ/2)
  \)
  is precisely the first Stiefel-Whitney class \(w_1\). In particular, the kernel of the restriction of \(w_1\) to \(\clasF{1}\GW(X)\) is \(\gammaF{2} \GW(X)\), as claimed.
  For the second identity, it suffices to observe that \(\clasF{2}\GW(X)\) maps surjectively onto \(\clasF{2}\W(X)\).
\end{proof}

In order to analyse the relation of \(\gammaF{3} \GW(X)\) to \(\clasF{3}\GW(X)\), we need a few lemmas concerning products of ``reduced line elements'':
\begin{lem}\label{lem:gamma-calculation}
  Let \(u_1,\dots, u_l, v_1,\dots, v_l\) be line elements in a pre-\(\lambda\)-ring \(A\) with positive structure. Then
  \(\gamma^{k}\left(\textstyle\sum_i(u_i-v_i)\right) \)
  can be written as a linear combination of products
  \[
  (u_{i_1}-1)\cdots(u_{i_s}-1)(v_{j_1}-1)\cdots(v_{j_t}-1)
  \] with \(s+t=k\) factors.
\end{lem}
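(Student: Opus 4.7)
The plan is to compute the total $\gamma$-class $\gamma_t\bigl(\sum_i (u_i-v_i)\bigr)$ as a formal power series in $t$ and then read off the coefficient of $t^k$. The whole argument rests on one key identity: for any line element $l\in A$,
\[
\gamma_t(l-1) \;=\; 1 + (l-1)t,
\]
so in particular $\gamma^n(l-1)=0$ for all $n\geq 2$. This can be verified either by a direct computation using $\gamma_t(x) = \lambda_{t/(1-t)}(x)$ and $\lambda_t(l)=1+lt$, or, more cleanly, by noting that $\gamma^n(l-1)=\lambda^n(l+n-2)$ is the coefficient of $t^n$ in the polynomial $\lambda_t(l)\lambda_t(1)^{n-2}=(1+lt)(1+t)^{n-2}$, whose degree is $n-1$.

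Granted this identity, I would exploit that $\gamma_t\colon (A,+)\to(1+tA\llbracket t\rrbracket)^\times$ is a group homomorphism. Rewriting $\sum_i(u_i-v_i)=\sum_i(u_i-1)-\sum_i(v_i-1)$ and applying $\gamma_t$ yields
\[
\gamma_t\!\left(\textstyle\sum_i(u_i-v_i)\right)
\;=\;
\prod_{i}\bigl(1+(u_i-1)t\bigr)
\;\cdot\;
\prod_{i}\bigl(1+(v_i-1)t\bigr)^{-1}.
\]
The inverse factors expand as geometric series,
$\bigl(1+(v_i-1)t\bigr)^{-1}=\sum_{n\geq 0}(-1)^n(v_i-1)^n t^n$,
so the right-hand side is a well-defined element of $A\llbracket t\rrbracket$.

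Extracting the coefficient of $t^k$ now gives a $\ZZ$-linear combination of products of factors $(u_i-1)$ and $(v_j-1)$ whose total number is exactly $k$: the $(u_i-1)$-factors come from the finite polynomial product and so appear with multiplicity at most one, while the $(v_j-1)$-factors may appear with arbitrary multiplicity through the geometric series expansion. This is precisely the form of expression asserted by the lemma (which does not forbid repeated indices), so the proof concludes. The only non-trivial ingredient is the identity $\gamma_t(l-1)=1+(l-1)t$, which is where any potential obstacle lies; once that is in hand, the rest is just formal power series bookkeeping.
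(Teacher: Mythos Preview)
Your proof is correct and rests on the same key identity as the paper's, namely \(\gamma_t(l-1)=1+(l-1)t\) for a line element \(l\), together with the multiplicativity of \(\gamma_t\). The only difference is organizational: the paper carries out an induction on \(l\), computing \(\gamma^k(u-v)=\pm(v-1)^k\mp(u-1)(v-1)^{k-1}\) in the base case and invoking \(\gamma^k(x+y)=\sum_i\gamma^i(x)\gamma^{k-i}(y)\) in the induction step, whereas you package the same computation as a single generating-function expansion --- arguably the cleaner route.
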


\begin{proof}
  This is easily seen by induction over \(l\).
 For \(l=1\) and \(k=0\) the statement is trivial, while for \(l=1\) and \(k\geq 1\) we have
  \begin{align*}
    \gamma^{k}(u-v) &=\gamma^{k}((u-1)+(1-v)) \\
    &=\gamma^{0}(u-1)\gamma^{k}(1-v) + \gamma^{1}(u-1)\gamma^{k-1}(1-v)\\
    &=\pm(v-1)^k \mp (u-1)(v-1)^{k-1}
  \end{align*}
  For the induction step, we observe that every summand in
  \begin{align*}
    \gamma^{k}\left(\textstyle\sum_{i=1}^{l+1}u_i-v_i\right) &= \sum_{i=0}^k\gamma^{i}\left(\textstyle\sum_{i=1}^l u_i-v_i\right)\gamma^{k-i}(u_l-v_l)
  \end{align*}
  can be written as a linear combination of the required form.
\end{proof}

\begin{lem}\label{lem:w-calculation}
  Let \(X\) be a scheme over \(\ZZ[\frac{1}{2}]\), and let \(u_1, \dots, u_n \in \GW(X)\) be classes of symmetric line bundles with Stiefel-Whitney classes  \(w_1(u_i)=:\bar u_i\). Let \(\rho\) denote the product
  \[
  \rho := (u_1-1)\cdots(u_n-1).
  \]
  Then \(w_i(\rho) = 0\) for \(0 < i < 2^{n-1}\), and
  \[
  w_{2^{n-1}}(\rho)
  = \quad\prod_{\mathclap{\substack{1\leq i_1 < \dots < i_k \leq n\\\text{ with } k\text{ odd}}}}\quad ({\bar u}_{i_1} + \cdots + {\bar u}_{i_k})
  = \quad\sum_{\mathclap{\substack{r_1,\dots, r_n:\\2^{r_1}+\cdots+2^{r_n}= 2^{n-1}}}}\quad {\bar u}_1^{2^{r_1}}\cdots\cdot {\bar u}_n^{2^{r_n}}.
  \]
\end{lem}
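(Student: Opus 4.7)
My plan is to compute $w(\rho)$ directly by expanding $\rho$ in the basis of symmetric line bundles and using additivity of the total Stiefel-Whitney class, then reduce the lemma to a polynomial identity in $\FF_2[x_1,\ldots,x_n]$ which I prove via a divisibility argument.

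First expand
\[
\rho \;=\; \prod_{i=1}^{n}(u_i-1) \;=\; \sum_{S\subseteq\{1,\ldots,n\}} (-1)^{n-|S|}\, u_S,
\]
where $u_S:=\bigotimes_{i\in S} u_i$ is again a symmetric line bundle, with $w_1(u_S)=\sum_{i\in S}\bar u_i=:x_S$ (writing $x_i:=\bar u_i$). Since the total Stiefel-Whitney class is a homomorphism $(\GW(X),+)\to(1+H^{>0}_\et(X,\ZZ/2),\cdot)$ taking each line element $u$ to $1+w_1(u)$, this gives
\[
w(\rho) \;=\; \prod_{S\subseteq[n]} (1+x_S)^{(-1)^{n-|S|}}.
\]
Set $A:=\prod_{|S|\text{ even}}(1+x_S)$ and $B:=\prod_{|S|\text{ odd}}(1+x_S)$, so $w(\rho)$ equals $A/B$ or $B/A$ according to the parity of $n$. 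Both $A$ and $B$ have constant term $1$, so the whole lemma follows at once from the polynomial identity
\[
A + B \;=\; P \;:=\; \prod_{|T|\text{ odd}} x_T \qquad\text{in } \FF_2[x_1,\ldots,x_n]:
\]
indeed $w(\rho)-1$ is then $P/B$ or $P/A$, a power series whose lowest-degree component is $P$ itself, in degree $2^{n-1}$.

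The main step is to establish this identity. Work in the UFD $\FF_2[x_1,\ldots,x_n]$ and set $f:=A+B-P$. For each odd $T\subseteq[n]$, the symmetric-difference map $S\mapsto S\triangle T$ is a parity-reversing involution on $2^{[n]}$ (since $|T|$ is odd), and it satisfies $x_S+x_{S\triangle T}=x_T$, hence $x_S\equiv x_{S\triangle T}\pmod{x_T}$. Pairing the factors of $A$ with those of $B$ through this involution yields $A\equiv B\pmod{x_T}$, while $P$ has $x_T$ as one of its factors; together $x_T\mid f$ for every odd $T$. The $2^{n-1}$ linear forms $\{x_T\}_{|T|\text{ odd}}$ are distinct elements of $\FF_2[x_1,\ldots,x_n]$, hence pairwise coprime, so their product $P$ itself divides $f$. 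A degree count now closes the argument: $\deg A\leq 2^{n-1}-1$ (the $S=\emptyset$ factor is trivial), whereas $\deg B=2^{n-1}$ with top-degree component equal to $P$; so $\deg f<2^{n-1}=\deg P$, forcing $f=0$.

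The second equality claimed in the lemma, expressing $w_{2^{n-1}}(\rho)$ as the sum of monomials $x_1^{2^{r_1}}\cdots x_n^{2^{r_n}}$ with $\sum_i 2^{r_i}=2^{n-1}$, is a separate combinatorial expansion of $\prod_{|T|\text{ odd}} x_T$: choosing one summand $x_{f(T)}\in\{x_i:i\in T\}$ from each odd factor $x_T$ produces a monomial $x_1^{a_1}\cdots x_n^{a_n}$ with $a_i=|f^{-1}(i)|$, and a parity count shows that the monomials surviving mod $2$ are precisely those whose exponents $a_i$ are all powers of two. The main obstacle I foresee is guessing the clean polynomial identity $A+B=P$ and finding the parity-reversing involution $S\mapsto S\triangle T$ that drives the divisibility argument; once those are in hand, the rest of the proof is short.
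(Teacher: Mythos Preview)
Your proof is correct and follows essentially the same route as the paper: both write the total Stiefel-Whitney class of $\rho$ as the ratio $A/B$ (or $B/A$) of the products over even and odd subsets, and both show that each linear form $x_T$ with $|T|$ odd divides the ``difference from $1$'' by means of the parity-reversing pairing $S\mapsto S\triangle T$ (which the paper writes additively as $(\epsilon',\epsilon_n)\leftrightarrow(\epsilon'+\alpha',\epsilon_n+1)$). Your packaging as the polynomial identity $A+B=P$ in $\FF_2[x_1,\ldots,x_n]$ is slightly cleaner than the paper's version: the degree count forces $A+B-P=0$ outright, whereas the paper works in the power series ring, obtains only $\omega-1=P\cdot f$ for some unknown $f$, and then needs a separate substitution $x_1=\cdots=x_n=x$ to verify that $f$ has constant term~$1$. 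For the second equality (product equals sum), the paper does not give an argument either but simply cites Lemma~2.5 of Guillot--Min\'{a}\v{c}; your one-line ``parity count'' sketch is in the same spirit, though as stated it is not a proof.
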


\begin{proof}
  The lemma generalizes Lemma~3.2\slash Corollary~3.3 of \cite{Milnor}.  The first part of Milnor's proof applies verbatim.
  Consider the evaluation map
  \[
  \ZZ/2\llbracket x_1,\dots,x_n\rrbracket \xrightarrow{\;\ev\;}  \textstyle\prod_i H^i_\et(X,\ZZ/2)\\
  \]
  sending \(x_i\) to \(\bar u_i\).
  The total Stiefel-Whitney class \(w(\rho) = 1 + w_1(\rho) + w_2(\rho) + \dots\) is the  evaluation of the power series
  \[
  \omega(x_1,\dots,x_n) := \left(\frac{\prod_{\abs{\vec \epsilon}\text{ even }}(1+\vec\epsilon\,\vec x)}
    {\prod_{\abs{\vec \epsilon}\text{ odd }}(1+\vec\epsilon\,\vec x)}\right)^{(-1)^n},
  \]
  where the products range over all \(\vec \epsilon = (\epsilon_1,\dots,\epsilon_n)\in(\ZZ/2)^n\) with \(\abs{\vec \epsilon}:=\epsilon_1+\cdots+\epsilon_n\) even or odd, and where \(\vec\epsilon\,\vec x\) denotes the sum \(\sum_i\epsilon_i x_i\).
  As Milnor points out, all factors of \(\omega\) cancel if we substitute \(x_i=0\) for some~\(i\).
  More generally, all factors cancel whenever we replace a given variable \(x_i\) by the sum of an even number of variables \(x_{i_1} + \cdots + x_{i_{2l}}\) all distinct from \(x_i\).
  Indeed, consider the substitution \(x_n = \vec\alpha\,\vec x\) with \(\abs{\vec\alpha}\) even and \(\alpha_n= 0\). Write \(\vec x = (\vec x', x_n)\), \(\vec \epsilon= (\vec\epsilon',\epsilon_n)\) and \(\vec \alpha=(\vec \alpha',0)\), so that the substitution may be rewritten as \(x_n = \vec \alpha'\,\vec x'\). Then
  \[
  (\vec \epsilon', \epsilon_n)(\vec x', \vec\alpha'\,\vec x') = (\vec \epsilon'+\vec \alpha', \epsilon_n+1)(\vec x', \vec\alpha'\,\vec x'),
  \]
  but the parities of \(\abs{(\vec\epsilon',\epsilon_n)}\) and \(\abs{(\vec \epsilon'+\vec \alpha',\epsilon_n+1)}\) are different. Thus, the corresponding factors of \(\omega\) cancel.
  It follows that \(\omega-1\) is divisible by all sums of an odd number of distinct variables \(x_{i_1} + \cdots + x_{i_k}\).
  Therefore,
  \begin{equation}\label{eq:w-calculation}
  \omega = 1 + (\textstyle\prod_{\abs{\vec\epsilon}\text{ odd}} \vec\epsilon \vec x)\cdot f(\vec x)
  \end{equation}
  for some power series \(f\).
  In particular, \(\omega\) has no non-zero coefficients in positive total degrees below \(\sum_{k \text{ odd}}\binom{n}{k} = 2^{n-1}\), proving the first part of the \namecref{lem:w-calculation}.

  For the second part, we need to show that the constant coefficient of \(f\) is \(1\).
  This can be seen by considering the substitution \(x_1=x_2=\cdots=x_n = x\) in \eqref{eq:w-calculation}:
  we obtain
  \begin{align*}
  \left(\frac{1}{(1+x)^K}\right)^{\pm 1} &= 1 + x^K f(x,\dots,x)
  \intertext{with \(K = \sum_{k \text{ odd}} \binom{n}{k} = 2^{n-1}\),
  and as \((1+x^K) = 1 + x^K \mod 2\) for \(K\) a power of two,
  this equation can be rewritten as
  }
  (1 + x^K)^{\mp 1} &= 1 + x^K f(x,\dots, x).
  \end{align*}
  The claim follows.
  Finally, the identification of the product expression for \(w_{2^{n-1}}(\rho)\) with a sum is Lemma~2.5 of \cite{GuillotMinac}.
  It is verified by showing that all factors of the product divide the sum, using similar substitution arguments as above.
\end{proof}

\begin{rem}
  Milnor's proof in the case when \(X\) is a field \(k\) uses the relation \(a^{\scup 2} = [-1]\scup a\) in \(H^2(k,\ZZ/2)\), which does not hold in general.
\end{rem}

\begin{prop}\label{comparison:F2F3}
  Let \(X\) be a connected scheme over \(\ZZ[\frac{1}{2}]\).
  Then
  \(
  w_i(\gammaF{n} \GW(X)) = 0 \text{ for } 0<i<2^{n-1}
  \).
  In particular:
  \begin{align*}
  \gammaF{2}\GW(X)&\subset \clasF{2}\GW(X)\\
  \gammaF{3}\GW(X)&\subset \clasF{3}\GW(X)
  \end{align*}
\end{prop}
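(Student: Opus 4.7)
The ``in particular'' inclusions $\gammaF{2}\subset\clasF{2}$ and $\gammaF{3}\subset\clasF{3}$ follow directly from the main vanishing statement together with the chain $\gammaF{n}\subset\gammaF{n-1}$, so the real task is to prove $w_i(\gammaF{n}\GW(X))=0$ for $0<i<2^{n-1}$. My plan combines the ingredients already developed in the paper: \Cref{lem:gamma-filtration-generators} to reduce to monomial generators, the \'etale splitting principle (\Cref{thm:etale-splitting}) to reduce to split symmetric bundles, and \Cref{lem:gamma-calculation,lem:w-calculation} for the final computation. The Whitney sum formula $w(a+b)=w(a)w(b)$ will then propagate the vanishing from monomials to $\ZZ$-linear combinations, so it is enough to verify the claim on a single monomial generator.

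First, I would apply \Cref{lem:gamma-filtration-generators} with the additive generating set $E:=\{(\sheaf E,\eps)-\rank(\sheaf E)\langle 1\rangle\}$ of $\gammaF{1}\GW(X)$. Thus I may assume $x=\prod_j\gamma^{i_j}(e_j)$ with $e_j\in E$ and $N:=\sum_j i_j\geq n$. Only finitely many symmetric bundles $(\sheaf E_j,\eps_j)$ enter such an expression, so iterating \Cref{thm:etale-splitting} produces a morphism $\pi\colon X'\to X$ on which every $(\sheaf E_j,\eps_j)$ decomposes as an orthogonal sum of symmetric line bundles and for which $\pi^*$ is injective on $H^*_\et(-,\ZZ/2)$. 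By naturality of Stiefel--Whitney classes, it then suffices to prove $w_i(\pi^*x)=0$.

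Over $X'$, each $\pi^*e_j$ takes the form $\sum_i(u_{j,i}-\langle 1\rangle)$ with the $u_{j,i}$ symmetric line bundles. Applying \Cref{lem:gamma-calculation} (with all of its ``$v_i$''s equal to the unit line element $\langle 1\rangle$) rewrites $\gamma^{i_j}(\pi^*e_j)$ as a $\ZZ$-linear combination of $i_j$-fold products of factors of the form $(\ell-\langle 1\rangle)$ with $\ell$ a line element. Multiplying across $j$ presents $\pi^*x$ as an integer combination of products
\[
\rho=(\ell_1-\langle 1\rangle)\cdots(\ell_N-\langle 1\rangle)
\]
with $N\geq n$ factors, and \Cref{lem:w-calculation} immediately yields $w_i(\rho)=0$ for $0<i<2^{N-1}$, which covers the desired range $0<i<2^{n-1}$. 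Reassembling via the Whitney sum formula and descending along the injection $\pi^*$ concludes the argument.

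I do not foresee a genuine obstacle here: all the ingredients above have been set up to mesh precisely. The one point requiring modest care is that the \'etale splitting principle does not split symmetric bundles inside $\GW(X)$ itself but only after pullback; this is compensated for by the fact that Stiefel--Whitney classes take values in \'etale cohomology, where $\pi^*$ is injective, so the vanishing can be descended back to $X$.
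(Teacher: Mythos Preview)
Your proposal is correct and follows essentially the same route as the paper's proof: reduce to monomial generators via \Cref{lem:gamma-filtration-generators}, pull back along the \'etale splitting principle (\Cref{thm:etale-splitting}) to split the symmetric bundles involved, apply \Cref{lem:gamma-calculation} and \Cref{lem:w-calculation}, and descend via injectivity of $\pi^*$ on \'etale cohomology. The only cosmetic difference is that you choose the generating set $E=\{(\sheaf E,\eps)-\rank(\sheaf E)\langle 1\rangle\}$, whereas the paper writes each $x_i$ as a general difference $[\sheaf E_i,\eps_i]-[\sheaf F_i,\phi_i]$ and splits both bundles; your choice is slightly tidier but changes nothing of substance.
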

\begin{proof}
  Let \(x:= \gamma^{k_1}(x_1)\cdots\gamma^{k_l}(x_l)\) be an additive generator of \(\GW^n(X)\), \ie \(x_i\in \ker(\rank)\) and \(\sum k_i\geq n\). By writing each \(x_i\) as \([\vb E_i,\epsilon_i]-[\vb F_i,\phi_i]\) for certain symmetric vector bundles \((\vb E_i,\epsilon_i)\) and \((\vb F_i,\phi_i)\) and successively applying the splitting principal for ^^e9tale cohomology (\Cref{thm:etale-splitting}) to each of these, we can find a morphism
  \[ X_{x} \to X\]
  which is injective on ^^e9tale cohomology with \(\ZZ/2\)-coefficients, and such that each \(\pi^*x_i\) is a sum of differences of line bundles. By \Cref{lem:gamma-calculation}, each \(\gamma^{k_i}(\pi^*x_i)\) can therefore be written as a linear combination of products \((u_1-1)\cdots(u_m-1)\) with \(m=k_i\) factors, where each \(u_i\) is the class of some line bundle over \(X_{x}\). Using the naturality of the \(\gamma\)-operations, it follows that \(\pi^*x\) can be written as a linear combination of such products with \(m\geq n\) factors.

  By \Cref{lem:w-calculation}, the classes \(w_i\) vanish on every summand of this linear combination for \(0<i< 2^{n-1}\). So \(w_i(\pi^*x) = 0\) for all \(0<i<2^{n-1}\), and by the naturality of Stiefel-Whitney classes and the injectivity of \(\pi^*\) on cohomology we may conclude that \(w_i(x)\) vanishes in this range.
\end{proof}

\subsection{Comparison with the unramified filtration}\label{sec:unramified-filtration}
Here, we quickly summarize some observations on the relation of the \(\gamma\)-filtration with the ``unramified filtration''.

First, let \(X\) be an integral scheme with function field \(K\), and let \(\urF{*}\GW(X)\) denote the unramified filtration of \(\GW(X)\), given by the preimages of \(\GI^i(K)\) under the natural map \(\GW(X)\to \GW(K)\).  Said map is a morphism of augmented \(\lambda\)-rings, so \(\gammaF{i}\GW(X)\) maps to \(\gammaF{i}\GW(K)=\GI^i(K)\) and we obtain:

\begin{prop}\label{comparison:finer-than-unramified}
  For any integral scheme \(X\), the \(\gamma\)-filtration on \(\GW(X)\) is finer than the unramified filtration, \ie \(\gammaF{i}\GW(X)\subset \urF{i}\GW(X)\) for all~\(i\).\qed
\end{prop}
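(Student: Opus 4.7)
The plan is to observe that the natural restriction map \(\GW(X)\to\GW(K)\) is a morphism of augmented pre-\(\lambda\)-rings, and then exploit the already-established identification of the \(\gamma\)-filtration on \(\GW(K)\) with the fundamental filtration.

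First I would note that the map \(j^*\colon \GW(X)\to\GW(K)\) coming from the canonical morphism \(j\colon\Spec(K)\to X\) is a ring homomorphism which commutes with taking exterior powers: this is immediate from the fact that \(j^*(\Lambda^k\sheaf M,\Lambda^k\mu) = (\Lambda^k j^*\sheaf M,\Lambda^k j^*\mu)\), with the duality-compatibility isomorphism \(\eta\) (appearing in the proof of \Cref{prop:lambda-on-GW}) being natural in \(\sheaf M\). In particular, \(j^*\) commutes with all \(\gamma^k\) and preserves the rank augmentation, so it is a morphism of augmented pre-\(\lambda\)-rings.

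Next, by \Cref{prop:local-filtration}, the \(\gamma\)-filtration on \(\GW(K)\) coincides with the fundamental filtration by powers of \(\GI(K)\); that is, \(\gammaF{i}\GW(K) = \GI^i(K)\). Since \(\gamma\)-morphisms send \(\gammaF{i}\) into \(\gammaF{i}\), we obtain
\[
j^*\bigl(\gammaF{i}\GW(X)\bigr) \subseteq \gammaF{i}\GW(K) = \GI^i(K),
\]
which by the very definition of \(\urF{i}\GW(X)\) as \((j^*)^{-1}\bigl(\GI^i(K)\bigr)\) gives the required inclusion \(\gammaF{i}\GW(X)\subseteq \urF{i}\GW(X)\).

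There is essentially no obstacle here: the only content is the naturality of the \(\lambda\)-operations with respect to pullback, which follows from the construction in \Cref{prop:lambda-on-GW} (and is implicit in condition \textit{(1')} of \Cref{lemFiltration}). The statement is really just the conjunction of this naturality with the local computation of \Cref{prop:local-filtration}.
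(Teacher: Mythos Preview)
Your proof is correct and follows essentially the same approach as the paper: the paper simply notes that the natural map \(\GW(X)\to\GW(K)\) is a morphism of augmented \(\lambda\)-rings, hence carries \(\gammaF{i}\GW(X)\) into \(\gammaF{i}\GW(K)=\GI^i(K)\) by \Cref{prop:local-filtration}. Your version merely spells out the naturality of the \(\lambda\)-operations in slightly more detail.
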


The unramified \GrothendieckWitt group of \(X\) is defined as
\[
\GW_\ur(X) := \bigcap_{x\in X^{(1)}} \im\big( \GW(\O_{X,x}) \to \GW(K)\big),
\]
where \(X^{(1)}\) denotes the set of codimension one points of~\(X\).
Let us consider the functors \(\GW\) and \(\GW_\ur\) as the presheaves on our given integral scheme \(X\) that send an open subset \(U\subset X\) to \(\GW(U)\) or \(\GW_\ur(U)\), respectively.  Then \(\GW_\ur\) is a sheaf, and we have a sequence of morphisms of presheaves
\[
\GW \to \GW^+ \to \GW_\ur \hookrightarrow \GW(K),
\]
where \((-)^+\) denotes sheafification and \(\GW(K)\) is to be interpreted as the constant sheaf with value \(\GW(K)\).  The unramified filtration of \(\GW_\ur\) is obtained by intersecting the fundamental filtration on \(\GW(K)\) with \(\GW_\ur\):
\[
\urF{i}\GW_\ur := \GW_\ur \cap \GI^i(K).
\]
This is a filtration by sheaves, and the unramified filtration \(\urF{i}\GW\) is given by the preimage of \(\urF{i}\GW_\ur\) under the above morphisms.

When \(X\) is regular integral of finite type over a field of characteristic not two, the purity results of Ojanguren and Panin \citelist{\cite{Ojanguren}\cite{OjangurenPanin}*{Thm~A}} imply that the morphism \(\GW^+\to \GW_\ur\) is an isomorphism.
If we further assume that the field is infinite, a result of Kerz and M^^fcller-Stach yields the following:

\begin{prop}\label{prop:sheafified-filtrations}
  For any regular integral scheme of finite type over an infinite field of characteristic not two, the  \(\gamma\)-filtration and the unramified filtration have the same sheafifications:
  \[
  (\gammaF{i}\GW)^+ = (\urF{i}\GW)^+ = \urF{i}\GW_\ur
  \]
\end{prop}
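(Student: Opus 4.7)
The plan is to reduce to a stalkwise comparison and then invoke three inputs: \Cref{comparison:finer-than-unramified} for the easy inclusion, \Cref{prop:local-filtration} to identify the local $\gamma$-filtration with the fundamental filtration, and the Kerz--Müller-Stach result (together with Ojanguren--Panin purity, already cited just before the proposition) to conclude the reverse inclusion.

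First, I would dispose of the equality $(\urF{i}\GW)^+ = \urF{i}\GW_\ur$. This is essentially formal: the unramified filtration $\urF{i}\GW$ is the preimage of the constant subpresheaf $\GI^i(K) \subset \GW(K)$, sheafification commutes with pullback along a morphism into a sheaf, and under the purity isomorphism $\GW^+ \cong \GW_\ur$ of Ojanguren--Panin this preimage identifies with $\GW_\ur \cap \GI^i(K) = \urF{i}\GW_\ur$ by definition.

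For the harder equality $(\gammaF{i}\GW)^+ = \urF{i}\GW_\ur$, the inclusion $\subseteq$ is obtained by sheafifying \Cref{comparison:finer-than-unramified}. The reverse inclusion is checked stalkwise. I would first observe that for every $x \in X$, since $\gamma$-operations commute with ring maps and each element of $\gammaF{i}\GW(\O_{X,x})$ is a finite sum of finite products of $\gamma$-operations, the canonical map $\colim_{U \ni x} \gammaF{i}\GW(U) \to \gammaF{i}\GW(\O_{X,x})$ is surjective, so $(\gammaF{i}\GW)^+_x = \gammaF{i}\GW(\O_{X,x})$. By \Cref{prop:local-filtration}, this in turn equals $\GI^i(\O_{X,x})$. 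On the other hand, the purity isomorphism gives $(\GW_\ur)_x = \GW(\O_{X,x})$, so $(\urF{i}\GW_\ur)_x = \GW(\O_{X,x}) \cap \GI^i(K)$. The proposition thus reduces to the identity
\[
\GI^i(\O_{X,x}) \;=\; \GW(\O_{X,x}) \cap \GI^i(K)
\]
for every local ring $\O_{X,x}$ of $X$.

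This last identity is where the real work lies and is the only step that genuinely uses the infinite-field hypothesis. The inclusion $\subseteq$ is automatic. For the reverse inclusion, the idea is to argue by induction on $i$, the case $i=1$ being standard purity for $\GW \to \GW(K)$ on a regular local ring. For the inductive step, one needs to know that the natural map $\GI^i(\O_{X,x})/\GI^{i+1}(\O_{X,x}) \to \GI^i(K)/\GI^{i+1}(K)$ is injective; combined with the inductive hypothesis this upgrades the containment $\GW(\O_{X,x}) \cap \GI^i(K) \subseteq \GI^{i-1}(\O_{X,x})$ (which one has by induction) to an element of $\GI^i(\O_{X,x})$. The required injectivity on graded pieces is exactly a consequence of the Gersten conjecture for Milnor K-theory in the form established by Kerz and Müller-Stach for regular local rings over infinite fields of characteristic not two, via the Milnor conjecture (Voevodsky et al.) identifying these graded pieces with mod-$2$ Milnor K-groups. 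This step is the main obstacle, and the only one that forces both the infiniteness of $k$ and regularity of $X$.
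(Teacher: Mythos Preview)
Your proposal is correct and follows essentially the same route as the paper: reduce to a stalkwise comparison, identify the stalks of the unramified filtration as $\GW(\O_{X,x})\cap\GI^i(K)$ via Ojanguren--Panin purity, identify the stalks of the $\gamma$-filtration as $\GI^i(\O_{X,x})$ via \Cref{prop:local-filtration}, and then invoke Kerz--M\"uller-Stach for the equality $\GI^i(\O_{X,x}) = \GW(\O_{X,x})\cap\GI^i(K)$. The only difference is that the paper cites Corollary~0.5 of \cite{KerzMS} as a black box for this last identity, whereas you unpack the mechanism (induction on $i$, injectivity on graded pieces via the Gersten property for Milnor K-theory and the Milnor conjecture); your explicit use of \Cref{comparison:finer-than-unramified} for one inclusion is also a minor stylistic addition not present in the paper.
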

\begin{proof}
  As already mentioned, the results of Ojanguren and Panin imply that \(\GW^+\) injects into \(\GW(K)\) in this situation, with image \(\GW_\ur\).  In particular, the stalks of \(\GW_\ur\) are those of \(\GW\): \(\GW_x = (\GW_\ur)_x = \GW(\O_{X,x})\).  Consequently, the unramified filtration  has stalks
  \[
  (\urF{i}\GW)_x = (\urF{i}\GW_\ur)_x = \GW(\O_{X,x})\cap \GI^{i}(K).
  \]
  The \(\gamma\)-filtration \(\gammaF{i}\GW\) on the other hand, also viewed as a presheaf, has stalks \(\gammaF{i}\GW(\sheaf O_{X,x})\).  By \Cref{prop:local-filtration} above and Corollary~0.5 of \cite{KerzMS}, these stalks agree.
\end{proof}

Both propositions apply verbatim to the Witt ring \(\W\) in place of \(\GW\).  If, in addition to the assumptions of \Cref{prop:sheafified-filtrations}, our scheme is separated and of dimension at most three, then by \cite{BalmerWalter} the Witt presheaf \(\W\) is already a sheaf, and hence also \(\urF{i}\W\) is a filtration by sheaves.  This justifies the claim made in the introduction that the ``the unramified filtration of the Witt ring is the sheafification of the \(\gamma\)-filtration'' in this situation.

\section{Examples}\label{sec:examples}
All our examples will be smooth quasiprojective varieties over a field of characteristic different from two.
The lower-degree pieces of the filtrations on the \(\K\)-, \GrothendieckWitt and Witt rings will therefore always fit the following pattern:
\begin{align*}
  \gammaF{0}\K &= \topF{0}\K = \K              & \gammaF{0} \GW &= \GW & \gammaF{0} \W &= \W \\
  \gammaF{1}\K &= \topF{1}\K = \ker(\rank)     & \gammaF{1} \GW &= \ker(\rank) & \gammaF{1} \W &= \ker(\bar \rank) \\
  \gammaF{2}\K &= \topF{2}\K = \ker(c_1) & \gammaF{2} \GW &= \ker(w_1)  & \gammaF{2} \W &= \ker(\bar w_1)  \\
  \gammaF{3}\K &\subset \topF{3}\K =\ker(c_2)  & \gammaF{3} \GW &\subset \ker(w_2)  & \gammaF{3} \W&\subset \ker(\bar w_2)
\intertext{%
(For the topological filtration \(\topF{*}\) on the \(\K\)-ring, see \cite{Fulton:Intersection}*{Example~15.3.6}.  The symbols \(c_i\) denote the Chern classes with values in Chow groups.)  Accordingly, the first Chern class \(c_1\) and the first Stiefel-Whitney classes \(w_1\) and \(\bar w_1\) induce isomorphisms:
}
  \gr^1_\gamma\K &\cong \Pic & \gr^1_\gamma\GW &\cong H^1_\et(-,\ZZ/2) &  \gr^1_\gamma\W &\cong H^1_\et(-,\ZZ/2)
\end{align*}

Some details concerning the computations for each of the following examples are provided at the end of this section.

\begin{example}[curve]\label{eg:curve}
  Let \(C\) be a smooth curve over a field of \(2\)-cohomological dimension at most \(1\), \eg over an algebraically closed field or over a finite field. Then
  \begin{alignat*}{11}
    &\gr_\gamma^*\GW(C) &&=\gr_{\classical}^*\GW(C) &&\cong \ZZ \oplus H_\et^1(C,\ZZ/2) \oplus H_\et^2(C,\ZZ/2)\\
    &\gr_\gamma^*\W(C) &&=\gr_{\classical}^*\W(C) &&\cong \ZZ/2\oplus H_\et^1(C,\ZZ/2)
  \end{alignat*}
\end{example}

\begin{example}[surface]\label{eg:surface}
  Let \(X\) be a smooth surface over an algebraically closed field. Setting \(F_{\classical}^i\GW(X) = F_{\classical}^i\W(X) := 0\) for \(i >3\), we obtain:
  \begin{alignat*}{7}
    &\gr^*_{\classical}\GW(X)
    &&\cong \ZZ \oplus H^1_{\et}(X,\ZZ/2) \oplus H^2_{\et}(X,\ZZ/2) \oplus \CH^2(X)\\
    \gr^*_\gamma\W(X) =
    &\gr^*_{\classical}\W(X)
    &&\cong \ZZ/2\oplus H^1_\et(X,\ZZ/2) \oplus H^2_\et(X,\ZZ/2)/\Pic(X)
  \end{alignat*}
  However, in general \(\gammaF{3}\GW(X)\subsetneq \clasF{3}\GW(X)=\CH^2(X)\).  For a concrete example, consider the product \(X = C\times\PP^1\), where \(C\) is any smooth projective curve. In this case
  \begin{align*}
    \clasF{3}\GW(X) &\cong \Pic(C)\\
    \gammaF{3}\GW(X) &\cong \Pic(C)[2] &&(\text{kernel of multiplication by 2}).
  \end{align*}
\end{example}

\begin{example}[\(\PP^r\)]\label{eg:Pr}
  Let \(\PP^r\) be the \(r\)-dimensional projective space over a field~\(k\).  We first describe its \GrothendieckWitt ring. Let \(a := H_0(\lb O(1) - 1)\) and \(\rho := \lceil \frac{r}{2}\rceil\).  Then:
  \begin{align*}
    \GW(\PP^r) &\cong
    \begin{cases}
      \GW(k) \oplus \ZZ a \oplus \ZZ a^2 \oplus \dots \oplus\ZZ a^{\rho-1} \oplus \ZZ a^{\rho}\phantom{\big)} &\text{ if \(r\) is even}\\
      \GW(k) \oplus \ZZ a \oplus \ZZ a^2 \oplus \dots \oplus \ZZ a^{\rho-1} \oplus (\ZZ/2) a^{\rho} &\text{ if \(r \equiv \phantom{-}1 \mod 4\)}\\
      \GW(k) \oplus \ZZ a \oplus \ZZ a^2 \oplus \dots \oplus \ZZ a^{\rho-1} &\text{ if \(r \equiv -1 \mod 4\)}
    \end{cases}
  \end{align*}
  The multiplication is determined by the formula \(\phi\cdot a^i = \rank(\phi)a^i\) for \(\phi\in \GW(k)\) and \(i>0\), and by the vanishing of all higher powers of \(a\) (\ie \(a^i = 0\) for all \(i\geq\rho\) when \(r\equiv -1 \mod 4\); \(a^i = 0\) for all \(i>\rho\) in the other cases).\footnote{
    Over \(k=\CC\), this agrees with the ring structure of \(\KO(\CC P^n)\) as computed by Sanderson \cite{Sanderson}*{Thm~3.9}.}

  In this description, \(\gammaF{i}\GW(\PP^r)\) is the ideal generated by \(\gammaF{i}\GW(k)\) and \(a^{\lceil\frac{i}{2}\rceil}\). In particular, \(\gammaF{3}\GW(X)\) is again strictly smaller than \(\clasF{3}\GW(X)\):
  \begin{align*}
    \clasF{3}\GW(\PP^r) &= \gammaF{3}\GW(k) + (a^2,2a)\\
    \gammaF{3}\GW(\PP^r) &= \gammaF{3}\GW(k) + (a^2)
  \end{align*}
  The associated graded ring looks very similar to the ring itself:
  \begin{align*}
    \gr_\gamma^*\GW(\PP^r) & \cong
    \begin{cases}
      \gr_\gamma^*\GW(k) \oplus \ZZ a \oplus \ZZ a^2 \oplus \dots \oplus\ZZ a^{\rho-1} \oplus \ZZ a^{\rho}\phantom{\big)} &\text{ if \(r\) is even}\\
      \gr_\gamma^*\GW(k) \oplus \ZZ a \oplus \ZZ a^2 \oplus \dots \oplus \ZZ a^{\rho-1} \oplus (\ZZ/2) a^{\rho} &\text{ if \(r \equiv \phantom{-}1 \mod 4\)}\\
      \gr_\gamma^*\GW(k) \oplus \ZZ a \oplus \ZZ a^2 \oplus \dots \oplus \ZZ a^{\rho-1} &\text{ if \(r \equiv -1 \mod 4\)}
    \end{cases}
  \end{align*}
  with \(a\) of degree~2.  In the Witt ring, all the hyperbolic elements \(a^i\) vanish, so obviously \(\gr_\gamma^*\W(\PP^r) \cong \gr_\gamma^*\W(k)\).
\end{example}

\begin{example}[\(\AA^1\setminus 0\)]\label{eg:punctured-A^1}
  For the punctured affine line over a field \(k\), we have
  \begin{alignat*}{7}
    \GW(\AA^1\setminus 0) &\;\cong\; &\GW(k) &\;\oplus\;& \W(k)\red\eps& \\
    \gammaF{i}\GW(\AA^1\setminus 0) &\;\cong\; &\GI^i(k) &\;\oplus\;&\I^{i-1}(k)\red\eps&
  \end{alignat*}
  for some generator \(\red\eps\in\gammaF{1}\GW(\AA^1\setminus 0)\) satisfying \(\red\eps^2 = 2\red\eps\).  In this example, \(\gammaF{3}\GW(\AA^1\setminus 0) = \ker(w_2)\).
\end{example}

\begin{example}[\(\AA^{4n+1}\setminus 0\)]\label{eg:punctured-A^d}
  For punctured affine spaces of dimensions \(d\equiv 1 \mod 4\) with \(d > 1\), there is a similar result for the \GrothendieckWitt group \cite{BalmerGille}:
  \[
  \GW(\AA^{4n+1}\setminus 0) \cong \GW(k) \oplus \W(k)\red\eps
  \]
  for some \(\red\eps\in \gammaF{1}\GW(\AA^1\setminus 0)\).
  However, in this case \(\red\eps^2= 0\), and the \(\gamma\)-filtration is also different from the \(\gamma\)-filtration in the one-dimensional case.
  This is already apparent over the complex numbers, where we find:
  \[
    \gammaF{i}\GW(\AA_\CC^5\setminus 0) \cong \gammaF{i}\W(\AA_\CC^5\setminus 0) \cong
    \left\{
      \begin{alignedat}{7}
      \ZZ/2 &\red\eps  &&\text{ for } i=1,2\\
       0 & && \text{ for } i \geq 3
      \end{alignedat}
\right.
\]
  In particular, in this example \(\gammaF{3}\W(X)\neq \clasF{3}\W(X)\), the latter being non-zero since since \(w_2\) and \(\bar w_2\) are zero.
\end{example}

\begin{proof}[Calculations for \Cref{eg:curve} (curve)]
  Consider the summary at the beginning of this section.  In dimension~\(1\), we have \(\topF{2}\K = 0\), so \(\gammaF{2}\K = \ker(c_1) = 0\). Moreover, by \cite{Me:WCS}*{proof of Cor.~3.7}, \(w_2\) is surjective for the curves under consideration, with kernel isomorphic to the kernel of \(c_1\). So \(w_2\) is an isomorphism.  It follows that \(\gammaF{3} \GW = \gammaF{3} \W = 0\) and hence that \(\gr_\gamma^*\GW = \gr_\classical^*\GW\) and \(\gr_\gamma^*\W = \gr_\classical^* \W\).  These graded groups are computed in [loc.\ cit., Thm~3.1 and Cor.~3.7].
\end{proof}

\begin{proof}[Calculations for \Cref{eg:surface} (surface)]
  The classical filtration is computed in \cite{Me:WCS}*{Cor.~3.7/4.7}. In the case \(X=C\times\PP^1\), Walter's projective bundle formula \cite{Walter:PB}*{Thm~1.5} and the results on \(\GW^*(C)\) of \cite{Me:WCS}*{Thm~2.1/3.1} yield:
  \[
  \GW(X) \cong
  \lefteqn{\overbrace{\phantom{\ZZ \oplus \Pic(C)[2] \oplus \ZZ/2}}^{\pi^*\GW(C)}}
  \ZZ
  \oplus
  \underbrace{\Pic(C)[2]}_{H^1_\et(X,\ZZ/2)}
  \oplus
  \underbrace{
    \ZZ/2 \oplus
    \lefteqn{\overbrace{\phantom{\ZZ/2\oplus\Pic(C)}}^{\pi^*\GW^{-1}(C)\cdot\Psi}}
    \ZZ/2}_{H^2_\et(X,\ZZ/2)}
  \oplus
  \underbrace{\Pic(C)}_{\CH^2(X)}
  \]
  Here, \(\pi\colon X\twoheadrightarrow C\) is the projection and \(\Psi\in\GW^1(\PP^1)\) is a generator.   Writing \(H_i\colon \K\to\GW^i\) for the hyperbolic maps, we can describe the additive generators of \(\GW(X)\) explicitly as follows:
  \begin{compactitem}[-]
  \item \(1\) (the trivial symmetric line bundle)

  \item \(a_{\lb L} := \pi^*{\lb L}-1\), for each symmetric line bundle \(\lb L\) on \(C\), \ie for each \(\lb L\in\Pic(C)[2]\)
  \item \(b := H_0(\pi^*\lb L_1-1)\), where \(\lb L_1\) is a line bundle of degree~1 on \(C\) (hence a generator of the free summand of \(\Pic(C)\))
  \item \(c := H_{-1}(1)\cdot \Psi = H_0(F\Psi)\); here \(F\Psi = \lb O(-1) - 1\) with \(\lb O(-1)\) the pullback of the canonical line bundle on \(\PP^1\)
  \item \(d_{\lb N} := H_{-1}(\pi^*\lb N - 1) \cdot \Psi = H_0((\pi^*\lb N - 1)\cdot F\Psi)\), for each \(\lb N\in \Pic(C)\).
  \end{compactitem}
  In this list, the generators appear in the same order as the direct summands of \(\GW(X)\) that they generate appear in the formula above.
  An alternative set of generators is obtained by replacing the generators \(d_{\lb N}\) by the following generators:
  \begin{align*}
    d'_{\lb N}
    &:= H_0(\pi^*\lb N\otimes \lb O(-1) - 1)\\
    &\,=\begin{cases}
      d_{\lb N} + c &\text{ if \(\lb N\) is of even degree}\\
      d_{\lb N} + b + c &\text{ if \(\lb N\) is of odd degree}
    \end{cases}
  \end{align*}
  The only non-trivial products of the alternative generators are
  \(a_{\lb L}c = a_{\lb L}d'_{\lb N} =  d_{\lb L}' + c \; (= d_{\lb L})\).
  Moreover, the effects of the operations \(\gamma^{i}\) on the alternative generators is immediate from \Cref{lem:gamma-of-H-line} below. So \Cref{lem:gamma-filtration-generators} tells us that \(\gammaF{3}\GW\) has additive generators
  \[
  \gamma^{1}(a_{\lb L})\cdot \gamma^{2}(c)
  = a_{\lb L}\cdot (-c)
  = d_{\lb L}
  \]
  with \(\lb L\in \Pic(C)[2]\). Thus, \(\gammaF{3}\GW(X) \cong \Pic(C)[2]\), viewed as subgroup of the last summand in the formula above.  We also find that \(\gammaF{4}\GW(X) = 0\).
\end{proof}

\begin{proof}[Calculation of the ring structure on \(\GW(\PP^r)\) (\Cref{eg:Pr})]\mbox{}\\
  By \cite{Walter:PB}*{Thms~1.1 and 1.5}, the \GrothendieckWitt ring of projective space can be additively described as
  \[
  \GW(\PP^r) =
  \begin{cases}
    \GW(k) \oplus \ZZ a_1 \oplus \dots \oplus \ZZ a_{\rho} & \text{ if \(r\) is even}\\
    \GW(k) \oplus \ZZ a_1 \oplus \dots \oplus \ZZ a_{\rho-1} \oplus (\ZZ/2) H_0(F\Psi) & \text{ if \(r \equiv \phantom{-}1 \mod 4\)}\\
    \GW(k) \oplus \ZZ a_1 \oplus \dots \oplus \ZZ a_{\rho-1} & \text{ if \(r \equiv -1 \mod 4\)}
  \end{cases}
  \]
  where \(a_i = H_0(\lb O(i) - 1)\) and \(\Psi\) is a certain element in \(\GW^r(\PP^r)\). Moreover, by tracing through Walter's computations, we find that
  \begin{equation}\label{eq:Pr-FPsi-formula}
    H_0(F\Psi) = -\sum_{j=1}^{\rho}(-1)^j\binom{r+1}{\rho-j}a_j.
  \end{equation}
  Indeed, we see from the proof of \cite{Walter:PB}*{Thm~1.5} that \(F\Psi = \lb O^{\oplus N} - \lambda^\rho(\Omega)(\rho)\) in \(\K(\PP^r)\), where \(\Omega\) is the cotangent bundle of \(\PP^r\) and \(N\) is such that the virtual rank of this element is zero.

  The short exact sequence \(0\to \Omega \to \lb O^{\oplus (r+1)}(-1) \to \lb O \to 0\) over \(\PP^r\) implies that
  \[
  \lambda^\rho(\Omega) = \lambda^\rho(\lb O^{\oplus(r+1)}(-1) - 1) \text{ in } \K(\PP^r),
  \]
  from which \eqref{eq:Pr-FPsi-formula} follows by a short computation.

  An element \(\Psi \in \GW^{r}(\PP^r)\) also exists in the case \(r\equiv -1\mod 4\), and \eqref{eq:Pr-FPsi-formula} is likewise valid in this case.  However, in this case, we see from Karoubi's exact sequence
  \[
  \GW^{-1}(\PP^r) \xrightarrow{F} \K(\PP^r) \xrightarrow{H_0} \GW^0(\PP^r)
  \]
  that \(H_0(F\Psi) = 0\). We can thus rewrite the above result for the \GrothendieckWitt group as
  \[
   \GW(\PP^r) =
  \begin{cases}
    \phantom{\big(}\GW(k) \oplus \ZZ a_1 \oplus \dots \oplus \ZZ a_{\rho}\phantom{\big)} & \text{ if \(r\) is even}\\
    \big(\GW(k) \oplus \ZZ a_1 \oplus \dots \oplus \ZZ a_{\rho-1} \oplus \ZZ a_{\rho}\big)\big/2h_r & \text{ if \(r \equiv \phantom{-}1 \mod 4\)}\\
    \big(\GW(k) \oplus \ZZ a_1 \oplus \dots \oplus \ZZ a_{\rho-1} \oplus \ZZ a_{\rho}\big)\big/h_r & \text{ if \(r \equiv -1 \mod 4\)}
  \end{cases}
  \]
  with \( h_r := \sum_{j=1}^{\rho}(-1)^j\binom{r+1}{\rho-j}a_j\).

  To see that we can alternatively use powers of \(a := a_1\) as generators, it suffices to observe that for all \(k\geq 1\),
  \begin{align}
    \label{eq:Pn-a_k}
    a_k &= a^k + \left(\ctext{4cm}{a linear combination of \(a, a^2, \dots, a^{k-1}\)}\right),
    \intertext{\ignorespaces
      which follows inductively from the recursive relation
    }
    \label{eq:Pn-recursive}
    a_k &= (a + 2) a_{k-1} - a_{k-2} + 2a.
  \end{align}
    for all \(k\geq 2\). (\(a_0 := 0\).)

   Next, we show that \(a^k = 0\) for all \(k > \rho\).
   Let \(x := \sheaf O(1)\), viewed as an element of \(\K(\PP^r)\).  The relation \((x-1)^{r+1} = 0\) in \(\K(\PP^r)\) implies that
   \[
    (x-1) + (x^{-1}-1) = \sum_{i=2}^r (-1)^i(x-1)^i,
    \]
  so that we can compute:
  \begin{align*}
    a^k
    = [H(x-1)]^k
    &= H\left( \left[FH(x-1)\right]^{k-1} (x-1) \right)\\
    &= H\left( \left[(x-1) + (x^{-1} - 1)\right]^{k-1} (x-1) \right)\\
    &= H\left( (x-1)^{2k-1} +  \ctext{5cm}{higher order terms in \((x-1)\)} \right)\\
    &= 0 \quad \text{for \(2k-1 > r\), or, equivalently, for \(k > \rho\).}
  \end{align*}
  \Cref{eq:Pn-a_k} also allows us to rewrite \(h_r\) in terms of the powers of \(a\).  Inductively, we find that \(h_r = (-a)^{\rho}\) for all odd \(r\), where \(\rho = \lceil \frac{r}{2}\rceil\).
\end{proof}

\begin{proof}[Calculation of the \(\gamma\)-filtration on \(\GW(\PP^r)\) (\Cref{eg:Pr}, continued)]\mbox{}\\
  We claim above that \(\gammaF{i}\GW(\PP^r)\) is the ideal generated by \(\gammaF{i}\GW(k)\) and \(a^{\lceil\frac{i}{2}\rceil}\).  Equivalently, it is the subgroup generated by \(\gammaF{i}\GW(k)\) and by all powers \(a^j\) with \(j \geq \frac{i}{2}\).
  To verify the claim, we note that by \Cref{lem:gamma-of-H-line} below, we have \(\gamma_i(a_j) = \pm a_j\) for \(i = 1, 2\), while for all \(i > 2\) we have \(\gamma_i(a_j) = 0\).
  In particular, \(a=a_1 \in\gammaF{2}\GW(\PP^r)\), and therefore \(a^j\in\gammaF{2j}\GW(\PP^r)\).  This shows that all the above named additive generators indeed lie in \(\gammaF{i}\GW(\PP^r)\).  For the converse inclusion, we note that by \Cref{lem:gamma-filtration-generators}, \(\gammaF{i}\GW(\PP^r)\) is additively generated by \( \gammaF{i}\GW(k)\) and by all finite products of the form
  \[
  \prod_j \gamma^{i_j}(a_{\alpha_j})
  \]
  with \(\sum_j i_j\geq i\).  Such a product is non-zero only if \(i_j\in\{0,1,2\}\) for all \(j\), in which case it is of the form
  \(
  \pm \prod_j a_{\alpha_j}
  \)
  with at least \(\frac{i}{2}\) non-trivial factors.  By \eqref{eq:Pn-a_k}, each non-trivial factor \(a_{\alpha_j}\) can be expressed as a non-zero polynomial in \(a\) with no constant term.  Thus, the product itself can be rewritten as a linear combination of powers \(a^j\) with \(j\geq\frac{i}{2}\).
\end{proof}

\begin{proof}[Calculations for \Cref{eg:punctured-A^1} \((\AA^1\setminus 0)\)]\mbox{}\\
  The Witt group of the punctured affine line has the form
  \(
  \W(\AA^1\setminus 0) \cong \W(k) \oplus \W(k)\eps,
  \)
  where \(\eps = (\lb O, t)\), the trivial line bundle with the symmetric form given by multiplication with the standard coordinate (\eg \cite{BalmerGille}).  It follows that
  \[
  \GW(\AA^1\setminus 0) \cong \GW(k) \oplus \W(k)\red\eps,
  \]
  where \(\red\eps := \eps - 1\).
  As for any symmetric line bundle, \(\eps^2 = 1\) in the \GrothendieckWitt ring; equivalently, \(\red\eps^2 = -2\red\eps\).
  To compute the \(\gamma\)-filtration, we need only observe that \(\GW(\AA^1\setminus 0)\) is generated by line elements. So
  \begin{align*}
    \gammaF{i}\GW(\AA^1\setminus 0)
    &= \left(\gammaF{1}\GW(\AA^1\setminus 0)\right)^i\\
    &= \left(\GI(k) \oplus \W(k)\red\eps\right)^i\\
    &= \GI^i(k) \oplus \I^{i-1}(k)\red\eps
  \end{align*}

  The ^^e9tale cohomology of \(\AA^1\setminus 0\) has the form
  \[
  H^*_\et(\AA^1\setminus 0,\ZZ/2) \cong H^*_\et(k,\ZZ/2) \oplus H^*_\et(k,\ZZ/2)w_1\eps.
  \]
  Recall that when we write \(\ker(w_1)\) and \(\ker(w_2)\), we necessarily mean the kernels of the restrictions of \(w_1\) and \(w_2\) to \(\ker(\rank)\) and \(\ker(w_1)\), respectively.
  An arbitrary element of \(\GW(\AA^1\setminus 0)\) can be written as \(x + y\red\eps\) with \(x, y\in \GW(k)\).
  For such an element, we have \(w_1(x + y\red\eps) = w_1x + \rank(y) w_1\eps\), so the general fact that \(\ker(w_1) = \gammaF{2}\GW\) is consistent with our computation.
  When \(\rank(y) = 0\), we further find that
  \[
  w_2(x + y\red\eps) = w_2 x + w_1 y \scup w_1 \eps,
  \]
  proving the claim that \(\ker(w_2) = \gammaF{3}\GW\) in this example.
\end{proof}

\begin{proof}[Calculations for \Cref{eg:punctured-A^d} \((\AA^{4n+1}\setminus 0)\)]\mbox{}\\
  Balmer and Gille show in \cite{BalmerGille} that for \(d = 4n+1\) we have  \(\W(\AA^d\setminus 0)\cong \W(k)\oplus \W(k)\eps\) for some symmetric space \(\eps\) of even rank \(r\) such that \(\eps^2 = 0\) in the Witt ring.  Let \(\red\eps := \eps-\tfrac{r}{2}\HH\).  Then
  \[
  \GW(\AA^d\setminus 0)\cong \GW(k) \oplus \W(k)\red\eps
  \]
  with \(\red\eps^2 = 0\).  As the K-ring of \(\AA^d\setminus 0\) is trivial, \ie isomorphic to \(\ZZ\) via the rank homomorphism, \(\gammaF{i}\GW(\AA^d\setminus 0)\) maps isomorphically to \(\gammaF{i}\W(\AA^d\setminus 0)\) for all \(i>0\).
  We now switch to the complex numbers.
  Equipped with the analytic topology, \(\AA^{4n+1}_\CC\) is homotopy equivalent to the sphere \(S^{8n+1}\), so we have a comparison map \(\GW(\AA^{4n+1}_\CC\setminus 0)\to\KO(S^{8n+1})\). As the \(\lambda\)-ring structures on both sides are defined via exterior powers, this is clearly a map of \(\lambda\)-rings.
  In fact, it is an isomorphism, as we see by comparing the localization sequences for \(\AA^d_\CC \setminus 0 \mbox{\(\lefteqn{\;\;\circ}\hookrightarrow\)} \AA^d_\CC \mbox{\(\lefteqn{\;\;\;\shortmid}\hookleftarrow\)} \{0\}\), as in the proof of \cite{Me:WCCV}*{Thm~2.5}.
  The \(\lambda\)-ring structure on \(\KO(S^{8n+1})\) can be deduced from \cite{Adams:Spheres}*{Thm~7.4}:  As a special case, the theorem asserts that the projection \(\RR\PP^{8n+1}\twoheadrightarrow \RR\PP^{8n+1}/\RR\PP^{8n}\simeq S^{8n+1}\) induces the following map in \(\KO\)-theory.
  \[\xymatrix@C=0pt@R=6pt{
    {\KO(S^{8n+1})}        \ar@{^{(}->}[d] \ar@{}[r]|{\cong}
    & {\factor{\ZZ[\red\eps]}{(2\red\eps, \red\eps^2)}}
    &&&& {\red\eps}          \ar@{|->}[d]
    \\
    {\KO(\RR\PP^{8n+1})} \ar@{}[r]|{\cong}
    & {\quad\quad\quad\factor{\ZZ[\red\lambda]}{(2^f\red\lambda,\red\lambda^2 - 2\red\lambda)}}
    &&&& {2^{f-1}\red\lambda}
  }\]
    Here, \(\lambda\) is the canonical line bundle over the real projective space,  \(\red\lambda := \lambda-1\), and \(f\) is some integer.
  Thus, \(\gamma_t(2^{f-1}\red\lambda) = (1 + \red\lambda t)^{2^{f-1}}\) and we find that \(\gamma_i(\red\eps) = c_i\red\eps\) for \(c_i := \tbinom{2^{f-1}}{i}2^{i-f}\).
  Note that \(c_i\) is indeed an integer: by Kummer's theorem on binomial coefficients, we find that the highest power of two dividing \(\binom{2^{f-1}}{i}\) is at least \(f-1-k\), where \(k\) is the highest power of two such that  \(2^k \leq i\).
  In fact, modulo two we have \(c_2 \equiv 1\) and \(c_i \equiv 0\) for all \(i > 2\).  So the \(\gamma\)-filtration is as described.
\end{proof}

Finally, here is the lemma referred to multiple times above.
\begin{lem}\label{lem:gamma-of-H-line}
  Let \(\sheaf L\) be a line bundle over a scheme \(X\) over \(\ZZ[\frac{1}{2}]\). Then
  \begin{align*}
    \gamma_2(H(\sheaf L - 1)) &= -H(\sheaf L -1)
  \end{align*}
  and \( \gamma_i(H(\sheaf L - 1)) = 0 \) in \(\GW(X)\) for all \(i > 2\).
\end{lem}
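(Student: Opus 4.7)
The plan is to compute $\gamma_t(y)$ directly for $y := H(\sheaf L - 1) \in \GW(X)$ and verify that it equals $1 + yt - yt^2$. First I would pin down the $\lambda$-series of $H(\sheaf L)$ and $\HH$. Since $H(\sheaf L) = \sheaf L \oplus \sheaf L^\dual$ has rank two, $\lambda^i H(\sheaf L) = 0$ for $i \geq 3$; the top exterior power has underlying line bundle $\sheaf L \otimes \sheaf L^\dual \cong \O$, and a local calculation (using the standard hyperbolic matrix $\mm{0 & 1 \\ 1 & 0}$, whose determinant is $-1$) shows that the induced symmetric form is $\langle -1 \rangle$. Specialising to $\sheaf L = \O$ and using the decomposition $\HH = \langle 1 \rangle + \langle -1 \rangle$ (valid since $2$ is invertible), this gives
\[ \lambda_t(H(\sheaf L)) = 1 + H(\sheaf L)\, t + \langle -1 \rangle t^2, \qquad \lambda_t(\HH) = (1+t)(1 + \langle -1 \rangle t). \]

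Next, multiplicativity of $\gamma_t$ for sums yields $\gamma_t(y) = \gamma_t(H(\sheaf L))/\gamma_t(\HH) = \left.\lambda_u(H(\sheaf L))/\lambda_u(\HH)\right|_{u = t/(1-t)}$. Numerator and denominator agree in the constant and quadratic coefficients in $u$ and differ by $y u$ in the linear term, so this collapses to $\gamma_t(y) = 1 + \left. y \cdot u/q(u)\right|_{u=t/(1-t)}$, where $q(u) := \lambda_u(\HH)$. A direct substitution, in which the identity $\HH = 1 + \langle -1 \rangle$ forces the coefficient of $t^2$ in the resulting numerator to vanish, gives $q(t/(1-t)) = (1 - z t)/(1-t)^2$ with $z := 1 - \langle -1 \rangle$. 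Hence
\[ \gamma_t(y) = 1 + \frac{y\, t (1-t)}{1 - z t}. \]

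The final ingredient is the annihilation $y z = 0$. This follows from the $\K(X)$-module structure of $\GW(X)$ via the hyperbolic map: for $\alpha \in \GW(X)$ and $x \in \K(X)$ one has $\alpha \cdot H(x) = H(F(\alpha) \cdot x)$. Applied with $\alpha = \langle -1 \rangle$ (whose forgetful image in $\K(X)$ is $1$) this yields $\langle -1 \rangle \cdot y = y$, and hence $y z = 0$. Expanding $(1 - z t)^{-1} = \sum_{n \geq 0} z^n t^n$, every term with $n \geq 1$ dies after multiplication by $y$, leaving $\gamma_t(y) = 1 + y t(1-t) = 1 + y t - y t^2$. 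Reading off the coefficients of $t^i$ produces the claimed identities $\gamma_2(H(\sheaf L-1)) = -H(\sheaf L-1)$ and $\gamma_i(H(\sheaf L-1)) = 0$ for $i > 2$. The only mildly delicate step is the sign check $\lambda^2 H(\sheaf L) = \langle -1 \rangle$ (rather than $\langle 1 \rangle$) in the opening computation; everything after that is formal manipulation inside the pre-$\lambda$-ring $\GW(X)$.
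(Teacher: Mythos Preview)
Your argument is correct and follows essentially the same route as the paper's proof: compute \(\lambda_t(H\sheaf L)\) and \(\lambda_t(\HH)\) (using \(\det H\sheaf L = \langle -1\rangle\) and \(\HH = 1 + \langle -1\rangle\)), pass to \(\gamma_t\) via \(t\mapsto t/(1-t)\), and then collapse a geometric series using the projection formula \(\alpha\cdot H(x)=H(F\alpha\cdot x)\). Your organisation is a bit tidier---you isolate the single identity \(y\,(1-\langle -1\rangle)=0\) and note early that numerator and denominator differ only by \(yu\)---whereas the paper carries along the full numerator and uses \((2-H1)^i = 2^{i-1}(2-H1)\) before simplifying; but the ingredients and the logic are the same.
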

\begin{proof}
Let us write \(\lambda_t(x) = 1 + xt + \lambda^2(x)t^2 + \dots \) for the total \(\lambda\)-operation, and similarly for \(\gamma_t(x)\).  Then \(\lambda_t(x+y) = \lambda_t(x)\lambda_t(y)\), \(\gamma_t(x+y)=\gamma_t(x)\gamma_t(y)\), and \(\gamma_t(x) = \lambda_{\frac{t}{1-t}}(x)\). Let \(a:= H(\sheaf L - 1)\). From
  \begin{align*}
    \lambda_t(a)
    &= \frac{\lambda_t(H\sheaf L)}{\lambda_t (H1)}
    = \frac{1 + (H\sheaf L)t + \det(H\sheaf L)t^2}{1 + (H1)t + \det(H1)t^2}
    = \frac{1 + (H\sheaf L)t + \langle -1 \rangle t^2}{1 + (H1)t + {\langle -1 \rangle} t^2}
  \intertext{we deduce that}
    \gamma_t(a)
    &= \frac{1 + (H\sheaf L - 2)t + (1 + {\langle -1 \rangle} - H\sheaf L)t^2}{1 + (H1 - 2)t + (1 + {\langle -1 \rangle} - H1)t^2} \\
    &= \frac{1 + (H\sheaf L - 2)t - H(\sheaf L - 1)t^2}{1 + (H1-2)t}\\
    &= [1 + (H\sheaf L - 2)t - H(\sheaf L - 1)t^2] \cdot \sum_{i\geq 0}(2 - H1)^i t^i.
  \end{align*}
  Here, the penultimate step uses that \(H1 \cong 1 + {\langle -1 \rangle}\) when two is invertible.

  In order to proceed, we observe that \(H1\cdot Hx = H(FH1\cdot x) = 2 Hx\) for any \(x\in \GW(X)\).
  It follows that
  \(
    (2 - H1)^i = 2^{i-1} (2 - H1)
  \)
  and hence that
  \[
  [1 + (H\sheaf L - 2)t - H(\sheaf L - 1)t^2] \cdot (2 - H1)^it^i
  = 2^{i-1}(2-H1) (1-2t)t^i
  \]
  for all \(i \geq 1\).
  This implies that the above expression for \(\gamma_t(a)\) simplifies to \(1 + H(\sheaf L - 1)t - H(\sheaf L -1)t^2\), as claimed.
\end{proof}

\begin{bibdiv}
  \renewcommand*{\MR}[1]{\href{http://www.ams.org/mathscinet-getitem?mr=#1}{\tiny{\sffamily ~~[MR#1]}}}
  \newcommand*{\arxiv}[1]{\href{http://arxiv.org/abs/#1}{arXiv:#1}}
  \begin{biblist}
    \bib{Adams:Spheres}{article}{
      author={Adams, J. F.},
      title={Vector fields on spheres},
      journal={Ann. of Math. (2)},
      volume={75},
      date={1962},
      pages={603--632},
      issn={0003-486X},
      review={\MR{0139178}},
    }
    \bib{AtiyahTall}{article}{
      author={Atiyah, Michael F.},
      author={Tall, David O.},
      title={Group representations, $\lambda $-rings and the $J$-homomorphism},
      journal={Topology},
      volume={8},
      date={1969},
      pages={253--297},
      review={\MR{0244387}},
    }
    \bib{Auel:Milnor}{article}{
      author={Auel, Asher},
      title={Remarks on the Milnor conjecture over schemes},
      conference={
        title={Galois-Teichm\"uller theory and arithmetic geometry},
      },
      book={
        series={Adv. Stud. Pure Math.},
        volume={63},
        publisher={Math. Soc. Japan, Tokyo},
      },
      date={2012},
      pages={1--30},
      review={\MR{3051237}},
    }
    \bib{Balmer:Nilpotence}{article}{
      author={Balmer, Paul},
      title={Vanishing and nilpotence of locally trivial symmetric spaces over
        regular schemes},
      journal={Comment. Math. Helv.},
      volume={78},
      date={2003},
      number={1},
      pages={101--115},
      issn={0010-2571},
      review={\MR{1966753}},
    }
    \bib{Balmer:Handbook}{article}{
      author={Balmer, Paul},
      title={Witt groups},
      conference={
        title={Handbook of $K$-theory. Vol. 1, 2},
      },
      book={
        publisher={Springer},
        place={Berlin},
      },
      date={2005},
      pages={539--576},
      review={\MR{2181829}},
    }
     \bib{BalmerGille}{article}{
      author={Balmer, Paul},
      author={Gille, Stefan},
      title={Koszul complexes and symmetric forms over the punctured affine
        space},
      journal={Proc. London Math. Soc. (3)},
      volume={91},
      date={2005},
      number={2},
      pages={273--299},
      review={\MR{2167088}},
    }
    \bib{BalmerWalter}{article}{
      author={Balmer, Paul},
      author={Walter, Charles},
      title={A Gersten-Witt spectral sequence for regular schemes},
      language={English, with English and French summaries},
      journal={Ann. Sci. \'Ecole Norm. Sup. (4)},
      volume={35},
      date={2002},
      number={1},
      pages={127--152},
      review={\MR{1886007}},
    }
    \bib{Baeza}{book}{
      author={Baeza, Ricardo},
      title={Quadratic forms over semilocal rings},
      series={Lecture Notes in Mathematics, Vol. 655},
      publisher={Springer-Verlag},
      place={Berlin},
      date={1978},
      pages={vi+199},
      review={\MR{0491773}},
    }
     \bib{Borger:BasicI}{article}{
      author={Borger, James},
      title={The basic geometry of Witt vectors, I: The affine case},
      journal={Algebra Number Theory},
      volume={5},
      date={2011},
      number={2},
      pages={231--285},
      issn={1937-0652},
      review={\MR{2833791}},
      doi={10.2140/ant.2011.5.231},
    }
    \bib{Borger:Positivity}{article}{
      author={Borger,James},
      title={Witt vectors, semirings, and total positivity},
      date={2013},
      note={\arxiv{1310.3013}},
    }
    \bib{Bourbaki:Algebre}{book}{
      author={Bourbaki, N.},
      title={\'El\'ements de math\'ematique. Alg\`ebre. Chapitres 1 \`a 3},
      publisher={Hermann},
      place={Paris},
      date={1970},
      review={\MR{0274237}},
    }
    \bib{Clauwens}{article}{
      author={Clauwens, Franciscus Johannes Baptist Jozef},
      title={The nilpotence degree of torsion elements in lambda-rings},
      note={\arxiv{1004.0829}},
      date={2010},
    }
    \bib{Eisenbud}{book}{
      author={Eisenbud, David},
      title={Commutative algebra},
      series={Graduate Texts in Mathematics},
      volume={150},
      publisher={Springer-Verlag},
      place={New York},
      date={1995},
      pages={xvi+785},
      review={\MR{1322960}},
    }
    \bib{EKV}{article}{
      author={Esnault, H{\'e}l{\`e}ne},
      author={Kahn, Bruno},
      author={Viehweg, Eckart},
      title={Coverings with odd ramification and Stiefel-Whitney classes},
      journal={J. Reine Angew. Math.},
      volume={441},
      date={1993},
      pages={145--188},
    }
    \bib{Fernandez}{article}{
      author={Fern{\'a}ndez-Carmena, Fernando},
      title={The Witt group of a smooth complex surface},
      journal={Math. Ann.},
      volume={277},
      date={1987},
      number={3},
      pages={469--481},
    }
    \bib{Fulton:Intersection}{book}{
      author={Fulton, William},
      title={Intersection theory},
      series={Ergebnisse der Mathematik und ihrer Grenzgebiete. 3. Folge.
        A Series of Modern Surveys in Mathematics},
      volume={2},
      edition={2},
      publisher={Springer-Verlag},
      place={Berlin},
      date={1998},
      review={\MR{1644323}},
    }
    \bib{FultonLang}{book}{
      author={Fulton, William},
      author={Lang, Serge},
      title={Riemann-Roch algebra},
      series={Grundlehren der Mathematischen Wissenschaften},
      volume={277},
      publisher={Springer-Verlag},
      place={New York},
      date={1985},
      pages={x+203},
      review={\MR{801033}},
    }
    \bib{FunkHoobler}{article}{
      author={Funk, Jeanne M.},
      author={Hoobler, Raymond T.},
      title={The Witt ring of a curve with good reduction over a non-dyadic local field},
      journal={J. Algebra},
      volume={422},
      date={2015},
      pages={648--659},
      review={\MR{3272094}},
    }
    \bib{Gille:HomotopyInvariance}{article}{
      author={Gille, Stefan},
      title={Homotopy invariance of coherent Witt groups},
      journal={Math. Z.},
      volume={244},
      date={2003},
      number={2},
      pages={211--233},
      issn={0025-5874},
      review={\MR{1992537}},
    }
    \bib{GuillotMinac}{article}{
      author={Guillot, Pierre},
      author={Min{\'a}{\v{c}}, J{\'a}n},
      title={Milnor $K$-theory and the graded representation ring},
      journal={J. K-Theory},
      volume={13},
      date={2014},
      number={3},
      pages={447--480},
    }
    \bib{Hesselholt:Big}{article}{
      author={Hesselholt, Lars},
      title={The big de Rham-Witt complex},
      date={2004},
      note={\arxiv{1006.3125v2}},
    }
    \bib{Hornbostel:Representability}{article}{
      author={Hornbostel, Jens},
      title={$A^1$-representability of Hermitian $K$-theory and Witt groups},
      journal={Topology},
      volume={44},
      date={2005},
      number={3},
      pages={661--687},
      issn={0040-9383},
      review={\MR{2122220}},
    }
    \bib{Hornbostel:nilpotence}{article}{
      author={Hornbostel,Jens},
      title={Some comments on motivic nilpotence},
      date={2016},
      note={\arxiv{1511.07292}},
    }
    \bib{KerzMS}{article}{
      author={Kerz, Moritz},
      author={M{\"u}ller-Stach, Stefan},
      title={The Milnor-Chow homomorphism revisited},
      journal={$K$-Theory},
      volume={38},
      date={2007},
      number={1},
      pages={49--58},
      issn={0920-3036},
      review={\MR{2353863}},
    }
    \bib{KRW72}{article}{
      author={Knebusch, Manfred},
      author={Rosenberg, Alex},
      author={Ware, Roger},
      title={Structure of Witt rings and quotients of Abelian group rings},
      journal={Amer. J. Math.},
      volume={94},
      date={1972},
      pages={119--155},
      issn={0002-9327},
      review={\MR{0296103}},
    }
    \bib{Lam}{book}{
      author={Lam, T. Y.},
      title={Introduction to quadratic forms over fields},
      series={Graduate Studies in Mathematics},
      volume={67},
      publisher={American Mathematical Society, Providence, RI},
      date={2005},
      pages={xxii+550},
      isbn={0-8218-1095-2},
      review={\MR{2104929}},
    }
    \bib{McGarraghy:exterior}{article}{
      author={McGarraghy, Se{\'a}n},
      title={Exterior powers of symmetric bilinear forms},
      journal={Algebra Colloq.},
      volume={9},
      date={2002},
      number={2},
      pages={197--218},
      issn={1005-3867},
      review={\MR{1901274}},
    }
     \bib{Milnor}{article}{
      author={Milnor, John},
      title={Algebraic $K$-theory and quadratic forms},
      journal={Invent. Math.},
      volume={9},
      date={1969/1970},
      pages={318--344},
      issn={0020-9910},
      review={\MR{0260844}},
    }
    \bib{Ojanguren}{article}{
      author={Ojanguren, Manuel},
      title={Quadratic forms over regular rings},
      journal={J. Indian Math. Soc. (N.S.)},
      volume={44},
      date={1980},
      number={1-4},
      pages={109--116 (1982)},
      issn={0019-5839},
      review={\MR{752647}},
    }
    \bib{OjangurenPanin}{article}{
      author={Ojanguren, Manuel},
      author={Panin, Ivan},
      title={A purity theorem for the Witt group},
      language={English, with English and French summaries},
      journal={Ann. Sci. \'Ecole Norm. Sup. (4)},
      volume={32},
      date={1999},
      number={1},
      pages={71--86},
      issn={0012-9593},
      review={\MR{1670591}},
      doi={10.1016/S0012-9593(99)80009-3},
    }
    \bib{OVV:Milnor}{article}{
      author={Orlov, D.},
      author={Vishik, A.},
      author={Voevodsky, V.},
      title={An exact sequence for $K^M_\ast/2$ with applications to
        quadratic forms},
      journal={Ann. of Math. (2)},
      volume={165},
      date={2007},
      number={1},
      pages={1--13},
      issn={0003-486X},
      review={\MR{2276765}},
      doi={10.4007/annals.2007.165.1},
    }
    \bib{Sanderson}{article}{
      author={Sanderson, B. J.},
      title={Immersions and embeddings of projective spaces},
      journal={Proc. London Math. Soc. (3)},
      volume={14},
      date={1964},
      pages={137--153},
      issn={0024-6115},
      review={\MR{0165532}},
    }
    \bib{SGA6}{book}{
      label={SGA6},
      title={Th\'eorie des intersections et th\'eor\`eme de Riemann-Roch},
      series={Lecture Notes in Mathematics, Vol. 225},
      note={S\'eminaire de G\'eom\'etrie Alg\'ebrique du Bois-Marie 1966--1967
        (SGA 6)
      },
      publisher={Springer-Verlag},
      place={Berlin},
      date={1971},
      pages={xii+700},
      review={\MR{0354655}},
    }
    \bib{Serre}{article}{
      author={Serre, Jean-Pierre},
      title={Groupes de Grothendieck des sch\'emas en groupes r\'eductifs d\'eploy\'es},
      journal={Inst. Hautes \'Etudes Sci. Publ. Math.},
      number={34},
      date={1968},
      pages={37--52},
      issn={0073-8301},
      review={\MR{0231831}},
    }
    \bib{Totaro:Witt}{article}{
      author={Totaro, Burt},
      title={Non-injectivity of the map from the Witt group of a variety to the Witt group of its function field},
      journal={J. Inst. Math. Jussieu},
      volume={2},
      date={2003},
      number={3},
      pages={483--493},
      issn={1474-7480},
    }
    \bib{Voevodsky:Milnor}{article}{
      author={Voevodsky, Vladimir},
      title={Motivic cohomology with ${\bf Z}/2$-coefficients},
      journal={Publ. Math. Inst. Hautes \'Etudes Sci.},
      number={98},
      date={2003},
      pages={59--104},
      issn={0073-8301},
      review={\MR{2031199}},
    }
    \bib{Walter:PB}{article}{
      author={Walter, Charles},
      title={Grothendieck-Witt groups of projective bundles},
      note={Preprint},
      eprint={www.math.uiuc.edu/K-theory/0644/},
      date={2003},
    }
    \bib{Weibel:KH}{article}{
      author={Weibel, Charles A.},
      title={Homotopy algebraic $K$-theory},
      conference={
        title={Algebraic $K$-theory and algebraic number theory (Honolulu, HI,
          1987)},
      },
      book={
        series={Contemp. Math.},
        volume={83},
        publisher={Amer. Math. Soc., Providence, RI},
      },
      date={1989},
      pages={461--488},
      review={\MR{991991}},
    }
    \bib{Xie}{article}{
      author={Xie, Heng},
      title={An application of Hermitian $K$-theory: sums-of-squares formulas},
      journal={Doc. Math.},
      volume={19},
      date={2014},
      pages={195--208},
      issn={1431-0635},
      review={\MR{3178250}},
    }
    \bib{Me:WCCV}{article}{
      author={Zibrowius, Marcus},
      title={Witt groups of complex cellular varieties},
      journal={Documenta Math.},
      number={16},
      date={2011},
      pages={465--511},
      issn={1431-0635},
      review={\MR{2823367}},
    }
    \bib{Me:WCS}{article}{
      author={Zibrowius, Marcus},
      title={Witt groups of curves and surfaces},
      journal={Math. Zeits.},
      volume={278},
      number={1--2},
      date={2014},
      pages={191--227},
      issn={0025-5874},
      review={\MR{3267576}},
    }
    \bib{Me:LambdaReps}{article}{
      author={Zibrowius, Marcus},
      title={Symmetric representation rings are $\lambda$-rings},
      journal={New York J. Math.},
      volume={21},
      date={2015},
      pages={1055--1092},
      issn={1076-9803},
      review={\MR{3425635}},
    }
    \bib{Me:App}{article}{
      author={Zibrowius, Marcus},
      title={Nilpotence in Milnor-Witt K-Theory},
      note={Appendix to \cite{Hornbostel:nilpotence}},
      date={2016},
    }
  \end{biblist}
\end{bibdiv}
\end{document}